\documentclass[10pt]{amsart}
\usepackage{amssymb, amsthm, amsmath}
\usepackage[all]{xy}
\usepackage{graphicx}
\usepackage{psfrag}

\newtheorem{prop}{Proposition}
\newtheorem{thm}{Theorem}
\newtheorem{cor}{Corollary}
\newtheorem{lemma}{Lemma}
\theoremstyle{definition}

\newtheorem{defn}{Definition}
\newtheorem{example}{Example}
\newtheorem{remark}{Remark}

\newcommand\A{{\mathcal A}}

\newcommand{\M}{\mathcal M}
\newcommand\C{{\mathbb C}}
\newcommand\Q{{\mathbb Q}}
\newcommand\N{{\mathbb N}}

\newcommand{\ti}{\vartheta}

\newcommand{\Ti}{\Theta}

\newcommand{\om}{{\varpi}}

\newcommand\cC{{\mathcal C}}

\newcommand\Z{{\mathbb Z}}
\newcommand\cP{{\mathcal P}}

\newcommand\AS{{\mathfrak S}}
\newcommand\BS{{\mathfrak B}}
\newcommand\CS{{\mathfrak C}}
\newcommand\DS{{\mathfrak D}}

\newcommand\IGam{{\mathrm I}\Gamma}
\newcommand\ILam{{\mathrm I}\Lambda}
\newcommand\IB{{\mathrm I}B}
\newcommand\fraka{{\mathfrak a}}

\newcommand\RR{R^{\infty}}
\newcommand\al{\alpha}
\newcommand{\be}{\beta}
\newcommand\la{\lambda}

\newcommand\supp{{\mathrm{supp}}}
\newcommand\Eta{H}

\newcommand\ssm{\smallsetminus}
\newcommand\noin{\noindent}

\newcommand\bull{{\scriptscriptstyle \bullet}}
\newcommand\eqto{\stackrel{\lower1.5pt\hbox{$\scriptstyle\sim\,$}}\to}
\newcommand\ov{\overline}
\newcommand\hra{\hookrightarrow}
\newcommand\wh{\widehat}
\newcommand\wt{\widetilde}

\DeclareMathOperator{\Ker}{Ker}

\DeclareMathOperator{\Sp}{Sp}
\DeclareMathOperator{\SO}{SO}

\DeclareMathOperator{\GL}{GL}

\DeclareMathOperator{\HH}{\mathrm{H}}

\begin{document}

\title[Schubert polynomials, theta and eta polynomials, and $W$-invariants]
{Schubert polynomials, theta and eta polynomials, and Weyl group invariants}

\date{September 13, 2019}

\author{Harry~Tamvakis} 
\address{University of Maryland, Department of
Mathematics, William E. Kirwan Hall, 4176 Campus Drive, 
College Park, MD 20742, USA}
\email{harryt@math.umd.edu}

\dedicatory{In memory of Alain Lascoux}

\subjclass[2010]{Primary 14M15; Secondary 05E05, 13A50, 14N15}

\keywords{Schubert polynomials, theta and eta polynomials, Weyl group
invariants, flag manifolds, equivariant cohomology}

\thanks{The author was supported in part by NSF Grant DMS-1303352.}

\begin{abstract}
We examine the relationship between the (double) Schubert polynomials
of Billey-Haiman and Ikeda-Mihalcea-Naruse and the (double) theta and
eta polynomials of Buch-Kresch-Tamvakis and Wilson from the
perspective of Weyl group invariants. We obtain generators for the
kernel of the natural map from the corresponding ring of Schubert
polynomials to the (equivariant) cohomology ring of symplectic and
orthogonal flag manifolds.
\end{abstract}

\maketitle

\section{Introduction}

The theory of Schubert polynomials due to Lascoux and Sch\"utzenberger
\cite{LS1} provides a canonical set of polynomial representatives for
the Schubert classes on complete type A flag manifolds. The classical
Schur polynomials are identified with the Schubert polynomials
representing the classes which pull back from Grassmannians. There are
natural analogues of these objects for the symplectic and orthogonal
groups: the Schubert polynomials of Billey and Haiman \cite{BH}, and
the theta and eta polynomials of Buch, Kresch, and the author
\cite{BKT1, BKT2}, respectively. We also have `double' versions of 
the aforementioned polynomials, which represent the Schubert classes
in the torus-equivariant cohomology ring, and in the setting of
degeneracy loci of vector bundles (see \cite{L2, F1, IMN1} and
\cite{KL, L1, W, IM, TW, T6}, respectively).

The goal of this work is to study the relation between these two
families of polynomials from the point of view of Weyl group
symmetries, following the program set out in \cite{LS1, LS2, M2} in
Lie type A.  The key observation is that the theta and eta polynomials
of a fixed level $n$ form a basis of the Weyl group invariants in the
associated ring of Schubert polynomials (Propositions \ref{invlem} and
\ref{invlemD}).  In this introduction, for simplicity, we review the
story in type A, and describe its analogue in type C, in the case of
`single' polynomials, leaving the extensions to the `double' case and
the orthogonal Lie types B and D to the main body of the paper.

Let $S_\infty:=\cup_kS_k$ be the group of permutations of the positive
integers which leave all but a finite number of them fixed. For any
$n\geq 1$, let $S^{(n)}$ denote the set of those permutations
$\om=(\om_1,\om_2,\ldots)$ in $S_\infty$ such that $\om_{n+1} <
\om_{n+2} < \cdots$.  If $X_n:=(x_1,\ldots,x_n)$ is a family of $n$
commuting independent variables, then the single Schubert polynomials
$\AS_\om$ of Lascoux and Sch\"utzenberger \cite{LS1}, as $\om$ ranges
over $S^{(n)}$, form a $\Z$-basis of the polynomial ring $\Z[X_n]$.
If $M_n:=\GL_n/B$ denotes the complete type A flag manifold over $\C$,
then there is a surjective ring homomorphism $\rho_n:\Z[X_n]\to
\HH^*(M_n)$ which maps the polynomial $\AS_\om$ to the cohomology
class $[X_\om]$ of a codimension $\ell(\om)$ Schubert variety $X_\om$
in $M_n$, if $\om\in S_n$, and to zero, otherwise.

The Weyl group $S_n$ acts on
$\Z[X_n]$ by permuting the variables, and the subring 
$\Z[X_n]^{S_n}$ of $S_n$-invariants is the ring $\Lambda_n$ of 
symmetric polynomials in $x_1,\ldots,x_n$. We have $\Lambda_n=
\Z[e_1(X_n),\ldots,e_n(X_n)]$, where $e_i(X_n)$ denotes the $i$-th
elementary symmetric polynomial. The kernel of $\rho_n$
is the ideal $\ILam_n$ of $\Z[X_n]$ generated by the homogeneous
elements of positive degree in $\Lambda_n$. We therefore have
\[
\ILam_n=\bigoplus_{\om\in S^{(n)}\ssm S_n} \Z\,\AS_\om = 
\langle e_1(X_n),\ldots,e_n(X_n)\rangle
\]
and recover the Borel presentation \cite{Bo} of the cohomology ring
\[
\HH^*(\GL_n/B) \cong \Z[X_n]/\ILam_n.
\]

Any Schubert polynomial $\AS_\om$ which lies in $\Lambda_n$ is equal
to a Schur polynomial $s_\la(X_n)$ indexed by a partition
$\la=(\la_1,\ldots,\la_n)$ associated to the Grassmannian permutation
$\om$, and these elements form a $\Z$-basis of $\Lambda_n$. One knows
that
\begin{equation}
\label{StoS}
s_\la(X_n) 
= \left. A(x^{\la+\delta_{n-1}})\right\slash A(x^{\delta_{n-1}})
\end{equation}
where $A:= \sum_{\om\in S_n}(-1)^{\ell(\om)}\om$ is the alternating
operator on $\Z[X_n]$, $x^\al$ denotes
$x_1^{\al_1}\cdots x_n^{\al_n}$ for any integer vector exponent
$\al$, and $\delta_k:=(k,\ldots,1,0)$ for every $k\geq 0$. Equation
(\ref{StoS}) may be identified with the Weyl character formula for
$\GL_n$. Alternatively, one has the (dual) Jacobi-Trudi identity
\begin{equation}
\label{JT}
s_\la(X_n) = \det(e_{\la'_i+j-i}(X_n))_{i,j} = 
\prod_{i<j} (1-R_{ij}) \,e_{\la'}(X_n),
\end{equation}
where $\la'$ is the conjugate partition of $\la$,
$e_{\nu} := e_{\nu_1}e_{\nu_2}\cdots$ for any integer vector $\nu$,
and the $R_{ij}$ are Young's raising operators, with 
$R_{ij}e_\nu:=e_{R_{ij}\nu}$ (see \cite[I.3]{M1}).

If $\om_0$ denotes the longest permutation in $S_n$, then the divided
difference operator $\partial_{\om_0}$ gives a $\Lambda_n$-linear map
$\Z[X_n]\to \Lambda_n$, and the equation $\langle f,g\rangle =
\partial_{\om_0}(fg)$ defines a scalar product on $\Z[X_n]$, with
values in $\Lambda_n$. The Schubert polynomials $\{\AS_\om\}_{\om\in
S_n}$ form a basis for $\Z[X_n]$ as a $\Lambda_n$-module, and
satisfy an orthogonality property under this product, which
corresponds to the natural duality pairing on $\HH^*(M_n)$.

The above narrative admits an exact analogue for the symplectic
group.  Let $c:=(c_1,c_2,\ldots)$ be a sequence of commuting
variables, and set $c_0:=1$ and $c_p=0$ for $p<0$. Consider the graded
ring $\Gamma$ which is the quotient of the polynomial ring $\Z[c]$
modulo the ideal generated by the relations
\begin{equation}
\label{basicrels}
c_p^2+2\sum_{i=1}^p(-1)^ic_{p+i}c_{p-i}=0, \ \ \ \text{for all $p\geq 1$}.
\end{equation}
The ring $\Gamma$ is isomorphic to the ring of Schur $Q$-functions
\cite[III.8]{M1} and to the stable cohomology ring of the Lagrangian
Grassmannian, following \cite{P, J}.

Let $W_k$ denote the hyperoctahedral group of signed permutations on
the set $\{1,\ldots,k\}$. For each $k\geq 1$, we embed $W_k$ in
$W_{k+1}$ by adjoining the fixed point $k+1$, and set $W_\infty
:=\cup_k W_k$. For any $n\geq 0$, let $W^{(n)}$ denote the set of
those elements $w=(w_1,w_2,\ldots)$ in $W_\infty$ such that $w_{n+1} <
w_{n+2} < \cdots$. The type C single Schubert polynomials $\CS_w$ of
Billey and Haiman \cite{BH}, as $w$ ranges over $W^{(n)}$, form a
$\Z$-basis of the ring $\Gamma[X_n]$ (Proposition 
\ref{basisprop})\footnote{The Billey-Haiman Schubert polynomials are actually 
power series; see Section \ref{Cprelims}}. If
$\M_n:=\Sp_{2n}/B$ denotes the complete symplectic flag manifold over
$\C$, then there is a surjective ring homomorphism
$\pi_n:\Gamma[X_n]\to \HH^*(\M_n)$ which maps the polynomial $\CS_w$
to the class $[X_w]$ of a codimension $\ell(w)$ Schubert variety $X_w$
in $\M_n$, if $w\in W_n$, and to zero, otherwise.

There is a natural action of the Weyl group $W_n$ on $\Gamma[X_n]$
which extends the $S_n$ action on $\Z[X_n]$ (see \S \ref{Cprelims}).  
The subring $\Gamma[X_n]^{W_n}$ of $W_n$-invariants is
the ring $\Gamma^{(n)}$ of {\em theta polynomials of level $n$}
(Proposition \ref{invlem}). The ring $\Gamma^{(n)}$ was defined in
\cite[\S 5.1]{BKT1} as $\Gamma^{(n)} := \Z[{}^nc_1, {}^nc_2, \ldots]$,
where
\[
{}^nc_p:=\sum_{j=0}^p c_{p-j}e_j(X_n), \ \ \ \text{for $p\geq 1$}.
\]
The kernel of $\pi_n$ is the ideal $\IGam^{(n)}$ of $\Gamma[X_n]$
generated by the homogeneous elements of positive degree in $\Gamma^{(n)}$.
We therefore have
\begin{equation}
\label{IGtoc}
\IGam^{(n)}=\bigoplus_{w\in W^{(n)}\ssm W_n} \Z\,\CS_w =
\langle {}^nc_1, {}^nc_2, \ldots\rangle
\end{equation}
and obtain (Corollary \ref{prescor}) a canonical isomorphism
\[
\HH^*(\Sp_{2n}/B) \cong \Gamma[X_n]/\IGam^{(n)}.
\]

Following \cite{BKT1}, any Schubert polynomial $\CS_w$ which lies in
$\Gamma^{(n)}$ is equal to a {\em theta polynomial} ${}^n\Ti_\la$
indexed by an $n$-strict partition $\la=\la(w)$ associated to the
$n$-Grassmannian element $w$, and these polynomials form a $\Z$-basis
of $\Gamma^{(n)}$. The polynomial ${}^n\Ti_\la$ was defined in
\cite{BKT1} using the raising operator formula
\begin{equation}
\label{raiseC}
{}^n\Ti_\la := \prod_{i<j} (1-R_{ij}) \prod_{(i,j)\in \cC(\la)}
(1+R_{ij})^{-1}\, ({}^nc)_\la,
\end{equation}
where $({}^nc)_\nu:={}^nc_{\nu_1}{}^nc_{\nu_2}\cdots$ and $\cC(\la)$
denotes the set of pairs $(i,j)$ with $i<j$ and
$\la_i+\la_j>2n+j-i$. This is the symplectic version of equation
(\ref{JT}).

There is also a symplectic analogue of formula (\ref{StoS}). Let $w_0$
denote the longest element of $W_n$, define $\wh{w}:= ww_0$ and
consider the {\em multi-Schur Pfaffian}
\begin{equation}
\label{SchPf}
{}^{\nu(\wh{w})}Q_{\la(\wh{w})} := \prod_{i<j} \frac{1-R_{ij}}{1+R_{ij}}
\,{}^{\nu(\wh{w})}c_{\la(\wh{w})}
\end{equation}
where $\nu(\wh{w})$ and $\la(\wh{w})$ are certain partitions
associated to $\wh{w}$ (see (\ref{Qdef}) and Definition
\ref{codedef}). We then have (Theorem \ref{altCthm})
\begin{align}
\label{TtoQ}
{}^n\Ti_{\la(w)} 
=(-1)^{n(n+1)/2}\left. \A\left({}^{\nu(\wh{w})}Q_{\la(\wh{w})}\right)\right\slash
\A\left(x^{\delta_n+\delta_{n-1}}\right)
\end{align}
where $\A:= \sum_{w\in W_n}(-1)^{\ell(w)}w$ is the alternating
operator on $\Gamma[X_n]$. In the special case when $w\in S_\infty$,
with $\la=\la(w)$, equation (\ref{TtoQ}) becomes
\[
{}^n\Ti_\la =
(-1)^{n(n+1)/2}\left. \A\left({}^{\delta_{n-1}}Q_{\delta_n+\delta_{n-1}+\la'}
\right)\right\slash \A\left(x^{\delta_n+\delta_{n-1}}\right).
\]

The maximal divided difference operator $\partial_{w_0}$ gives a
$\Gamma^{(n)}$-linear map $\Gamma[X_n]\to \Gamma^{(n)}$, and the
equation $\langle f,g\rangle = \partial_{w_0}(fg)$ defines a scalar
product on $\Gamma[X_n]$, with values in $\Gamma^{(n)}$. The Schubert
polynomials $\{\CS_w\}_{w\in W_n}$ form a basis for $\Gamma[X_n]$ as a
$\Gamma^{(n)}$-module (Corollary \ref{basiscor}), and satisfy an
orthogonality property under this product (Proposition
\ref{prodprop}), which corresponds to the duality pairing on
$\HH^*(\M_n)$. A similar scalar product in the finite case 
was introduced and studied in \cite{LP1}.

As mentioned earlier, we provide analogues of most of the above facts
for the double Schubert, theta, and eta polynomials. Our main new
results (Theorems \ref{Cthm} and \ref{Dthm}) are the double versions
of equation (\ref{IGtoc}), which exhibit natural generators for the
kernel of the {\em geometrization map} of \cite[\S 10]{IMN1} from the
(stable) ring of double Schubert polynomials to the equivariant
cohomology ring of the corresponding (finite dimensional) symplectic
or orthogonal flag manifold. This is done by using an idea from
\cite[Lemma 1]{T1} together with the transition equations of \cite{B,
  IMN1} to write the Schubert polynomials in this kernel as an
explicit linear combination of these generators, which is important in
applications. The double versions of formula (\ref{TtoQ}) rely on the
equality of the multi-Schur Pfaffian (\ref{SchPf}) -- and its
orthogonal analogue -- with certain double Schubert polynomials
(Propositions \ref{PfCprop} and \ref{PfDprop}). This latter fact is
an extension of \cite[Thm.\ 1.2]{IMN1}, which may be deduced from the
(even more general) Pfaffian formulas of Anderson and Fulton
\cite{AF}. We give an independent treatment here, using the right
divided difference operators.

This paper is organized as follows. In Section \ref{dspC} we recall
the type C double Schubert polynomials and the geometrization map
$\pi_n$ from $\Gamma[X_n,Y_n]$ to the equivariant cohomology ring of
$\Sp_{2n}/B$, and obtain canonical generators for the kernel of
$\pi_n$. In Section \ref{ddstheta} we define the statistics $\nu(w)$
and $\la(w)$ of a signed permutation $w$ and prove the analogue of
formula (\ref{TtoQ}) for double theta polynomials. Section \ref{sspC}
examines some related facts about single type C Schubert polynomials,
including the scalar product with values in the ring $\Gamma^{(n)}$ of
$W_n$-invariants. Sections \ref{dspBD}, \ref{ddsdbleta} and \ref{sspD}
study the corresponding questions in the orthogonal Lie types B and D.

I dedicate this article to the memory of Alain Lascoux, whose warm
personality and vision about symmetric functions and Schubert
polynomials initially assisted, and subsequently inspired my research,
from its beginning to the present day.

\section{Double Schubert polynomials of type C}
\label{dspC}

\subsection{Preliminaries}
\label{Cprelims}
We recall the type C double Schubert polynomials of Ikeda, Mihalcea,
and Naruse \cite{IMN1}, employing the notational conventions of the
introduction, which are similar to those used in \cite{AF}. These
differ from the Schubert polynomials found in \cite{BH,IMN1} and our
papers \cite{T4, T5, T7}, in that the ring $\Gamma$ is realized using
the generators $c_p$ and relations (\ref{basicrels}) among them,
instead of the formal power series known as Schur $Q$-functions, which
are not required in the present work.  The connection between these
power series and the Schubert polynomials used here was first
explained in \cite{T2, T3} (in the case of single polynomials) and
\cite{IMN1, T4} (for their double versions). We refer to \cite[Section
  7.3]{T5} and \cite[Section 5]{T7} for a detailed account of this
history.

Let $X:=(x_1,x_2,\ldots)$ and $Y:=(y_1,y_2,\ldots)$ be two lists of
commuting independent variables, and set $X_n:=(x_1,\ldots,x_n)$ and
$Y_n:=(y_1,\ldots,y_n)$ for each $n\geq 1$. The Weyl group for the
root system of type $\text{C}_n$ is the group of signed permutations
on the set $\{1,\ldots,n\}$, denoted $W_n$. We write the elements of
$W_n$ as $n$-tuples $(w_1,\ldots,w_n)$, where $w_i:=w(i)$ for $1\leq i
\leq n$.  The group $W_\infty=\cup_k W_k$ is generated by the simple
transpositions $s_i=(i,i+1)$ for $i\geq 1$ together with the sign
change $s_0$, which fixes all $j\geq 2$ and sends $1$ to $\ov{1}$ (a
bar over an integer here means a negative sign). For $w\in W_\infty$,
we denote by $\ell(w)$ the {\em length} of $w$, which is the least
integer $\ell$ such that we can write $w=s_{i_1}\cdots s_{i_\ell}$ for
some indices $i_j\geq 0$.

There is an action of $W_\infty$ on $\Gamma[X,Y]$ by ring
automorphisms, defined as follows. The simple reflections $s_i$ for $i\geq 1$ 
act by interchanging $x_i$ and $x_{i+1}$ while leaving all the remaining
variables fixed. The reflection $s_0$ maps $x_1$
to $-x_1$, fixes the $x_j$ for $j\geq 2$ and all the $y_j$, and
satisfies
\begin{equation}
\label{equiva}
s_0(c_p) := c_p+2\sum_{j=1}^p x_1^jc_{p-j} \ \ \text{for all $p\geq 1$}.
\end{equation}

For each $i\geq 0$, define the {\em divided difference operator}
$\partial_i^x$ on $\Gamma[X,Y]$ by
\[
\partial_0^xf := \frac{f-s_0f}{-2x_1}, \qquad
\partial_i^xf := \frac{f-s_if}{x_i-x_{i+1}} \ \ \ \text{for $i\geq 1$}.
\]
Consider the ring involution $\omega:\Gamma[X,Y]\to\Gamma[X,Y]$
determined by
\[
\omega(x_j) = -y_j, \qquad
\omega(y_j) = -x_j, \qquad
\omega(c_p)=c_p
\]
and set $\partial_i^y:=\omega\partial_i^x\omega$ for each $i\geq 0$. 

The double Schubert polynomials $\CS_w=\CS_w(X,Y)$ for $w\in
W_{\infty}$ are the unique family of elements of $\Gamma[X,Y]$ such
that
\begin{equation}
\label{ddCeqinit}
\partial_i^x\CS_w = \begin{cases}
\CS_{ws_i} & \text{if $\ell(ws_i)<\ell(w)$}, \\ 
0 & \text{otherwise},
\end{cases}
\quad
\partial_i^y\CS_w = \begin{cases}
\CS_{s_iw} & \text{if $\ell(s_iw)<\ell(w)$}, \\ 
0 & \text{otherwise},
\end{cases}
\end{equation}
for all $i\geq 0$, together with the condition that the constant term
of $\CS_w$ is $1$ if $w=1$, and $0$ otherwise.  For any $w\in
W_\infty$, the corresponding (single) Billey-Haiman Schubert
polynomial of type C is $\CS_w(X):=\CS_w(X,0)$.  It is known that the
$\CS_w(X)$ for $w\in W_\infty$ form a $\Z$-basis of
$\Gamma[X]=\Gamma[x_1,x_2,\ldots]$, and the $\CS_w(X,Y)$ for $w\in
W_\infty$ form a $\Z[Y]$-basis of
$\Gamma[X,Y]=\Gamma[x_1,y_1,x_2,y_2,\ldots]$. See \cite{IMN1} for
further details, noting that the polynomial called $\CS_w(z,t;\, x)$
in op.\ cit., which is a formal power series in the $x$
variables, would be the polynomial denoted by $\CS_w(z,t)$ here.

In the sequel, for every $i\geq 0$, we set $\partial_i:=\partial_i^x$.
For any $w\in W_\infty$, we define a divided difference operator
$\partial_w:=\partial_{i_1}\circ \cdots \circ \partial_{i_\ell}$, for
any choice of indices $(i_1,\ldots,i_\ell)$ such that $w=
s_{i_1}\cdots s_{i_\ell}$ and $\ell=\ell(w)$.  According to
\cite[Prop.\ 8.4]{IMN1}, for any $u,w\in W_\infty$, we have
\begin{equation}
\label{Cstar}
\partial_u\CS_w(X,Y) = \begin{cases}
\CS_{wu^{-1}}(X,Y) & \text{if $\ell(wu^{-1}) = \ell(w) - \ell(u)$}, \\
0 & \text{otherwise}.
\end{cases}
\end{equation}

\subsection{The set $W^{(n)}$ and the ring $\Gamma[X_n,Y_n]$}
For every $n\geq 1$, let 
\[
W^{(n)}:=\{w\in W_\infty\ |\ w_{n+1}< w_{n+2}<\cdots\}.
\] 

\begin{prop}
\label{basisprop}
The $\CS_w(X)$ for $w\in W^{(n)}$ form a $\Z$-basis of $\Gamma[X_n]$.
\end{prop}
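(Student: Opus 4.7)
The plan is to bootstrap from the already-established $\Z$-basis property of $\{\CS_w(X)\}_{w\in W_\infty}$ in $\Gamma[X]$ (which follows by setting $Y=0$ in the $\Z[Y]$-basis statement quoted just above the proposition), exploiting the parabolic structure of $W_\infty$ relative to the subgroup $S_\infty^{(n+)}:=\langle s_{n+1},s_{n+2},\ldots\rangle$. The set $W^{(n)}$ is precisely the set of minimal length right coset representatives of $W_\infty/S_\infty^{(n+)}$, so every $w\in W_\infty$ admits a unique factorization $w=vu$ with $v\in W^{(n)}$, $u\in S_\infty^{(n+)}$, and $\ell(w)=\ell(v)+\ell(u)$.

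First I would verify the containment $\CS_w(X)\in\Gamma[X_n]$ for each $w\in W^{(n)}$. For such $w$ and any $i>n$ we have $w_i<w_{i+1}$, hence $\ell(ws_i)=\ell(w)+1$, so (\ref{ddCeqinit}) gives $\partial_i^x\CS_w=0$. Since $\partial_i^x=(1-s_i)/(x_i-x_{i+1})$, this forces $\CS_w$ to be symmetric in $x_i,x_{i+1}$ for every $i>n$. Because $\CS_w$ involves only finitely many $x_j$, if some $N>n$ were the largest index with $x_N$ appearing, then $s_N$-symmetry would demand $x_{N+1}$ also appear, contradicting maximality; hence $\CS_w\in\Gamma[X_n]$. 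Linear independence of $\{\CS_w(X)\}_{w\in W^{(n)}}$ is then immediate, since any nontrivial relation would contradict the global basis property of $\{\CS_w(X)\}_{w\in W_\infty}$ in $\Gamma[X]$.

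The spanning step is the crux. I would fix $f\in\Gamma[X_n]$ and expand $f=\sum_{w\in W_\infty}a_w\CS_w$ uniquely in $\Gamma[X]$, aiming to show $a_w=0$ for $w\notin W^{(n)}$. For each $i>n$ the reflection $s_i$ fixes every $x_j\in X_n$ and, by (\ref{equiva}), fixes every $c_p$ (only $s_0$ acts nontrivially on the $c_p$), whence $s_if=f$ and $\partial_i^x f=0$. Consequently $\partial_u^x f=0$ for every nontrivial $u\in S_\infty^{(n+)}$. Setting $Y=0$ in (\ref{Cstar}) yields
\[
0=\partial_u^x f = \sum_{\substack{w\in W_\infty\\ \ell(wu^{-1})=\ell(w)-\ell(u)}} a_w\,\CS_{wu^{-1}}(X),
\]
and linear independence of the Schubert basis forces $a_w=0$ whenever $\ell(wu^{-1})=\ell(w)-\ell(u)$. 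For any $w\in W_\infty\ssm W^{(n)}$, the parabolic decomposition writes $w=vu$ uniquely with $v\in W^{(n)}$, $u\in S_\infty^{(n+)}\ssm\{1\}$, and $\ell(w)=\ell(v)+\ell(u)$; applied with this particular $u$, the display above yields $a_w=0$, completing the proof. The main obstacle is the finite-support argument in the containment step---ensuring that no $x_j$ with $j>n$ can actually appear in $\CS_w$---together with the verification that the $W_\infty$-action on $\Gamma$ is trivial away from $s_0$, both of which rest on the characterizations (\ref{ddCeqinit}) and (\ref{equiva}).
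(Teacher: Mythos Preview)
Your proof is correct and follows essentially the same approach as the paper: both establish the containment via $\partial_m\CS_w=0$ for all $m>n$, then obtain spanning by applying divided differences to an expansion $f=\sum_w a_w\CS_w$ and invoking linear independence of the global basis. The paper is slightly more economical---using a single $\partial_m$ with $m>n$ rather than the full parabolic factor $\partial_u$---but the content is the same.
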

\begin{proof}
We have that $\CS_w(X)\in \Gamma[X_n]$ if and only if
$\partial_m\CS_w(X)=0$ for all $m>n$ if and only if $w\in
W^{(n)}$. Suppose that $f\in \Gamma[X_n]$ is a polynomial which is not
in the $\Z$-span of the $\CS_w(X)$, $w\in W^{(n)}$. Then $f$ can be
written as an integer linear combination of Schubert polynomials
\begin{equation}
\label{feq}
f(X) = \sum_w e_w\CS_w(X)
\end{equation}
where there is at least one $w$ with $e_w\neq 0$ and $w\notin
W^{(n)}$. Hence for some $m>n$ we have $\partial_m\CS_w =
\CS_{ws_m}$, and since $\partial_mf=0$, we obtain from (\ref{feq}) a
nontrivial linear dependence relation among the Schubert polynomials,
which is a contradiction. This proves that the $\CS_w(X)$ for $w\in
W^{(n)}$ span $\Gamma[X_n]$, and therefore the result.
\end{proof}

\begin{prop}
\label{basisprop2}
The $\CS_w(X,Y)$ for $w\in W^{(n)}$ form a $\Z[Y]$-basis of $\Gamma[X_n,Y]$.
\end{prop}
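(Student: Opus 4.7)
The plan is to mirror the proof of Proposition \ref{basisprop}, promoting the coefficient ring from $\Z$ to $\Z[Y]$ and using that the full family $\{\CS_w(X,Y)\}_{w\in W_\infty}$ is already known to be a $\Z[Y]$-basis of $\Gamma[X,Y]$.

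The first step is to establish the double analogue of the criterion used implicitly in Proposition \ref{basisprop}: a polynomial $f\in\Gamma[X,Y]$ lies in $\Gamma[X_n,Y]$ if and only if $\partial_m^x f=0$ for every $m>n$. The implication ($\Rightarrow$) is immediate since $s_m$ acts trivially on $\Gamma[X_n,Y]$ for $m>n$. For ($\Leftarrow$), note that $f$ involves only finitely many of the $x_j$, so one may fix $N\geq n$ with $f\in\Gamma[X_N,Y]$ and argue by downward induction on $j=N,N-1,\ldots,n+1$: once $f$ is known to be independent of $x_{j+1},\ldots,x_N$, the vanishing of $\partial_j^x f$ forces $f$ to be symmetric in $x_j$ and $x_{j+1}$, hence (since it does not involve $x_{j+1}$) independent of $x_j$ as well. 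Combining this with (\ref{ddCeqinit}) yields $\CS_w(X,Y)\in\Gamma[X_n,Y]$ iff $\ell(ws_m)>\ell(w)$ for all $m>n$ iff $w_m<w_{m+1}$ for all $m\geq n+1$ iff $w\in W^{(n)}$; in particular every $\CS_w(X,Y)$ with $w\in W^{(n)}$ really does lie in $\Gamma[X_n,Y]$.

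The second step is the spanning argument. Suppose, for contradiction, that some $f\in\Gamma[X_n,Y]$ is not in the $\Z[Y]$-span of $\{\CS_w(X,Y):w\in W^{(n)}\}$. Expanding $f$ uniquely in the $\Z[Y]$-basis of $\Gamma[X,Y]$ as
\[
f(X,Y)=\sum_{w\in W_\infty} e_w(Y)\,\CS_w(X,Y),
\]
there must be some $w\notin W^{(n)}$ with $e_w(Y)\neq 0$; choose $m>n$ with $\ell(ws_m)<\ell(w)$. Applying $\partial_m^x$ and using (\ref{ddCeqinit}), the left side vanishes because $f\in\Gamma[X_n,Y]$, while the right side becomes $\sum_{v:\,\ell(vs_m)<\ell(v)} e_v(Y)\,\CS_{vs_m}(X,Y)$, a nontrivial $\Z[Y]$-linear relation among basis elements of $\Gamma[X,Y]$ (the coefficient $e_w(Y)$ of $\CS_{ws_m}(X,Y)$ is nonzero and $v\mapsto vs_m$ is a bijection on the index set). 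This contradicts the basis property in $\Gamma[X,Y]$. Linear independence of the $\CS_w(X,Y)$ with $w\in W^{(n)}$ over $\Z[Y]$ is inherited from the $\Z[Y]$-basis property in the larger ring.

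The only mildly delicate ingredient is the kernel characterization in the first step; once that is in hand, the argument is a direct transcription of the single-variable proof, with $\Z$ replaced by $\Z[Y]$ throughout.
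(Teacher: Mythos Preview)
Your proof is correct, but it takes a different route from the paper's. The paper does not re-run the divided-difference argument of Proposition~\ref{basisprop}; instead it invokes that proposition for the single polynomials and then uses the expansion
\[
\CS_w(X,Y)=\sum_{\substack{uv=w\\ \ell(u)+\ell(v)=\ell(w),\ u\in S_\infty}} \AS_{u^{-1}}(-Y)\,\CS_v(X)
\]
from \cite[Cor.~8.10]{IMN1}. The lowest $y$-degree term is $\CS_w(X)$, so the change of basis from $\{\CS_w(X)\}_{w\in W^{(n)}}$ to $\{\CS_w(X,Y)\}_{w\in W^{(n)}}$ is unitriangular over $\Z[Y]$, and the result follows immediately. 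Your approach has the virtue of being self-contained (no appeal to the formula from \cite{IMN1}) and of making explicit the useful characterization $\CS_w(X,Y)\in\Gamma[X_n,Y]\Leftrightarrow w\in W^{(n)}$; the paper's approach is shorter and, as a byproduct, records the unitriangular relationship between the single and double bases.
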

\begin{proof}
The $\CS_w(X,Y)$ for $w\in W_\infty$ are linearly independent over $\Z[Y]$.
By Proposition \ref{basisprop} we know that the $\CS_w(X)$ for 
$w\in W^{(n)}$ form a $\Z$-basis of $\Gamma[X_n]$. According to 
\cite[Cor.\ 8.10]{IMN1}, we have
\[
\CS_w(X,Y) = \sum_{uv=w} \AS_{u^{-1}}(-Y)\CS_v(X)
\]
summed over all factorizations $uv=w$ with $\ell(u)+\ell(v)=\ell(w)$
and $u\in S_\infty$. Since the term of lowest $y$-degree in the sum is
$\CS_w(X)$, the proposition follows.
\end{proof}

Let $\CS^{(n)}_w=\CS^{(n)}_w(X_n,Y_n)$ be the polynomial obtained from $\CS_w(X,Y)$ by
setting $x_j=y_j=0$ for all $j>n$.

\begin{cor}
\label{basiscor2}
The $\CS^{(n)}_w$ for $w\in W^{(n)}$ form a $\Z[Y_n]$-basis of
$\Gamma[X_n,Y_n]$.
\end{cor}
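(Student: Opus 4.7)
The plan is to deduce this corollary from Proposition \ref{basisprop2} by specializing $y_j \mapsto 0$ for $j > n$. Observe that $\Z[Y_n]$ is the quotient of $\Z[Y]$ by the ideal $(y_{n+1}, y_{n+2}, \ldots)$, and correspondingly $\Gamma[X_n, Y_n]$ is the quotient of $\Gamma[X_n, Y]$ by the same ideal. The induced specialization map $\psi\colon \Gamma[X_n, Y] \to \Gamma[X_n, Y_n]$ sends $\CS_w(X,Y)$ to $\CS'_w$ by the definition of the latter, noting that for $w \in W^{(n)}$ the polynomial $\CS_w(X,Y)$ already lies in $\Gamma[X_n, Y]$, so the additional specialization $x_j \mapsto 0$ for $j > n$ is vacuous.

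The slickest route is then base change. Since $\Gamma[X_n, Y]$ is a free $\Z[Y]$-module with basis $\{\CS_w(X,Y) : w \in W^{(n)}\}$ by Proposition \ref{basisprop2}, and since $\Gamma[X_n, Y_n] \cong \Gamma[X_n, Y] \otimes_{\Z[Y]} \Z[Y_n]$, tensoring preserves freeness and yields a $\Z[Y_n]$-basis $\{\CS_w(X,Y) \otimes 1\} = \{\CS'_w\}$, as desired.

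If one prefers a more hands-on argument, spanning is immediate: any $g \in \Gamma[X_n, Y_n]$, viewed as an element of $\Gamma[X_n, Y]$, expands uniquely as $g = \sum_{w \in W^{(n)}} a_w(Y)\, \CS_w(X,Y)$ with $a_w(Y) \in \Z[Y]$; applying $\psi$ and using $\psi(g) = g$ gives $g = \sum_w a_w(Y_n, 0)\, \CS'_w$. The main obstacle is $\Z[Y_n]$-linear independence of the $\CS'_w$, which does not follow formally from $\Z[Y]$-independence of the $\CS_w(X,Y)$. To handle this, filter $\Gamma[X_n, Y_n]$ by total $Y_n$-degree. Using the formula $\CS_w(X,Y) = \sum_{uv=w,\, u \in S_\infty} \AS_{u^{-1}}(-Y)\, \CS_v(X)$ of \cite[Cor.~8.10]{IMN1} (with $\ell(u)+\ell(v)=\ell(w)$), the $Y_n$-degree-$0$ part of $\CS'_w$ is exactly $\CS_w(X)$, arising from $u=1$; all other summands have $\AS_{u^{-1}}(-Y)$ of positive total degree and so contribute to strictly positive $Y_n$-degree after specialization, or else vanish. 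The lowest-$Y_n$-degree component of a hypothetical nontrivial relation $\sum_w b_w(Y_n)\, \CS'_w = 0$ would then yield a nontrivial $\Z$-linear relation among $\{\CS_w(X) : w \in W^{(n)}\}$, contradicting Proposition \ref{basisprop}.
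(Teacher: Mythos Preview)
Your proof is correct and matches the paper's intent: the corollary is stated without proof precisely because it follows from Proposition~\ref{basisprop2} by base change along the quotient map $\Z[Y]\to\Z[Y_n]$, exactly as in your first argument. The alternative filtration argument you sketch is also valid (one small note: the lowest-degree component yields a priori a $\Z[Y_n]$-linear relation among the $\CS_w(X)$, but since these are a $\Z$-basis of $\Gamma[X_n]$ by Proposition~\ref{basisprop}, they are automatically $\Z[Y_n]$-independent in $\Gamma[X_n,Y_n]=\Gamma[X_n]\otimes_\Z\Z[Y_n]$).
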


\subsection{The geometrization map $\pi_n$}
\label{geommap}
The double Schubert polynomials $\CS^{(n)}_w(X,Y)$ for $w\in W_n$ represent
the equivariant Schubert classes on the symplectic flag manifold.  Let
$\{e_1,\ldots,e_{2n}\}$ denote the standard symplectic basis of
$E:=\C^{2n}$ and let $F_i$ be the subspace spanned by the first $i$
vectors of this basis, so that $F_{n-i}^\perp = F_{n+i}$ for $0\leq i
\leq n$. Let $B$ denote the stabilizer of the flag $F_\bull$ in the
symplectic group $\Sp_{2n}=\Sp_{2n}(\C)$, and let $T$ be the
associated maximal torus in the Borel subgroup $B$. The symplectic
flag manifold given by $\M_n:=\Sp_{2n}/B$ parametrizes complete flags
$E_\bull$ in $E$ with $E_{n-i}^\perp = E_{n+i}$ for $0\leq i \leq n$.
The $T$-equivariant cohomology ring $\HH^*_{T}(\M_n)$ is defined as
the cohomology ring of the Borel mixing space $ET\times^{T}\M_n$. The
ring $\HH^*_{T}(\M_n)$ is a $\Z[Y_n]$-algebra, where $y_i$ is identified
with the equivariant Chern class $-c_1^T(F_{n+1-i}/F_{n-i})$, for
$1\leq i \leq n$.

The Schubert varieties in $\M_n$ are the closures
of the $B$-orbits, and are indexed by the elements of
$W_n$. Concretely, any $w\in W_n$ corresponds to a Schubert variety
$X_w=X_w(F_\bull)$ of codimension $\ell(w)$, defined by
\[
   X_w := \{ E_\bull \in \M_n \mid \dim(E_r \cap
   F_s) \geq d_w(r,s) \ \, \mathrm{for} \ 1\leq r \leq n, \, 1\leq s \leq 2n \},
\]
where $d_w(r,s)$ is the rank function specified as follows. 
Consider the group monomorphism $\zeta:W_n\hra S_{2n}$ with image
\[
\zeta(W_n)=\{\,\om\in S_{2n} \ |\ \om_i+\om_{2n+1-i} = 2n+1,
 \ \ \text{for all}  \ i\,\},
\]
and determined by setting, for each $w=(w_1,\ldots,w_n)\in W_n$ and
$1\leq i \leq n$,
\[
\zeta(w)_i :=\left\{ \begin{array}{cl}
             n+1-w_{n+1-i} & \mathrm{ if } \ w_{n+1-i} \ \mathrm{is} \
             \mathrm{unbarred}, \\
             n+\ov{w}_{n+1-i} & \mathrm{otherwise}.
             \end{array} \right.
\]
Then $d_w(r,s)$ equals the number of $i\leq r$ such that $\zeta(w)_i >
2n-s$.  Since $X_w$ is stable under the action of $T$, we obtain an
{\em equivariant Schubert class} $[X_w]^T:=[ET\times^{T}X_w]$ in
$\HH^*_{T}(\M_n)$.

Following \cite{IMN1}, there is a surjective homomorphism of graded 
$\Z[Y_n]$-algebras
\[
\pi_n:\Gamma[X_n,Y_n] \to \HH^*_{T}(\M_n)
\]
such that 
\[
\pi_n(\CS^{(n)}_w)= \begin{cases}
[X_w]^T & \text{if $w\in W_n$}, \\
0 & \text{if $w\in W^{(n)}\smallsetminus W_n$}.
\end{cases}
\]
We let $E_i$ denote the $i$-th tautological vector vector bundle over
$\M_n$, for $0\leq i \leq 2n$.  The geometrization map $\pi_n$ is
defined by the equations
\[
\pi_n(x_i) = c_1^T(E_{n+1-i}/E_{n-i}) \ \ \text{and} \ \ 
\pi_n(c_p) = c_p^T(E-E_n-F_n)
\]
for $1\leq i \leq n$ and $p\geq 1$. Here $c_p^T(E-E_n-F_n)$ denotes the 
degree $p$ component of the total Chern class 
$c^T(E-E_n-F_n):=c^T(E)c^T(E_n)^{-1}c^T(F_n)^{-1}$.

\subsection{The kernel of the map $\pi_n$}
For any integer $j\geq 0$ and sequence of variables 
$Z=(z_1,z_2,\ldots)$, define the
elementary and complete symmetric functions 
$e_j(Z)$ and $h_j(Z)$ by the generating series
\[
\prod_{i=1}^{\infty}(1+z_it) = \sum_{j=0}^{\infty}
e_j(Z)t^j \ \ \ \text{and} \ \ \ 
\prod_{i=1}^{\infty}(1-z_it)^{-1} = \sum_{j=0}^{\infty}
h_j(Z)t^j,
\]
respectively. 
If $r\geq 1$ then we let $e^r_j(Z):=e_j(z_1,\ldots,z_r)$ 
and $h^r_j(Z):=h_j(z_1,\ldots,z_r)$ denote the polynomials
obtained from $e_j(Z)$ and $h_j(Z)$ by setting $z_j=0$ for all $j>r$. Let 
$e^0_j(Z)=h^0_j(Z)=\delta_{0j}$, where $\delta_{0j}$ denotes the
Kronecker delta, and for $r<0$, define $h^r_j(Z):=e^{-r}_j(Z)$
and $e^r_j(Z):=h^{-r}_j(Z)$.

For any $k,k'\in \Z$, define the polynomial
${}^kc^{k'}_p={}^kc^{k'}_p(X,Y)$ by
\[
{}^kc^{k'}_p:=\sum_{i=0}^p\sum_{j=0}^p c_{p-j-i}h^{-k}_i(X)h_j^{k'}(-Y).
\]

\begin{defn}
Let $$\wh{\Gamma}^{(n)}:= \Z[{}^nc^n_1, {}^nc^n_2,\ldots]$$ and let
$\wh{\IGam}^{(n)}$ be the ideal of $\Gamma[X_n,Y_n]$ generated by
the homogeneous elements in $\wh{\Gamma}^{(n)}$ of positive degree.
\end{defn}

For any $p\in \Z$, define $\wh{e}_p\in \Z[X_n,Y_n]$ by
\[
\wh{e}_p=\wh{e}_p(X_n/Y_n):=\sum_{i+j=p}e_i(X_n)h_j(-Y_n).
\]
We then have the generating function equation
\begin{equation}
\label{genfuneq}
\sum_{p=0}^\infty {}^nc^n_p\,t^p =
\left(\sum_{p=0}^\infty c_pt^p\right)\left(\sum_{j=0}^n\wh{e}_jt^j\right)=
\left(\sum_{p=0}^\infty c_pt^p\right)\prod_{j=1}^n\frac{1+x_jt}{1+y_jt}.
\end{equation}

\begin{lemma}
\label{IGlem}
We have $\wh{\IGam}^{(n)}\subset \Ker\pi_n$.
\end{lemma}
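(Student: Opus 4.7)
The plan is to verify that each generator ${}^nc^n_p$ with $p\geq 1$ lies in $\Ker\pi_n$ by applying $\pi_n$ to both sides of the generating-function identity (\ref{genfuneq}) and showing that the resulting series in $\HH^*_T(\M_n)[[t]]$ is identically $1$. In other words, I will prove
\[
\sum_{p\geq 0} \pi_n({}^nc^n_p)\,t^p \;=\; 1
\]
as formal power series, from which the lemma is immediate.

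The key calculation proceeds in three steps. First, applying $\pi_n$ to (\ref{genfuneq}) and using $\pi_n(c_p)=c_p^T(E-E_n-F_n)$ gives
\[
\sum_{p\geq 0} \pi_n({}^nc^n_p)\,t^p \;=\; \frac{c^T(E)(t)}{c^T(E_n)(t)\,c^T(F_n)(t)}\cdot \prod_{j=1}^n \frac{1+\pi_n(x_j)t}{1+\pi_n(y_j)t},
\]
where $c^T(V)(t):=\sum_p c^T_p(V)\,t^p$. Second, the splitting principle applied to the tautological filtration $0\subset E_1\subset\cdots\subset E_n$ yields, after reindexing $i=n+1-j$,
\[
\prod_{j=1}^n (1+\pi_n(x_j)t) \;=\; \prod_{i=1}^n\bigl(1+c_1^T(E_i/E_{i-1})t\bigr) \;=\; c^T(E_n)(t),
\]
which cancels the factor $c^T(E_n)(t)^{-1}$ on the right. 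Similarly, letting $\gamma_i:=c_1^T(F_i/F_{i-1})$, the sign in $\pi_n(y_j)=-\gamma_{n+1-j}$ produces
\[
\prod_{j=1}^n (1+\pi_n(y_j)t)^{-1} \;=\; \prod_{i=1}^n (1-\gamma_i t)^{-1} \;=\; c^T(F_n^\vee)(t)^{-1},
\]
since the Chern roots of $F_n^\vee$ are $-\gamma_i$. Third, the symplectic form on $E$ induces a $T$-equivariant isomorphism $E/F_n\cong F_n^\vee$ (because $F_n$ is Lagrangian for $F_\bull$), so that from the short exact sequence $0\to F_n\to E\to F_n^\vee\to 0$ we obtain
\[
c^T(E)(t) \;=\; c^T(F_n)(t)\cdot c^T(F_n^\vee)(t).
\]
Substituting these three identities into the displayed formula yields $\sum_p \pi_n({}^nc^n_p)t^p = 1$, so $\pi_n({}^nc^n_p)=0$ for every $p\geq 1$, and hence the ideal $\wh{\IGam}^{(n)}$ they generate lies in $\Ker\pi_n$.

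The only delicate point is bookkeeping the sign and index conventions: one must read $\pi_n(x_j)$ and $\pi_n(y_j)$ as the Chern roots of $E_n$ and $-F_n$ respectively in reversed order, and verify that the symplectic isomorphism $E/F_n\cong F_n^\vee$ is $T$-equivariant so that the identity $c^T(E)(t)=c^T(F_n)(t)c^T(F_n^\vee)(t)$ holds in $\HH^*_T(\M_n)$. Both are standard consequences of the setup in \S\ref{dspC}, after which the cancellation is purely formal.
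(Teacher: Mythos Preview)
Your proof is correct and is essentially the paper's first proof, unpacked: both compute $\pi_n({}^nc^n_p)$ via Chern classes and use the symplectic structure to see that everything cancels. The paper phrases this more tersely by first recording the general formula $\pi_n({}^kc^{k'}_p)=c_p^T(E-E_{n-k}-F_{n+k'})$ and then specializing to $k=k'=n$, where $E_0=0$ and $F_{2n}=E$; your generating-function version is the same calculation organized differently. The paper also supplies a second, purely algebraic proof using the Borel-type presentation $\HH_T^*(\M_n)\cong \Z[A_n,Y_n]/K_n$, which avoids bundles entirely and may be worth knowing for comparison.
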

\begin{proof}
It suffices to show that ${}^nc^n_p \in \Ker\pi_n$ for each $p\geq
1$. We give two proofs of this result. A straightforward calculation
using Chern roots shows that
\[
\pi_n({}^kc^{k'}_p) = c^T_p(E-E_{n-k}-F_{n+k'})
\]
for all $p,k,k'\in \Z$. Since $E=F_{2n}$, we deduce the lemma 
from this and the properties of Chern classes.

Our second proof proceeds as follows. There is a canonical isomorphism 
of $\Z[Y_n]$-algebras
\[
\HH_{T}^*(\M_n) \cong \Z[A_n,Y_n]/K_n,
\]
where $A_n:=(a_1,\ldots,a_n)$
and $K_n$ is the ideal of $\Z[A_n,Y_n]$
generated by the differences $e_i(A_n^2)-e_i(Y_n^2)$ for $1 \leq i \leq n$
(see for example \cite[\S 3]{F2}). The geometrization 
map $\pi_n$ satisfies $\pi_n(x_j)=-a_j$ for $1\leq j \leq n$, while 
\[
\pi_n(c_p):= \sum_{i+j=p}e_i(A_n)h_j(Y_n), \ \ \ p\geq 0.
\]
A straightforward calculation using (\ref{genfuneq}) gives
\[
\pi_n\left(\sum_{p=0}^\infty {}^nc^n_p\,t^p\right) = 
\prod_{j=1}^n\frac{1-a^2_jt^2}{1-y_j^2t^2}.
\]
On the other hand, we have
\begin{gather*}
\prod_{j=1}^n\frac{1-a^2_jt^2}{1-y_j^2t^2} = 
1 + \left(\prod_{j=1}^n(1-a^2_jt^2) - \prod_{j=1}^n(1-y^2_jt^2)\right)
\cdot\sum_{p=0}^\infty h_p(Y_n^2)t^{2p} \\
= 1+\left(\sum_{r=0}^n(-1)^r(e_r(A_n^2) - e_r(Y_n^2))t^{2r}\right) 
\cdot\sum_{p=0}^\infty h_p(Y_n^2)t^{2p}.
\end{gather*}
The result follows immediately.
\end{proof}

For any three integer vectors $\al,\be,\rho\in \Z^\ell$, which we view
as integer sequences with finite support, define
${}^{\rho}c^\be_\al:={}^{\rho_1}c^{\be_1}_{\al_1}\,{}^{\rho_2}c^{\be_2}_{\al_2}\cdots$.
Recall that for each $i<j$, the operator $R_{ij}$ acts on
integer sequences $\al=(\al_1,\al_2,\ldots)$ by 
\[
R_{ij}(\alpha) := (\alpha_1,\ldots,\alpha_i+1,\ldots,\alpha_j-1,
\ldots).
\]
A {\em raising operator} $R$ is any monomial in these $R_{ij}$'s.
Given any raising operator $R=\prod_{i<j}R_{ij}^{n_{ij}}$, let
$R\, {}^{\rho}c^\be_{\al} := {}^{\rho}c^\be_{R\al}$.  Finally, define
the {\em multi-Schur Pfaffian} ${}^\rho Q^\be_\al$ by
\begin{equation}
\label{Qdef}
{}^\rho Q^\be_\al:= \RR\, {}^\rho c^\be_\al,
\end{equation}
where the raising operator expression $\RR$ is given by
\[
\RR:=\prod_{i<j}\frac{1-R_{ij}}{1+R_{ij}}\,.
\]
The name `multi-Schur Pfaffian' is justified because
${}^\rho Q^\be_\al$ is equal to the Pfaffian of the $r\times r$
skew-symmetric matrix with
\[
\left\{{}^{\rho_i,\rho_j}Q^{\be_i,\be_j}_{\al_i,\al_j}\right\}_{1\leq i<j\leq r}=
\left\{\frac{1-R_{12}}{1+R_{12}}\,
{}^{\rho_i,\rho_j}c^{\be_i,\be_j}_{\al_i,\al_j}\right\}_{1\leq i<j\leq r}
\]
above the main diagonal, following Kazarian \cite{K};
here $r=2\lfloor\ell/2\rfloor$.
We adopt the convention that when some superscript(s) are omitted, the
corresponding indices are equal to zero. Thus ${}^kc_p:={}^kc^0_p$,
$c^{k'}_p:={}^0c^{k'}_p$,
${}^\rho c_{\al}:=\prod_i {}^{\rho_i}c^0_{\al_i}$,
${}^\rho Q_\al:=\RR\, {}^\rho c_{\al}$, $Q_\al:=\RR\, c_\al$, etc.

If $\la=(\la_1>\la_2>\cdots > \la_\ell)$ is a strict partition of
length $\ell$, let $w_\la$ be the corresponding increasing Weyl group
element, so that the negative components of $w_\la$ are exactly
$(-\la_1,\ldots,-\la_\ell)$.

\begin{lemma}
\label{Qprop}
If $\la$ is a strict partition with $\la_1>n$, then 
$\CS^{(n)}_{w_\la}(X_n,Y_n)\in \wh{\IGam}^{(n)}$.
\end{lemma}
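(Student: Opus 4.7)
My plan is to express $\CS'_{w_\la}(X_n,Y_n)$ via the multi-Schur Pfaffian formula for Grassmannian double Schubert polynomials that is established in Proposition~\ref{PfCprop}, and then to verify ideal membership by a degree count on the resulting raising-operator expansion. For a strict partition $\la$ of length $\ell$, I anticipate Proposition~\ref{PfCprop} to yield an identity of the shape
\[
\CS'_{w_\la}(X_n,Y_n)\;=\;{}^{(n^\ell)}Q^{(n^\ell)}_\la\;=\;\prod_{i<j}\frac{1-R_{ij}}{1+R_{ij}}\;\prod_{i=1}^\ell{}^n c^n_{\la_i},
\]
where $n^\ell=(n,\ldots,n)$ denotes the constant sequence of length $\ell$. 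The key feature I rely on is that each Pfaffian entry is one of the declared generators ${}^n c^n_p$ of $\wh{\IGam}^{(n)}$.

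Given this identity, I expand $\prod_{i<j}(1-R_{ij})(1+R_{ij})^{-1}$ formally as a $\Z$-linear combination of raising-operator monomials and apply each monomial to $\prod_i {}^n c^n_{\la_i}$, obtaining
\[
\CS'_{w_\la}(X_n,Y_n)\;=\;\sum_{\mu\in\Z^\ell}b_\mu\prod_{i=1}^\ell{}^n c^n_{\mu_i},
\]
with $b_\mu\in\Z$ and $|\mu|=|\la|$ in every term, since raising operators preserve the total degree. Any term with some $\mu_j<0$ vanishes because ${}^n c^n_p=0$ for $p<0$. In each surviving term, the hypothesis $|\mu|=|\la|\geq\la_1>n\geq 0$ forces at least one coordinate $\mu_{i_0}\geq 1$, so that ${}^n c^n_{\mu_{i_0}}$ is among the distinguished generators of $\wh{\IGam}^{(n)}$. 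Consequently each surviving product belongs to $\wh{\IGam}^{(n)}$, and so does $\CS'_{w_\la}(X_n,Y_n)$.

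The main technical hurdle is securing the Pfaffian identity in the precise form required—particularly the fact that the Pfaffian components are the constant-superscript generators ${}^n c^n_{\la_i}$ and that the identity remains valid when $\la_1 > n$, so that $w_\la\notin W_n$. This is what Proposition~\ref{PfCprop} is designed to provide, extending \cite[Thm.~1.2]{IMN1}; the strategy signalled in the introduction is to combine the idea of \cite[Lemma~1]{T1} with the transition equations of \cite{B,IMN1} and the right divided difference operator analysis developed in Section~\ref{ddstheta}. Once that Pfaffian identity is in hand, the raising-operator expansion and the elementary degree-counting argument sketched above close out the lemma.
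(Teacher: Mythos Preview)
There is a genuine gap: the Pfaffian identity you anticipate from Proposition~\ref{PfCprop} is not what that proposition delivers, and in fact the formula $\CS'_{w_\la}(X_n,Y_n)={}^{(n^\ell)}Q^{(n^\ell)}_\la$ is false. The element $w_\la$ is $0$-Grassmannian, not $n$-Grassmannian, so Proposition~\ref{PfCprop} (with Grassmannian parameter $0$, hence $\wh{w}=w_\la$) yields $\CS_{w_\la}(X,Y)=Q_\la^{\beta(\la)}$ with $\beta(\la)=(1-\la_1,\ldots,1-\la_\ell)$; this is exactly \cite[Thm.~6.6]{IMN1}, the formula used in the paper. The entries there are ${}^0c_{\la_i}^{1-\la_i}\in\Gamma[Y]$, which do not involve $X_n$ at all, whereas your ${}^nc^n_{\la_i}$ depend nontrivially on $X_n$. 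Concretely, for $\la=(p)$ with $p>n$ one has $\CS'_{w_\la}=c_p^{-n}\in\Gamma[Y_n]$, not ${}^nc_p^n$; as noted in the remark following Theorem~\ref{Cthm}, the polynomial ${}^nc_p^n$ is the Schubert polynomial of the \emph{different} element $s_ns_{n+1}\cdots s_{n+p-1}$.

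Your degree argument also signals the problem: it uses only $|\la|\geq 1$ and never the hypothesis $\la_1>n$, so if your Pfaffian formula held it would show $\CS'_{w_\la}\in\wh{\IGam}^{(n)}$ for every nonempty strict $\la$, which fails already for $\la=(1)$ and $n\geq 1$ (then $\CS'_{s_0}=c_1$ maps to a nonzero Schubert class). The paper's route is to restrict $Q_\la^{\beta(\la)}$ to $Y_n$, observe that in every term of the raising-operator expansion the first factor is $c^{-n}_p$ with $p\geq\la_1>n$ (since $R_{1j}$ can only increase the first index), and then prove separately, via the generating-function identity~(\ref{hqeq}), that $c^{-n}_p\in\wh{\IGam}^{(n)}$ whenever $p>n$. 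That auxiliary step---showing that certain elements other than the declared generators ${}^nc_p^n$ lie in the ideal---is precisely what your shortcut attempts to avoid, and it cannot be bypassed in this way.
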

\begin{proof}

For $p\geq 0$,  recall that $c^{-n}_p:={}^0c^{-n}_p\in \Gamma[Y_n]$, so that 
we have
\[
\sum_{p=0}^\infty c_p^{-n}\,t^p =
\left(\sum_{p=0}^\infty c_pt^p\right)\prod_{j=1}^n(1-y_jt).
\]
One has the generating function equation
\begin{equation}
\label{hqeq}
\left(\sum_{p=0}^\infty {}^nc^n_p\,t^p\right)\left(\sum_{p=0}^\infty 
c^{-n}_p(-t)^p\right) = \sum_{j=0}^ne_j(X_n)t^j.
\end{equation}
It follows from (\ref{hqeq}) that
\[
{}^nc^n_p-{}^nc^n_{p-1}c^{-n}_1+\cdots + (-1)^p (c^{-n}_p) = e_p(X_n)
\]
for each $p\geq 0$. We deduce that $c^{-n}_p\in \wh{\IGam}^{(n)}$ 
when $p\geq n+1$.

According to \cite[Thm.\ 6.6]{IMN1}, we have 
\begin{equation}
\label{rof}
\CS_{w_\la}(X,Y) = Q_\la^{\be(\la)} = R^\infty\, c_\la^{\be(\la)}
\end{equation}
in $\Gamma[X,Y]$, where $\be(\la)$ is equal to the integer vector
$(1-\la_1,\ldots,1-\la_\ell)$.  It follows that
\begin{equation}
\label{Qleq}
\CS^{(n)}_{w_\la}(X_n,Y_n) = \ov{Q}^{\be(\la)}_{\la}
\end{equation}
where $\ov{Q}^{\be(\la)}_{\la}$ is obtained from $Q_\la^{\be(\la)}$
by setting $y_j=0$ for all $j>n$. The conclusion of the lemma now
follows immediately by expanding the raising operator formula
(\ref{rof}) for the double Schur $Q$-polynomial
$\ov{Q}^{\be(\la)}_{\la}$ and noting that in each monomial of the
result, the first factor is equal to $c^{-n}_p$ for some $p>n$.
\end{proof}

\begin{lemma}
\label{CSlem}
For any $w\in W_{\infty}\ssm W_n$, we have $\CS^{(n)}_w\in \wh{\IGam}^{(n)}$.
\end{lemma}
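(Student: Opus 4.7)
The plan is to proceed by induction on $\ell(w)$. The case $\ell(w)=0$ is vacuous since then $w=1\in W_n$. For the inductive step, let $w\in W_\infty\ssm W_n$ with $\ell(w)\geq 1$ and assume the result for all shorter elements of $W_\infty\ssm W_n$. I would focus first on the case $w\in W^{(n)}\ssm W_n$; the case $w\in W_\infty\ssm W^{(n)}$ can be reduced to this one via a descent argument at a position $i>n$ with $w_i>w_{i+1}$, using the relation $\partial_i^x \CS_w=\CS_{ws_i}$ from (\ref{ddCeqinit}) to replace $w$ by shorter elements, eventually reaching $W^{(n)}$.

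The heart of the argument is to produce, for $w\in W^{(n)}\ssm W_n$, an identity of the form
\[
\CS'_w \;=\; A\cdot \CS'_{w_\la} \;+\; \sum_{j} B_j\cdot \CS'_{w^{(j)}},
\]
where $A, B_j\in \Gamma[X_n,Y_n]$, $\la$ is a strict partition with $\la_1>n$, and each $w^{(j)}\in W_\infty\ssm W_n$ satisfies $\ell(w^{(j)})<\ell(w)$. Such an identity would be obtained by applying the type C transition equation of \cite{B} (equivalently \cite[Thm.~1.2]{IMN1}) at the smallest index $r$ with $|w_r|>n$. This choice of $r$ forces the distinguished increasing factor $w_\la$ arising in the transition to have $\la_1=|w_r|>n$. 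The algebraic manipulation from \cite[Lemma 1]{T1}, which isolates the ``large'' Grassmannian component of $w$ from the rest, is used to convert the raw transition formula into the cleanly separated shape above.

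Granting the displayed identity, the conclusion is immediate. Lemma \ref{Qprop} gives $\CS'_{w_\la}\in\wh{\IGam}^{(n)}$, whence $A\cdot\CS'_{w_\la}\in\wh{\IGam}^{(n)}$ since $\wh{\IGam}^{(n)}$ is an ideal of $\Gamma[X_n,Y_n]$. By the inductive hypothesis, each $\CS'_{w^{(j)}}$ lies in $\wh{\IGam}^{(n)}$, so each correction term $B_j\CS'_{w^{(j)}}$ lies in the ideal as well. Summing yields $\CS'_w\in\wh{\IGam}^{(n)}$, which closes the induction.

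The main obstacle is establishing the displayed identity with the two crucial properties: (i) that the transition at the chosen index always produces a strict partition $\la$ with $\la_1>n$, so that Lemma \ref{Qprop} applies, and (ii) that every correction term $\CS'_{w^{(j)}}$ is indexed by an element $w^{(j)}$ that still lies outside $W_n$, so that the inductive hypothesis applies (if some $w^{(j)}\in W_n$, the induction would collapse). The combinatorial verification that both properties can simultaneously be arranged---by a careful reading of how the type C transition interacts with the ``large'' entries of $w$---is precisely the content of the algebraic identity borrowed from \cite[Lemma 1]{T1}, and is where the real work of the proof resides.
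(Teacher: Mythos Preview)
Your proposal has a genuine gap: the displayed identity you need cannot be extracted from the transition recursion in the way you describe, and your induction on $\ell(w)$ is not the right framework.

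First, the transition equation (\ref{Ctrans}) is indexed by the \emph{last positive descent} $r$ of $w$, not by ``the smallest index $r$ with $|w_r|>n$'', and the terms $\CS_{vt_{ir}}$, $\CS_{v\ov t_{ir}}$ on the right all have length equal to $\ell(w)$, not strictly less. So a single transition step does not produce your correction terms $\CS'_{w^{(j)}}$ with $\ell(w^{(j)})<\ell(w)$; one must iterate. More seriously, iterating the transition terminates in a combination of $\CS'_{w_\nu}$ for strict partitions $\nu$, but there is no guarantee that $\nu_1>n$ unless one knows a priori that $\mu(w)_1>n$, i.e.\ that $w$ has a \emph{negative} entry of absolute value greater than $n$. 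When $w\in W^{(n)}\ssm W_n$ has all negative entries of absolute value at most $n$ (so the ``large'' entry is positive, say $w_i=n+1$), the transition recursion can and does produce terms indexed by elements of $W_n$ and by increasing elements $w_\nu$ with $\nu_1\le n$. For a concrete instance with $n=1$ and $w=s_1=(2,1)$, one step of (\ref{Ctrans}) gives $\CS_{(2,1)}=(x_1-y_1)\CS_{\mathrm{id}}+\CS_{(\ov 1,2)}$, where both $\mathrm{id}$ and $(\ov 1,2)$ lie in $W_1$; neither term fits your scheme, and Lemma~\ref{Qprop} does not apply to $\CS'_{(\ov 1,2)}$ since $\la_1=1=n$.

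The paper's proof recognizes this dichotomy and treats the two cases separately. When $w_i=-(n+1)$ for some $i$, the quantity $\mu(w)_1>n$ is preserved under transition and the iterated recursion lands entirely on $\CS'_{w_\nu}$ with $\nu_1>n$, so Lemma~\ref{Qprop} finishes. When instead $w_i=n+1$ (positive), the paper does \emph{not} use transition on $w$; it constructs a \emph{longer} element $\ov u$ with $\ov u_1=-(n+1)$, handles $\CS'_{\ov u}$ by the first case, and then descends to $\CS'_w$ by applying divided differences $\partial_i$ for $i\in[0,n-1]$, using the crucial observation that these operators preserve $\wh{\IGam}^{(n)}$ (since $s_i({}^nc^n_p)={}^nc^n_p$). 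This step moves \emph{up} in length before coming down, so it is incompatible with a pure induction on $\ell(w)$. Your sketch is missing both this case split and the ideal-stability argument; invoking ``\cite[Lemma~1]{T1}'' does not supply either ingredient.
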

\begin{proof}
For any positive integers $i<j$ we define reflections $t_{ij}\in S_\infty$
and $\ov{t}_{ij},\ov{t}_{ii} \in W_\infty$ by their right actions
\begin{align*}
(\ldots,w_i,\ldots,w_j,\ldots)\,t_{ij} &= 
(\ldots,w_j,\ldots,w_i,\ldots), \\
(\ldots,w_i,\ldots,w_j,\ldots)\,\ov{t}_{ij} &= 
(\ldots,\ov{w}_j,\ldots,\ov{w}_i,\ldots), \ \ \mathrm{and} \\
(\ldots,w_i,\ldots)\,\ov{t}_{ii} &= 
(\ldots,\ov{w}_i,\ldots),
\end{align*} 
and let $\ov{t}_{ji} := \ov{t}_{ij}$.

Let $w$ be an element of $W_\infty$. According to \cite[Lemma 2]{B},
if $i\leq j$, then $\ell(w\ov{t}_{ij})= \ell(w)+1$ if and only if (i)
$-w_i<w_j$, (ii) in case $i<j$, either $w_i<0$ or $w_j<0$, and (iii)
there is no $p<i$ such that $-w_j<w_p<w_i$, and no $p<j$ such that
$-w_i < w_p < w_j$.

The group $W_\infty$ acts on the polynomial ring $\Z[y_1,y_2,\ldots]$
in the usual way, with $s_i$ for $i\geq 1$ interchanging $y_i$ and
$y_{i+1}$ and leaving all the remaining variables fixed, and $s_0$
mapping $y_1$ to $-y_1$ and fixing the $y_j$ with $j\geq 2$.  Let
$w\in W_\infty$ be non-increasing, let $r$ be the last positive
descent of $w$, let $s:=\max(i>r\ |\ w_i<w_r)$, and let $v:= wt_{rs}$.
Following \cite[Prop.\ 6.12]{IMN1}, the double Schubert polynomials
$\CS_u=\CS_u(X,Y)$ obey the {\em transition equations}
\begin{equation}
\label{Ctrans}
\CS_w = (x_r-v(y_r))\CS_v + \sum_{{1\leq i < r} \atop
{\ell(vt_{ir}) = \ell(w)}} \CS_{vt_{ir}} + 
\sum_{{i\geq 1} \atop {\ell(v\ov{t}_{ir}) = 
\ell(w)}} \CS_{v\ov{t}_{ir}}
\end{equation}
in $\Gamma[X,Y]$. The recursion (\ref{Ctrans}) terminates in a
$\Z[X,Y]$-linear combination of elements $\CS_{w_\nu}(X,Y)$ for
strict partitions $\nu$.

For any $w\in W_\infty$, let $\mu(w)$ denote the strict partition
whose parts are the elements of the set $\{|w_i|\ :\ w_i<0\}$. Clearly
we have $\mu(w)=\mu(wu)$ for any $u\in S_\infty$. In equation
(\ref{Ctrans}), we therefore have $\mu(v)=\mu(vt_{ir}) =
\mu(w)$. Moreover, condition (i) above shows that the parts of
$\mu(v\ov{t}_{ir})$ are greater than or equal to the parts of
$\mu(w)$. In particular, if $\mu(w)_1>n$, then
$\mu(v\ov{t}_{ir})_1>n$.

Assume first that $w\in W_{n+1}\ssm W_n$. If $w_i=-n-1$ for some
$i\leq n+1$, we use the transition recursion (\ref{Ctrans}) to write
$\CS^{(n)}_w$ as a $\Z[X_n,Y_n]$-linear combination of elements
$\CS^{(n)}_{w_\nu}$ for strict partitions $\nu$ with
$\nu_1>n$. Lemma \ref{Qprop} now implies that $\CS^{(n)}_w\in
\wh{\IGam}^{(n)}$.

Next, we consider the case when $w_i=n+1$ for some $i\leq n$. Let 
$$\{v_2,\ldots, v_n\} := \{w_1,\ldots,\wh{w_i},\ldots, w_n\}$$ with 
$v_2>\cdots > v_n$, and define
\[
u:=(n+1,v_2,\ldots,v_n,w_{n+1})\in W_{n+1}
\]
and 
\[
\ov{u}:=us_0=(\ov{n+1},v_2,\ldots,v_n,w_{n+1}).
\]
We have $\CS^{(n)}_{\ov{u}}\in \wh{\IGam}^{(n)}$ from the previous case,
and, using (\ref{ddCeqinit}), that $\partial_0(\CS^{(n)}_{\ov{u}})= \CS^{(n)}_u$.

For any integer $i\in [0,n-1]$, it is easy to check that
$s_i({}^nc_p^n)={}^nc_p^n$, and therefore that
$\partial_i({}^nc_p^n)=0$.  It follows that
$\partial_i(\wh{\IGam}^{(n)})\subset \wh{\IGam}^{(n)}$ for all indices
$i\in [0,n-1]$. Since $\CS^{(n)}_{\ov{u}}\in \wh{\IGam}^{(n)}$, we deduce
that $\CS^{(n)}_u\in \wh{\IGam}^{(n)}$. There exists a permutation
$\sigma\in S_n$ such that $u=w\sigma$ and
$\ell(\sigma)=\ell(u)-\ell(w)$.  Using (\ref{Cstar}), we have
$\CS^{(n)}_w = \partial_{\sigma}(\CS^{(n)}_u)$, and hence conclude that
$\CS^{(n)}_w$ lies in $\wh{\IGam}^{(n)}$.

Finally assume $w\notin W_{n+1}$ and let $m$ be minimal such that 
$w\in W_m$. Then $w\in W_m\ssm W_{m-1}$, so the above argument 
applies with $m-1$ in place of $n$. The result now follows by 
setting $x_j=y_j=0$ for all $j>n$.
\end{proof}

\begin{thm}
\label{Cthm}
Let $J_n:=\bigoplus_{w\in W^{(n)}\ssm W_n}\Z[Y_n]\CS^{(n)}_w$. Then we have 
\begin{equation}
\label{Jeq}
\wh{\IGam}^{(n)} = J_n=\sum_{w\in W_{\infty}\ssm W_n}
\Z[Y_n]\CS^{(n)}_w= \Ker \pi_n.
\end{equation}
We have a canonical isomorphism of $\Z[Y_n]$-algebras
\[
\HH_{T}^*(\Sp_{2n}/B) \cong \Gamma[X_n,Y_n]/\wh{\IGam}^{(n)}.
\]
\end{thm}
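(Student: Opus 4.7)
The plan is to establish the chain of equalities $\wh{\IGam}^{(n)} = J_n = \sum_{w\in W_{\infty}\ssm W_n}\Z[Y_n]\CS'_w = \Ker \pi_n$ by proving a short cycle of inclusions, and then to read off the presentation of $\HH^*_T(\Sp_{2n}/B)$ from the surjectivity of $\pi_n$ together with the identification of its kernel. Three of the four inclusions are essentially packaged in the lemmas above; the only new input needed is the standard fact that the equivariant Schubert classes form a $\Z[Y_n]$-basis of $\HH^*_T(\M_n)$.

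First I would write down the easy inclusions. Since $W^{(n)}\ssm W_n\subseteq W_\infty\ssm W_n$, clearly $J_n\subseteq \sum_{w\in W_{\infty}\ssm W_n}\Z[Y_n]\CS'_w$. Next, Lemma \ref{CSlem} says that $\CS'_w\in \wh{\IGam}^{(n)}$ for every $w\in W_\infty\ssm W_n$, which upgrades the previous inclusion to $\sum_{w\in W_{\infty}\ssm W_n}\Z[Y_n]\CS'_w\subseteq \wh{\IGam}^{(n)}$. Finally, Lemma \ref{IGlem} gives $\wh{\IGam}^{(n)}\subseteq \Ker\pi_n$. It remains to show the closing inclusion $\Ker\pi_n\subseteq J_n$, which I view as the main step and the only place where the cohomology of $\M_n$ enters directly.

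For this step, fix $f\in \Ker\pi_n$. By Corollary \ref{basiscor2}, $f$ has a unique expansion $f=\sum_{w\in W^{(n)}} a_w\CS'_w$ with $a_w\in \Z[Y_n]$. Applying $\pi_n$ and invoking $\pi_n(\CS'_w)=0$ for $w\in W^{(n)}\ssm W_n$, we obtain $0=\pi_n(f)=\sum_{w\in W_n} a_w[X_w]^T$. Since the equivariant Schubert classes $\{[X_w]^T\}_{w\in W_n}$ form a $\Z[Y_n]$-basis of $\HH^*_T(\M_n)$ (from the Bruhat cell decomposition together with the Leray--Hirsch principle applied to the bundle $ET\times^T\M_n\to BT$), we deduce $a_w=0$ for every $w\in W_n$. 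Hence $f=\sum_{w\in W^{(n)}\ssm W_n} a_w\CS'_w$ lies in $J_n$, closing the cycle.

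The presentation of the equivariant cohomology ring now follows automatically: $\pi_n$ is a surjective homomorphism of graded $\Z[Y_n]$-algebras with kernel $\wh{\IGam}^{(n)}$, so the first isomorphism theorem yields the canonical isomorphism $\HH^*_T(\Sp_{2n}/B)\cong \Gamma[X_n,Y_n]/\wh{\IGam}^{(n)}$. The genuine difficulty of the theorem was already absorbed into Lemma \ref{CSlem}, whose proof uses the transition equations to control the Schubert polynomials $\CS'_w$ with $w\notin W_n$; once that lemma is available, the present theorem is a clean assembly of the preceding ingredients.
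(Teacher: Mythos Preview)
Your proof is correct and follows essentially the same approach as the paper: establish the chain $J_n\subseteq \sum_{w\in W_\infty\ssm W_n}\Z[Y_n]\CS'_w\subseteq \wh{\IGam}^{(n)}\subseteq \Ker\pi_n$ via Lemmas \ref{CSlem} and \ref{IGlem}, then close the cycle $\Ker\pi_n\subseteq J_n$ using Corollary \ref{basiscor2} and the fact that the $[X_w]^T$ form a $\Z[Y_n]$-basis of $\HH^*_T(\M_n)$. The only addition you make beyond the paper's argument is a brief justification of that basis property and an explicit appeal to the first isomorphism theorem for the presentation, both of which are fine.
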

\begin{proof}
Lemmas \ref{IGlem} and \ref{CSlem} imply that 
\begin{equation}
\label{inclusions}
J_n\subset \sum_{w\in W_{\infty}\ssm W_n} \Z[Y_n]\CS^{(n)}_w \subset \wh{\IGam}^{(n)}
\subset \Ker \pi_n.
\end{equation}
We claim that $\Ker\pi_n\subset J_n$. Indeed, if $f\in \Ker\pi_n$ then
by Corollary \ref{basiscor2} we have a unique expression
\begin{equation}
\label{ffeq}
f = \sum_{w\in W^{(n)}} f_w \CS^{(n)}_w
\end{equation}
for some coefficients $f_w\in \Z[Y_n]$. Applying the map $\pi_n$ to (\ref{ffeq})
and using (\ref{inclusions}) gives
\[
\sum_{w\in W_n}f_w [X_w]^T = 0.
\]
Since the equivariant Schubert classes $[X_w]^T$ are a $\Z[Y_n]$-basis of 
$\HH^*_{T}(\Sp_{2n}/B)$, we deduce that $f_w=0$ for all $w\in W_n$. It 
follows that $f\in J_n$.
\end{proof}

\begin{remark}
  (a) It is easy to show that ${}^nc^n_p$ lies in
  $\sum_{w\in W_{\infty}\ssm W_n} \Z[Y_n]\CS^{(n)}_w$ for all $n,p\geq
  1$.
  This follows from the fact that
  ${}^nc^n_p = {}^{n+p-1}c^{(n+p-1)+1-p}_p(X_n,Y_n)$ is equal to the
  (restricted) double Schubert polynomial $\CS^{(n)}_{w_{(p)}}$, where
\[
w_{(p)}:=s_ns_{n+1}\cdots s_{n+p-1}. 
\]
In fact, $\CS_{w_{(p)}}(X,Y)$ is equal to the double theta polynomial
${}^{n+p-1}\Ti_p(X,Y)$ of level $n+p-1$, for every $p\geq 1$ (the
definition of ${}^{n+p-1}\Ti_p(X,Y)$ is recalled in (\ref{Cdef})).

\medskip
\noin
(b) The equality $\sum_{w\in W_{\infty}\ssm W_n}\Z[Y_n]\CS^{(n)}_w= \Ker \pi_n$ in 
(\ref{Jeq}) was proved earlier in \cite[Prop.\ 7.7]{IMN1} using 
different methods.
\end{remark}

\medskip

For any elements $f,g\in \Gamma[X_n,Y_n]$, we define the congruence
$f\equiv g$ to mean $f-g\in \wh{\IGam}^{(n)}$. We claim that any
element of $\Gamma[X_n,Y_n]$ is equivalent under $\equiv$ to a
polynomial in $\Z[X_n,Y_n]$. Indeed, we have that
\begin{equation}
\label{tqeh}
\left(\sum_{p=0}^\infty {}^nc^n_p\,t^p\right)\left(\sum_{p=0}^\infty c_p(-t)^p\right) =
\sum_{j=0}^\infty\wh{e}_jt^j.
\end{equation}
It follows from (\ref{tqeh}) that
\begin{equation}
\label{ehtoq}
{}^nc^n_p-{}^nc^n_{p-1}c_1+\cdots + (-1)^r c_p=\wh{e}_p
\end{equation}
for each $p\geq 0$. The relation (\ref{ehtoq}) implies that $c_p
\equiv (-1)^p\wh{e}_p(X_n/Y_n)$, for all $p\geq 0$, proving the claim.

We deduce that $c_\al \equiv (-1)^{|\al|}e_\al(X_n/Y_n)$ for each
integer sequence $\al$, and that
\[
Q_\la = Q_\la(c) \equiv (-1)^{|\la|}\wt{Q}_\la(X_n/Y_n)
\]
for any partition $\la$. Here $\wt{Q}_\la(X_n/Y_n)$ denotes a
supersymmetric $\wt{Q}$-polynomial, namely
\[
\wt{Q}_\la(X_n/Y_n) := R^\infty \,\wh{e}_\la(X_n/Y_n).
\]
The reader can compare this with the remarks in \cite[\S 7.3]{T5}.

\subsection{Partial symplectic flag manifolds}
\label{psfms}

Following \cite{Bo, KK}, there is a standard way to generalize the presentation 
in Theorem \ref{Cthm} to partial flag manifolds $\Sp_{2n}/P$, where $P$
is a parabolic subgroup of $\Sp_{2n}$. The parabolic subgroups $P$ containing 
$B$ correspond to sequences $\fraka \ :\ a_1<\cdots < a_p$ of nonnegative
integers with $a_p<n$. The manifold $\Sp_{2n}/P$ parametrizes partial 
flags of subspaces 
\[
0 \subset E_1 \subset \cdots \subset E_p \subset E=\C^{2n}
\]
with $\dim(E_j) = n-a_{p+1-j}$ for each $j\in [1,p]$ and $E_p$ isotropic. 

A sequence $\fraka$ as above also parametrizes the parabolic
subgroup $W_P$ of $W_n$, which is generated by the simple reflections
$s_i$ for $i\notin\{a_1,\ldots, a_p\}$. Let $\Gamma[X_n,Y_n]^{W_P}$ be
the subring of elements in $\Gamma[X_n,Y_n]$ which are fixed by 
the action of $W_P$, that is, 
\[
\Gamma[X_n,Y_n]^{W_P} = \{ f\in \Gamma[X_n,Y_n]\ |\ 
s_i(f)=f, \ \forall\, i \notin\{a_1,\ldots, a_p\}, \ i<n\}.
\]
Since the action of $W_n$ on $\Gamma[X_n,Y_n]$ is $\Z[Y_n]$-linear, we
see that $\Gamma[X_n,Y_n]^{W_P}$ is a $\Z[Y_n]$-subalgebra of
$\Gamma[X_n,Y_n]$. Let $W^P\subset W^{(n)}$ denote the set
\[
W^P := \{w\in W^{(n)}\ |\ \ell(ws_i) = \ell(w)+1,\  \forall\, i \notin
\{a_1,\ldots, a_p\}, \ i<n\}.
\]

\begin{prop}
\label{wpprop}
We have 
\begin{equation}
\label{topreq}
\Gamma[X_n,Y_n]^{W_P} = \bigoplus_{w\in W^P} \Z[Y_n]\CS^{(n)}_w.
\end{equation}
\end{prop}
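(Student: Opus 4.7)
The plan is to prove both inclusions using the $\Z[Y_n]$-basis $\{\CS'_w : w\in W^{(n)}\}$ from Corollary \ref{basiscor2}, combined with the equivalence
\[
\partial_i f = 0 \ \Longleftrightarrow \ s_i f = f
\]
which follows from the very definition of $\partial_i$. The key preparatory observation is that for any index $i<n$ (including $i=0$, where the $s_0$-action on the $c_p$ is given by (\ref{equiva})), the reflection $s_i$ preserves the subring $\Gamma[X_n,Y_n]$ and the operator $\partial_i$ commutes with the restriction map $\CS_w(X,Y)\mapsto \CS'_w$, since $s_i$ touches only variables inside $X_n$.

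For the inclusion $\supseteq$, I would fix $w\in W^P$ and any $i \notin\{a_1,\ldots,a_p\}$ with $i<n$. By definition of $W^P$ we have $\ell(ws_i)=\ell(w)+1$, so (\ref{Cstar}) gives $\partial_i \CS_w = 0$, hence $\partial_i \CS'_w = 0$, and therefore $s_i \CS'_w = \CS'_w$. Since such $s_i$ generate $W_P$, this shows $\CS'_w \in \Gamma[X_n,Y_n]^{W_P}$, and hence the direct sum $\bigoplus_{w\in W^P}\Z[Y_n]\CS'_w$ lies in $\Gamma[X_n,Y_n]^{W_P}$.

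For the reverse inclusion, I would take $f\in \Gamma[X_n,Y_n]^{W_P}$ and write it uniquely, using Corollary \ref{basiscor2}, as $f=\sum_{w\in W^{(n)}}f_w\CS'_w$ with $f_w\in \Z[Y_n]$. The crucial combinatorial remark is that if $w\in W^{(n)}$ and $i<n$, then $ws_i\in W^{(n)}$ as well, because right multiplication by such $s_i$ affects only the first $n$ entries of $w$. Fixing an $i\notin\{a_1,\ldots,a_p\}$ with $i<n$, the invariance of $f$ gives $\partial_i f=0$; applying $\partial_i$ to the expansion term by term and re-indexing via $u=ws_i$ yields
\[
0 = \partial_i f = \sum_{\substack{u\in W^{(n)}\\ \ell(us_i)=\ell(u)+1}} f_{us_i}\,\CS'_u,
\]
so the $\Z[Y_n]$-linear independence of $\{\CS'_u\}_{u\in W^{(n)}}$ forces $f_w=0$ whenever $\ell(ws_i)<\ell(w)$. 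Taking the union of these vanishing conditions over all admissible $i$ eliminates exactly the indices $w\in W^{(n)}\ssm W^P$, proving (\ref{topreq}).

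I do not expect a genuine obstacle here; the only points requiring care are verifying that restriction to $\Gamma[X_n,Y_n]$ commutes with $\partial_i$ for $i<n$ (in particular for $i=0$, given the twisted formula (\ref{equiva})) and checking that $W^{(n)}$ is stable under right multiplication by $s_i$ for $i<n$. Both are essentially bookkeeping, and the rest of the argument is a standard divided-difference/invariants comparison in the spirit of the Borel picture.
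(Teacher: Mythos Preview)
Your proposal is correct and follows essentially the same approach as the paper: both directions use Corollary \ref{basiscor2} together with the divided difference action (\ref{ddCeqinit})/(\ref{Cstar}) and the equivalence $\partial_i f=0\Leftrightarrow s_if=f$, and the $\subseteq$ inclusion is obtained by expanding $f$ in the basis $\{\CS'_w\}_{w\in W^{(n)}}$, applying $\partial_i$, and invoking linear independence. Your version is slightly more explicit in checking that $W^{(n)}$ is stable under right multiplication by $s_i$ for $i<n$ and that $\partial_i$ commutes with the restriction $\CS_w\mapsto\CS'_w$ (including the $i=0$ case), points which the paper leaves implicit.
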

\begin{proof}
If $f$ is any element in $\Gamma[X_n,Y_n]^{W_P}$, Corollary
\ref{basiscor2} implies that we have an expansion $f = \sum_{w\in
W^{(n)}} d_w\CS^{(n)}_w$ for some coefficients $d_w$ in $\Z[Y_n]$.  If
$u\notin W^P$, there is an index $i<n$ with $i\notin \{a_1,\ldots,
a_p\}$ and $\ell(us_i)=\ell(u)-1$. We have $\partial_if=0$, and
on the other hand, using (\ref{ddCeqinit}), we see that
\[
 \partial_if = \sum_u d_u\CS^{(n)}_{us_i}
\]
summed over all $u$ such that $\ell(us_i)=\ell(u)-1$. It follows 
that $d_u=0$ for all such $u$, and thus that $\Gamma[X_n,Y_n]^{W_P}$ is
contained in the sum on the right hand side of (\ref{topreq}).

For the reverse inclusion, is suffices to show that 
$\CS^{(n)}_w\in \Gamma[X_n,Y_n]^{W_P}$ for all $w\in W^P$. The definition of 
$W^P$ implies that we have $\partial_i\CS^{(n)}_w=0$, or equivalently
$s_i\CS^{(n)}_w=\CS^{(n)}_w$,  for all $i<n$ with 
$i \notin \{a_1,\ldots, a_p\}$. The result follows.
\end{proof}

\begin{cor}
\label{GPcor}
There is a canonical isomorphism of $\Z[Y_n]$-algebras
\[
\HH_{T}^*(\Sp_{2n}/P) \cong \Gamma[X_n,Y_n]^{W_P}/\wh{\IGam}^{(n)}_P
\]
where $\wh{\IGam}^{(n)}_P$ is the ideal of $\Gamma[X_n,Y_n]^{W_P}$
generated by the homogeneous elements in $\wh{\Gamma}^{(n)}$ of
positive degree.
\end{cor}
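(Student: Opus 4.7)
The plan is to combine Theorem~\ref{Cthm} with the standard fact from equivariant cohomology that the projection $p\colon \Sp_{2n}/B \to \Sp_{2n}/P$ induces an identification $p^{*}\colon \HH^{*}_{T}(\Sp_{2n}/P) \xrightarrow{\sim} \HH^{*}_{T}(\Sp_{2n}/B)^{W_{P}}$, and then to pass to $W_{P}$-invariants on both sides of the Borel-type presentation already established for $\Sp_{2n}/B$.

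First I would verify that the isomorphism of Theorem~\ref{Cthm} is $W_{n}$-equivariant. Using the generating function identity~(\ref{genfuneq}), together with the formula~(\ref{equiva}) for the action of $s_{0}$ on the $c_{p}$'s and the obvious action of $s_{i}$ for $i \geq 1$ on the $x_{j}$'s, a short calculation shows that each ${}^{n}c^{n}_{p}$ is fixed by every simple reflection of $W_{n}$; consequently $\wh{\Gamma}^{(n)} \subset \Gamma[X_{n},Y_{n}]^{W_{n}}$, the ideal $\wh{\IGam}^{(n)}$ is $W_{n}$-stable, and the induced $W_{n}$-action on $\Gamma[X_{n},Y_{n}]/\wh{\IGam}^{(n)}$ matches the standard $W_{n}$-action on $\HH^{*}_{T}(\Sp_{2n}/B)$. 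Composing the restriction of $\pi_{n}$ to $\Gamma[X_{n},Y_{n}]^{W_{P}}$ with $(p^{*})^{-1}$ then yields a $\Z[Y_{n}]$-algebra homomorphism $\pi_{n}^{P}\colon \Gamma[X_{n},Y_{n}]^{W_{P}} \to \HH^{*}_{T}(\Sp_{2n}/P)$.

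By Proposition~\ref{wpprop}, the set $\{\CS'_{w}\}_{w \in W^{P}}$ is a $\Z[Y_{n}]$-basis of $\Gamma[X_{n},Y_{n}]^{W_{P}}$. Since $\pi_{n}^{P}(\CS'_{w})$ equals the equivariant Schubert class $[X_{w}]^{T}$ for $w \in W^{P} \cap W_{n}$ and vanishes for $w \in W^{P} \ssm W_{n}$, and since $\{[X_{w}]^{T}\}_{w \in W^{P} \cap W_{n}}$ is a $\Z[Y_{n}]$-basis of $\HH^{*}_{T}(\Sp_{2n}/P)$, one reads off that $\pi_{n}^{P}$ is surjective and
\[
\ker \pi_{n}^{P} \,=\, \bigoplus_{w \in W^{P} \ssm W_{n}} \Z[Y_{n}]\,\CS'_{w} \,=\, \wh{\IGam}^{(n)} \cap \Gamma[X_{n},Y_{n}]^{W_{P}},
\]
where the second equality uses Theorem~\ref{Cthm} together with the uniqueness of the basis expansion in Proposition~\ref{wpprop}.

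It thus remains to identify this intersection with $\wh{\IGam}^{(n)}_{P}$. The inclusion $\wh{\IGam}^{(n)}_{P} \subset \wh{\IGam}^{(n)} \cap \Gamma[X_{n},Y_{n}]^{W_{P}}$ is immediate from $\wh{\Gamma}^{(n)}_{+} \subset \wh{\IGam}^{(n)}$ together with $\wh{\Gamma}^{(n)} \subset \Gamma[X_{n},Y_{n}]^{W_{P}}$. The main obstacle is the reverse inclusion: for each $w \in W^{P} \ssm W_{n}$ one must establish $\CS'_{w} \in \wh{\IGam}^{(n)}_{P}$. Lemma~\ref{CSlem} provides an expression $\CS'_{w} = \sum_{i} a_{i} g_{i}$ with $g_{i} \in \wh{\Gamma}^{(n)}_{+}$, but the coefficients $a_{i} \in \Gamma[X_{n},Y_{n}]$ are not a priori $W_{P}$-invariant, and naive symmetrization introduces the denominator $|W_{P}|$. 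The cleanest way past this obstruction is to invoke the Chevalley-type freeness of $\Gamma[X_{n},Y_{n}]$ as a module over $\Gamma[X_{n},Y_{n}]^{W_{P}}$, a consequence of $W_{P}$ being generated by reflections (and of the fact that $\Gamma[X_{n}]$ is free over $\Gamma^{(n)}$ of rank $|W_{n}|$); the resulting faithful flatness yields the contraction identity $\wh{\IGam}^{(n)} \cap \Gamma[X_{n},Y_{n}]^{W_{P}} = \wh{\IGam}^{(n)}_{P}$, completing the argument.
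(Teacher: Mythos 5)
Your route is the same as the paper's: identify $\HH_{T}^*(\Sp_{2n}/P)$ with the $W_P$-invariants of $\HH_{T}^*(\Sp_{2n}/B)$ via the pullback along $\Sp_{2n}/B\to\Sp_{2n}/P$, use Proposition \ref{wpprop} to see that $\pi_n$ restricted to $\Gamma[X_n,Y_n]^{W_P}$ surjects onto $\HH_{T}^*(\Sp_{2n}/P)$ with kernel $\bigoplus_{w\in W^P\ssm W_n}\Z[Y_n]\CS'_w=\wh{\IGam}^{(n)}\cap\Gamma[X_n,Y_n]^{W_P}$, and then identify this contraction with $\wh{\IGam}^{(n)}_P$. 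The paper compresses that last identification into ``the result follows easily from this and Theorem \ref{Cthm},'' so your decision to isolate it as the real content of the corollary is well taken.

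The one soft spot is how you discharge it. You invoke ``Chevalley-type freeness'' of $\Gamma[X_n,Y_n]$ over $\Gamma[X_n,Y_n]^{W_P}$ as a consequence of $W_P$ being generated by reflections, but Chevalley's theorem concerns polynomial algebras over a field of characteristic zero; here the base ring is $\Z$ and the algebra carries the extra factor $\Gamma$, so the freeness you need is not automatic from the reflection-group hypothesis and is not proved in the paper in this double, parabolic setting (Proposition \ref{freemod} treats only $\Gamma[X_n]$ over $\Gamma^{(n)}$). The assertion is true, but as written it is an appeal to a theorem that does not apply verbatim. Fortunately you need much less than faithful flatness: a $\Gamma[X_n,Y_n]^{W_P}$-linear retraction onto the invariants suffices, and one is supplied by $g\mapsto\partial_{w_P}\bigl(g\,\CS'_{w_P}\bigr)$, where $w_P$ is the longest element of $W_P$. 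Indeed, the Leibnitz rule (\ref{LeibR}) makes $\partial_{w_P}$ linear over $W_P$-invariants, $\partial_{w_P}\CS'_{w_P}=1$ by (\ref{Cstar}), and $\partial_j\partial_{w_P}=0$ for every simple reflection $s_j\in W_P$, so the image is invariant. Applying this retraction to an expression $f=\sum_i a_i\,{}^nc^n_{p_i}$ with $f\in\Gamma[X_n,Y_n]^{W_P}$ replaces each coefficient $a_i$ by the invariant coefficient $\partial_{w_P}(a_i\CS'_{w_P})$ while returning $f$ on the left, which gives $\wh{\IGam}^{(n)}\cap\Gamma[X_n,Y_n]^{W_P}=\wh{\IGam}^{(n)}_P$ directly and keeps the whole argument over $\Z$.
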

\begin{proof}
It is well known that the canonical projection map $h:G/B \to G/P$
induces an injection $h^*: \HH_{T}^*(\Sp_{2n}/P) \hookrightarrow
\HH_{T}^*(\Sp_{2n}/B)$ on equivariant cohomology rings, with the image
of $h^*$ equal to the $W_P$-invariants in $\HH_{T}^*(\Sp_{2n}/B)$
(see for example \cite[Cor.\ (3.20)]{KK}). In fact, since the 
$\CS^{(n)}_w$ for $w\in W^P\cap W_n$ represent the
equivariant Schubert classes coming from $\HH_{T}^*(\Sp_{2n}/P)$, 
we deduce from Proposition \ref{wpprop} that the restriction of 
the geometrization map $\pi_n:\Gamma[X_n,Y_n] \to \HH^*_{T}(\Sp_{2n}/B)$
to the $W_P$-invariants induces a surjection 
\[
\Gamma[X_n,Y_n]^{W_P}\to \HH_{T}^*(\Sp_{2n}/P). 
\]
The result follows easily from this and Theorem \ref{Cthm}.
\end{proof}

\section{Divided differences and double theta polynomials}
\label{ddstheta}

\subsection{Preliminaries}
For every $i\geq 0$, the divided difference operator
$\partial_i=\partial_i^x$ on $\Gamma[X,Y]$ satisfies the Leibnitz rule
\begin{equation}
\label{LeibR}
\partial_i(fg) = (\partial_if)g+(s_if)\partial_ig.
\end{equation}

Observe that $\omega({}^kc^r_p) = {}^{-r}c^{-k}_p$, for all $k,r,p\in
\Z$.  By applying this, it is easy to prove the following dual versions of
\cite[Lemmas 5.4 and 8.2]{IM}.

\begin{lemma}
\label{ddylem}
Suppose that $k,p,r\in \Z$.
For all $i\geq 0$, we have
\[
\partial_i ({}^kc_p^r)= 
\begin{cases}
{}^{k-1}c_{p-1}^r & \text{if $k=\pm i$}, \\
0 & \text{otherwise}.
\end{cases}
\]
\end{lemma}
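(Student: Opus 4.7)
The plan is to deduce the result from IM's formula for the $y$-divided differences by applying the involution $\omega$. First I would verify the stated identity $\omega({}^kc^r_p) = {}^{-r}c^{-k}_p$ by direct computation on the defining sum $\sum_{i,j} c_{p-i-j}\,h^{-k}_i(X)\,h^r_j(-Y)$: $\omega$ fixes each $c_{p-i-j}$ and exchanges the $x$'s with $-y$'s, so $h^{-k}_i(X)$ is sent to $h^{-k}_i(-Y)$ and $h^r_j(-Y)$ is sent to $h^r_j(X)$; swapping the summation indices $i \leftrightarrow j$ and using the identifications $h^r_j := e^{-r}_j$ for $r<0$ (and vice versa) recovers the expansion of ${}^{-r}c^{-k}_p$.

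Second, from $\partial_i^y := \omega\,\partial_i^x\,\omega$ and $\omega^2 = \mathrm{id}$ we obtain $\partial_i = \omega \circ \partial_i^y \circ \omega$. Combining this with step 1 gives
\[
\partial_i({}^kc^r_p) = \omega\bigl(\partial_i^y({}^{-r}c^{-k}_p)\bigr).
\]
The IM lemmas, translated into present notation, assert that $\partial_i^y({}^ac^b_p)$ equals ${}^ac^{b+1}_{p-1}$ when $b = \pm i$ and vanishes otherwise. Substituting $a = -r$, $b = -k$, the inner expression becomes ${}^{-r}c^{-k+1}_{p-1}$ precisely when $-k = \pm i$, i.e., when $k = \pm i$, and is zero otherwise. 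A final application of $\omega$ produces ${}^{k-1}c^r_{p-1}$, which is the claim.

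The main obstacle is bookkeeping: one must verify that the IM lemmas apply with the possibly-negative indices $a = -r$ and $b = -k$ (which they do, thanks to the extended definitions of $h^r_j$ and $e^r_j$ for negative $r$), and also check compatibility in the case $i=0$ with the nontrivial $s_0$-action on $c_p$ from (\ref{equiva}). Should the IM statement be too restricted in scope, one can fall back on a direct generating-function argument: write $\sum_p {}^kc^r_p\,t^p$ as the product $C(t)\,H^{-k}(X,t)\,H^r(-Y,t)$ with $C(t) := \sum_p c_pt^p$, $H^{-k}(X,t) := \sum_a h^{-k}_a(X)t^a$, $H^r(-Y,t) := \sum_j h^r_j(-Y)t^j$, and compute $\partial_i$ on each factor; for $i\geq 1$ only $H^{-k}(X,t)$ contributes, with $\partial_i H^{-k}(X,t) = t\,H^{-(k-1)}(X,t)$ exactly when $i=|k|$, while for $i=0$ the $s_0$-transformations of $C(t)$ and $H^{-k}(X,t)$ multiply by reciprocal rational factors and hence cancel whenever $k \neq 0$, leaving the single residual case $k=0$ to produce ${}^{-1}c^r_{p-1}$.
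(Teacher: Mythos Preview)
Your proposal is correct and follows essentially the same approach as the paper: the paper simply observes that $\omega({}^kc^r_p)={}^{-r}c^{-k}_p$ and says the lemma follows as a dual version of \cite[Lemmas 5.4 and 8.2]{IM}, which is precisely your strategy of conjugating the $y$-divided difference result by $\omega$. Your fallback generating-function argument is a reasonable direct verification but is not needed.
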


\begin{lemma}
\label{imlem1dual}
Suppose that $k\geq 0$ and $r\geq 1$. Then we have
\[
{}^kc_p^{-r} = {}^{k+1}c_p^{-r+1}-(x_{k+1}+y_r)\, {}^kc_{p-1}^{-r+1}.
\]
\end{lemma}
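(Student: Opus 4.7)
The plan is to verify the identity via generating functions in a single formal variable $t$. Using the definition of ${}^kc^{k'}_p$ together with the sign conventions $h^r_j = e^{-r}_j$ and $e^r_j = h^{-r}_j$ for $r<0$, I can compute, for any integers $k,k'\geq 0$, the product factorization
\[
\sum_{p\geq 0} {}^kc^{-k'}_p \, t^p \;=\; C(t)\,\prod_{a=1}^{k}(1+x_a t)\,\prod_{b=1}^{k'}(1-y_b t),
\]
where $C(t):=\sum_{p\geq 0} c_p t^p$. This is immediate from the standard generating series for $e_j$ and $h_j$ (and the convention $e^0_j=h^0_j=\delta_{0j}$ handles the boundary case $k'=0$).

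Specializing, the generating series for the three sequences appearing in the lemma are
\[
\sum_p {}^kc_p^{-r} t^p = C(t)\,E_k(t)\,\overline{Y}_{r}(t), \qquad
\sum_p {}^{k+1}c_p^{-r+1} t^p = C(t)\,E_{k+1}(t)\,\overline{Y}_{r-1}(t),
\]
where I set $E_k(t):=\prod_{a=1}^k(1+x_a t)$ and $\overline{Y}_r(t):=\prod_{b=1}^r(1-y_b t)$. Multiplying $\sum_p {}^kc_{p-1}^{-r+1}t^p$ by $t$ just shifts the degree, so the right-hand side of the claimed identity translates into
\[
C(t)\,E_k(t)\,\overline{Y}_{r-1}(t)\bigl[(1+x_{k+1}t) - (x_{k+1}+y_r)t\bigr].
\]

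The bracket simplifies to $1 - y_r t$, which folds into $\overline{Y}_{r-1}(t)$ to produce $\overline{Y}_r(t)$, recovering exactly the generating series for ${}^kc_p^{-r}$. Comparing coefficients of $t^p$ on both sides yields the stated identity. There is no real obstacle here; the only point requiring care is the interpretation of the indices for negative superscripts (where $h^{-r}_j(-Y)=e^r_j(-Y)$), and the trivial boundary case $r=1$ in which $\overline{Y}_0(t)=1$, both of which are handled uniformly by the above generating-function computation.
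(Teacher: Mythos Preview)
Your generating-function argument is correct. The factorization
\[
\sum_{p\geq 0} {}^kc^{-r}_p\,t^p \;=\; C(t)\,\prod_{a=1}^{k}(1+x_a t)\,\prod_{b=1}^{r}(1-y_b t)
\]
follows from the definition of ${}^kc^{k'}_p$ once one unwinds the convention $h^{-m}_j=e^m_j$ for $m\geq 0$ (the upper limits $i,j\leq p$ in the defining double sum are harmless because $c_{p-i-j}=0$ unless $i+j\leq p$). The algebraic manipulation $(1+x_{k+1}t)-(x_{k+1}+y_r)t=1-y_rt$ then gives the identity at the level of series, and comparing coefficients finishes the proof.

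The paper takes a different route: it does not argue directly, but instead observes that the involution $\omega$ (defined by $\omega(x_j)=-y_j$, $\omega(y_j)=-x_j$, $\omega(c_p)=c_p$) satisfies $\omega({}^kc^r_p)={}^{-r}c^{-k}_p$, and then deduces the lemma as the $\omega$-dual of a recursion already established in \cite{IM}. Your approach is more self-contained and elementary, avoiding any appeal to the external reference; the paper's approach is shorter given that the dual statement is already in the literature, and it highlights the $x\leftrightarrow y$ symmetry built into the theory. Either way the content is the same one-line polynomial identity.
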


We also require the following lemma.

\begin{lemma}[\cite{IM}, Prop.\ 5.4]
\label{imlem2}
Suppose that $k,r\geq 0$ and $p> k+r$. Then we have
\[
{}^{(k_1,\ldots,k,k,\ldots,k_\ell)}
Q_{(p_1,\ldots,p,p,\ldots,p_\ell)}^{(r_1,\ldots,-r,-r,\ldots,r_\ell)}= 0.
\]
\end{lemma}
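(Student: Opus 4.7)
The plan is to reduce the statement to a single two-entry identity and then prove the latter via a generating-function argument based on the defining relations (\ref{basicrels}) of $\Gamma$. First, use the Pfaffian description of ${}^\rho Q^\be_\al$ as $\operatorname{Pf}(M)$, where $M$ is the skew-symmetric matrix with entries $M_{ij} = {}^{\rho_i,\rho_j}Q^{\be_i,\be_j}_{\al_i,\al_j}$. In the lemma two adjacent positions $s, s+1$ carry identical data $(\rho_s,\al_s,\be_s) = (\rho_{s+1},\al_{s+1},\be_{s+1}) = (k,p,-r)$. For every column $j \neq s, s+1$ this forces $M_{sj} = M_{s+1,j}$, while the $2\times 2$ block at rows/columns $s, s+1$ is determined by the single entry $a := M_{s,s+1} = {}^{k,k}Q^{-r,-r}_{p,p}$. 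If $a = 0$, rows $s$ and $s+1$ of $M$ coincide, so $\det M = 0$; combined with $\operatorname{Pf}(M)^2 = \det M$ in our integral coefficient ring, this gives ${}^\rho Q^\be_\al = 0$. The problem is thus reduced to proving ${}^{k,k}Q^{-r,-r}_{p,p} = 0$ whenever $p > k+r$.

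For this two-entry identity, introduce the generating functions $C(t) := \sum_{p\geq 0} c_p\,t^p$ and
\[
D(t) := \sum_{p\geq 0} {}^kc^{-r}_p\,t^p = C(t)\prod_{i=1}^k(1+x_it)\prod_{j=1}^r(1-y_jt),
\]
the second equality following from the definition of ${}^kc^{-r}_p$ and the standard generating series for the $e_i$ and $h_j$. A direct check shows that the relations (\ref{basicrels}) are equivalent to $C(t)C(-t) = 1$ in $\Gamma[[t]]$, and therefore
\[
D(t)D(-t) = \prod_{i=1}^k(1-x_i^2t^2)\prod_{j=1}^r(1-y_j^2t^2),
\]
a polynomial in $t$ of degree $2(k+r)$. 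On the other hand, expanding $\tfrac{1-R_{12}}{1+R_{12}} = 1 + 2\sum_{j\geq 1}(-1)^jR_{12}^j$ and pairing the summation indices $a$ and $2p-a$ in the coefficient of $t^{2p}$ yields
\[
{}^{k,k}Q^{-r,-r}_{p,p} \;=\; (-1)^p\,\bigl[t^{2p}\bigr]\,D(t)D(-t),
\]
which vanishes as soon as $p > k+r$.

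The main technical point is the identification ${}^{k,k}Q^{-r,-r}_{p,p} = (-1)^p[t^{2p}]D(t)D(-t)$: one must carefully match the $2(-1)^j$ coefficients from the expansion of $(1-R_{12})/(1+R_{12})$ with the signs coming from the substitution $t \mapsto -t$, which amounts to the pairing of indices $(a, 2p-a)$ in the coefficient of $t^{2p}$ in $D(t)D(-t)$. Once this identification is made, the degree bound on $D(t)D(-t)$ immediately gives the vanishing, and the Pfaffian reduction of the first paragraph concludes the proof.
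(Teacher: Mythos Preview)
Your argument is correct. The paper does not prove this lemma---it is quoted from \cite{IM} without proof---so there is nothing to compare against. Your approach, reducing to the $2\times 2$ entry via the Pfaffian description and then dispatching that case with the generating-function identity $C(t)C(-t)=1$ in $\Gamma[[t]]$, is the standard one and is essentially how such vanishing statements are proved in the cited reference as well.

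Two minor remarks for completeness. First, to pass from $\Pf(M)^2=\det M=0$ to $\Pf(M)=0$ you need $\Gamma[X,Y]$ to be reduced; this holds because $\Gamma$ embeds as a subring of the ring of symmetric functions (a polynomial ring over $\Z$) and is therefore an integral domain. Alternatively you can avoid the determinant: a skew-symmetric matrix with two equal rows is invariant under simultaneously swapping those rows and the corresponding columns, an operation that negates the Pfaffian, so $2\Pf(M)=0$ and hence $\Pf(M)=0$ by torsion-freeness. Second, when $\ell$ is odd the Pfaffian description involves padding $\al$ by a trailing zero; your equal-rows argument still goes through since the extra entries $M_{s,\ell+1}$ and $M_{s+1,\ell+1}$ again coincide.
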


\subsection{The shape of a signed permutation}
\label{ssp}
We proceed to define certain statistics of an element of $W_\infty$.

\begin{defn}
\label{codedef}
Let $w\in W_\infty$ be a signed permutation. The strict partition
$\mu=\mu(w)$ is the one whose parts are the absolute values of the
negative entries of $w$, arranged in decreasing order. The {\em
  A-code} of $w$ is the sequence $\gamma=\gamma(w)$ with
$\gamma_i:=\#\{j>i\ |\ w_j<w_i\}$. We define a partition
$\delta=\delta(w)$ whose parts are the non-zero entries $\gamma_i$
arranged in weakly decreasing order, and let $\nu(w):=\delta(w)'$ be
the conjugate of $\delta$. Finally, the {\em shape} of $w$ is the
partition $\la(w):=\mu(w)+\nu(w)$.
\end{defn}

It is easy to see that $w$ is uniquely determined by $\mu(w)$ and
$\gamma(w)$, and that $|\la(w)|=\ell(w)$. The shape $\la(w)$ of an element 
$w\in W_\infty$ is a natural generalization of the shape of a permutation,
as defined in \cite[Chp.\ 1]{M2}.

\begin{example}
(a) For the signed permutation $w := (\ov{3}, 2, \ov{7}, \ov{1}, 5, 4,
  \ov{6})$ in $W_7$, we obtain $\mu = (7,6,3,1)$, $\gamma=(2,3, 0, 1,
2, 1, 0)$, $\delta = (3,2,2,1,1)$, $\nu= (5, 3, 1)$, and $\la = (12,
9, 4, 1)$.

\smallskip \noin (b) An element $w\in W_\infty$ is $n$-Grassmannian if
$\ell(ws_i)>\ell(w)$ for all $i\neq n$, while a partition $\la$ is
called $n$-strict if all its parts $\la_i$ greater than $n$ are
distinct.  Following \cite[\S 6.1]{BKT1}, these two objects are in
one-to-one correspondence with each other. If $w$ is an
$n$-Grassmannian element of $W_\infty$, then $\la(w)$ is the
$n$-strict partition associated to $w$, in the sense of op.\ cit.
\end{example}

\begin{lemma}
\label{Mcdlem}
If $i\geq 1$, $w\in W_\infty$, and $\gamma=\gamma(w)$, then 
\[
\gamma_i>\gamma_{i+1} \Leftrightarrow w_i>w_{i+1} \Leftrightarrow
\ell(ws_i)=\ell(w) -1.
\]
If any of the above conditions hold, then 
\[
\gamma(ws_i) = (\gamma_1,\ldots, \gamma_{i-1},
\gamma_{i+1},\gamma_i-1,\gamma_{i+2},\gamma_{i+3},\ldots).
\]
\end{lemma}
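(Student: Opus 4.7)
The plan is to verify the three claims by direct combinatorial inspection, relying only on the definition $\gamma_i(w)=\#\{j>i : w_j<w_i\}$ and the standard descent characterization of length in the Coxeter group $W_\infty$. Since $i\geq 1$, the simple reflection $s_i$ acts on $w$ by swapping the values at positions $i$ and $i+1$ without affecting any signs, so the negative-part statistic $\mu(w)$ will play no explicit role.

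For the equivalence $\gamma_i>\gamma_{i+1}\Leftrightarrow w_i>w_{i+1}$, I would compare the two sets $A=\{j>i+1 : w_j<w_i\}$ and $B=\{j>i+1 : w_j<w_{i+1}\}$. When $w_i>w_{i+1}$, one has $B\subseteq A$, and in addition the index $j=i+1$ contributes to $\gamma_i$ but not to $\gamma_{i+1}$, so $\gamma_i\geq\gamma_{i+1}+1$. When $w_i<w_{i+1}$, instead $A\subseteq B$ and $j=i+1$ is no longer counted by $\gamma_i$, giving $\gamma_i\leq\gamma_{i+1}$. The second equivalence $w_i>w_{i+1}\Leftrightarrow\ell(ws_i)=\ell(w)-1$ is the standard descent characterization for right multiplication by a simple reflection $s_i$ with $i\geq 1$ in $W_\infty$, which follows from the usual length formula for signed permutations.

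For the displayed formula, assuming $w_i>w_{i+1}$, I would split the indices $j$ into four ranges. For $j>i+1$, both $(ws_i)_j$ and the values at positions $k>j$ agree with those of $w$, so $\gamma_j$ is unchanged. For $j<i$, the contribution to $\gamma_j$ from positions outside $\{i,i+1\}$ is unaffected, and the contribution from $\{i,i+1\}$ depends only on the unordered pair $\{w_i,w_{i+1}\}$, which is preserved by the swap. For $j=i$, one has $(ws_i)_i=w_{i+1}$ and the new value at position $i+1$ is $w_i>w_{i+1}$, which is not counted, yielding $\gamma_i(ws_i)=\#\{k>i+1 : w_k<w_{i+1}\}=\gamma_{i+1}(w)$. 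For $j=i+1$, $(ws_i)_{i+1}=w_i$ and $\gamma_{i+1}(ws_i)=\#\{k>i+1 : w_k<w_i\}=\gamma_i(w)-1$, the decrement of $1$ accounting for the contribution of position $i+1$ to $\gamma_i(w)$ that has now disappeared.

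The only potential obstacle is bookkeeping in the final case analysis: one must correctly attribute the decrement by $1$ at $j=i+1$ to the lost contribution from position $i+1$ itself, and verify that no double-counting occurs across the four index ranges. Conceptually there is nothing deep here; the lemma is the direct signed-permutation analogue of the corresponding standard fact for the Lehmer code of a permutation in $S_\infty$.
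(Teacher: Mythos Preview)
Your argument is correct. The paper's own proof simply cites Macdonald's \emph{Notes on Schubert polynomials} (items (1.23) and (1.24)), relying on the observation that for $i\geq 1$ the A-code $\gamma(w)$ and the right action of $s_i$ depend only on the sequence $(w_1,w_2,\ldots)$ of distinct integers, so the type~A statements in Macdonald apply verbatim; you have written out exactly that verification in full.
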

\begin{proof}
These follow immediately from \cite[(1.23) and (1.24)]{M2}.
\end{proof}

Let $\beta(w)$ be the sequence defined by
$\be(w)_i=\min(1-\mu(w)_i,0)$ for each $i\geq 1$. For each $n\geq 1$,
let $w_0^{(n)}:=(\ov{1},\ldots,\ov{n})$ denote the longest element in
$W_n$.

\begin{prop}
\label{PfCprop}
Suppose that $m>n\geq 0$ and $w\in W_m$ is an $n$-Grassmannian element. 
Set $\wh{w}:=ww_0^{(n)}$. Then we have 
\[
\CS_{\wh{w}}(X,Y) = {}^{\nu(\wh{w})}Q_{\la(\wh{w})}^{\beta(\wh{w})}
\]
in the ring $\Gamma[X_n,Y_{m-1}]$. In particular, if $w\in S_m$, then
we have
\[
\CS_{\wh{w}}(X,Y) = {}^{\delta_{n-1}}Q_{\delta_n+\delta_{n-1}+\la(w)'}^{(1-w_n,\ldots,1-w_1)}.
\]
\end{prop}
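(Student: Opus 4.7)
My plan is to prove this formula by induction on $|\nu(\wh w)|$, using right divided difference operators and reducing to the strict-partition case (\ref{rof}).

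\emph{Base case.} If $|\nu(\wh w)|=0$, the $A$-code $\gamma(\wh w)$ vanishes identically, so $\wh w$ is increasing; hence $\wh w = w_{\mu(\wh w)}$. Then $\la(\wh w)=\mu(\wh w)$ and the claim reduces directly to (\ref{rof}): $\CS_{w_\mu}(X,Y)=Q_\mu^{\be(\mu)}$.

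\emph{Inductive step.} Suppose $|\nu(\wh w)|\geq 1$. By Lemma \ref{Mcdlem} there exists $i\geq 1$ with $\gamma_i(\wh w)>\gamma_{i+1}(\wh w)$, equivalently $\wh w_i > \wh w_{i+1}$, so that $\ell(\wh w s_i)=\ell(\wh w)-1$. Applying $\partial_i^x$ to the left-hand side gives $\CS_{\wh w s_i}$ by (\ref{ddCeqinit}); meanwhile $\mu(\wh w s_i)=\mu(\wh w)$ and $|\nu(\wh w s_i)|=|\nu(\wh w)|-1$ by Lemma \ref{Mcdlem}, so the inductive hypothesis identifies $\CS_{\wh w s_i}$ with the corresponding multi-Schur Pfaffian ${}^{\nu(\wh w s_i)}Q^{\be(\wh w s_i)}_{\la(\wh w s_i)}$. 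To close the induction, I compute $\partial_i^x$ of the right-hand side ${}^{\nu(\wh w)}Q^{\be(\wh w)}_{\la(\wh w)}$ via the Leibniz rule (\ref{LeibR}) and Lemma \ref{ddylem}: since $\partial_i^x({}^k c^r_p)={}^{k-1}c^r_{p-1}$ when $k=\pm i$ and zero otherwise, and since the raising operator $\RR$ commutes with $\partial_i^x$ (because $\RR$ only shifts the subscripts $\la$ while $\partial_i^x$ acts on the polynomial generators of $\Gamma[X,Y]$), the divided difference acts coordinate-wise on the factors ${}^{\nu_j}c^{\be_j}_{\la_j}$ of the Pfaffian, lowering one pair $(\nu_j,\la_j)$ by $(1,1)$. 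Lemma \ref{imlem1dual} is invoked to put intermediate superscripts in a uniform shape, and the output is then compared to ${}^{\nu(\wh w s_i)}Q^{\be(\wh w s_i)}_{\la(\wh w s_i)}$.

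The main obstacle is the bookkeeping in this Leibniz expansion: several factors may simultaneously have $\nu_j=\pm i$, producing multiple Pfaffian-like terms, and one must invoke the vanishing criterion of Lemma \ref{imlem2} to discard spurious contributions so that what remains matches precisely the Pfaffian for $\wh w s_i$. A second subtlety is that the inductive hypothesis should be formulated broadly enough to cover $\wh w s_i$, which in general is not itself of the form $v w_0^{(n)}$ for another $n$-Grassmannian $v$; the identity is really valid for a wider class of signed permutations closed under the descent-lowering right multiplications employed here.

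For the \emph{special case} $w\in S_m$, a direct computation of the $A$-code using the hypotheses $0<w_1<\cdots <w_n$ and $0<w_{n+1}<w_{n+2}<\cdots$ yields $\mu(\wh w)=(w_n,\ldots,w_1)$, $\nu(\wh w)=\delta_{n-1}$ (padded to length $n$), $\be(\wh w)=(1-w_n,\ldots,1-w_1)$, and $\la(\wh w)=\delta_n+\delta_{n-1}+\la(w)'$, so the stated specialization follows by direct substitution into the general formula.
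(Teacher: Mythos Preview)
Your inductive scheme has a genuine logical gap: the induction is running in the wrong direction for divided differences.  In the inductive step you establish
\[
\partial_i\bigl(\CS_{\wh w}\bigr)=\CS_{\wh w s_i}={}^{\nu(\wh w s_i)}Q^{\be(\wh w s_i)}_{\la(\wh w s_i)}
\]
and you then propose to compute that $\partial_i$ applied to ${}^{\nu(\wh w)}Q^{\be(\wh w)}_{\la(\wh w)}$ yields the same Pfaffian.  But from $\partial_i A=\partial_i B$ you can only conclude that $A-B$ is $s_i$-invariant, not that $A=B$; the kernel of $\partial_i$ is the entire ring of $s_i$-symmetric elements.  Nothing in your sketch explains how to recover $\CS_{\wh w}$ from $\CS_{\wh w s_i}$.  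To salvage the approach you would need to verify the divided-difference identities for \emph{all} $\partial_i^x$ and $\partial_i^y$ simultaneously (including the vanishing for ascents) and then invoke the uniqueness characterisation of the $\CS_w$---a considerably more involved calculation than the one you outline.

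The paper avoids this problem by inducting \emph{downward} in length.  It starts from the known top formula $\CS_{w_0^{(m)}}={}^{\delta_{m-1}}Q_{\delta_m+\delta_{m-1}}^{-\delta_{m-1}}$ of \cite[Thm.~1.2]{IMN1} and writes $w_0^{(m)}=\wh w\,v_1\cdots v_r$ with $\ell$ additive, so that (\ref{Cstar}) gives $\CS_{\wh w}=\partial_{v_1}\cdots\partial_{v_r}\CS_{w_0^{(m)}}$ \emph{deterministically}.  The computation then consists of tracking how each $\partial_{v_j}$ acts on the multi-Schur Pfaffian via Lemmas \ref{ddylem}, \ref{imlem1dual}, and \ref{imlem2}, first for $w\in S_m$ and then for the general $n$-Grassmannian case by a further factorisation $\wh u=\wh w\,v'_{p-n}\cdots v'_1$.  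Your own concern about the inductive hypothesis not being closed under $\wh w\mapsto \wh w s_i$ is a symptom of the same structural issue: descending from the longest element along these specific chains keeps one inside the class where the statement is formulated, whereas ascending from the increasing elements does not.
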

\begin{proof}
We first consider the case when $w\in S_m$. We have
\[
w = (a_1,\ldots,a_n,d_1,\ldots, d_r)
\]
where $r=m-n$, $0<a_1<\cdots<a_n$ and $0<d_1<\cdots<d_r$. If $\la:=\la(w)$ then
\[
\la_j=n+j-d_j = m-d_j-(r-j) \ \ \ \text{for $1\leq j \leq r$}.
\]
 
Let $w_0^{(m)}:=(\ov{1},\ldots,\ov{m})$ be the longest element in $W_m$.
Then we have $w_0^{(m)}=\wh{w}v_1\cdots v_r$,
where $\ell(w_0^{(m)})=\ell(\wh{w})+\sum_{j=1}^r\ell(v_j)$ and
\[
v_j=s_{n+j-1}\cdots s_1s_0s_1\cdots s_{d_j-1}, \ \ 1\leq j \leq r.
\]
One knows from \cite[Thm.\ 1.2]{IMN1} and \cite[Prop.\ 3.2]{T7}
that the equation 
\[
\CS_{w^{(m)}_0}(X,Y) 
= {}^{\delta_{m-1}}Q_{\delta_m+\delta_{m-1}}^{-\delta_{m-1}}
\]
holds in $\Gamma[X,Y]$. It follows from this and (\ref{Cstar}) that
\begin{equation}
\label{Cwn}
\CS_{\wh{w}} = \partial_{v_1}\cdots \partial_{v_r}\left(\CS_{w_0^{(m)}}\right) = 
\partial_{v_1}\cdots \partial_{v_r} \left(
{}^{\delta_{m-1}}Q_{\delta_m+\delta_{m-1}}^{-\delta_{m-1}}\right).
\end{equation}

Using Lemmas \ref{ddylem} and \ref{imlem1dual}, for any $p,q\in \Z$ with 
$p\geq 1$, we obtain
\begin{equation}
\label{pqeq}
\partial_p({}^pc^{-p}_q) = {}^{p-1}c^{-p}_{q-1} = {}^pc^{1-p}_{q-1} 
-(x_p+y_p)\,{}^{p-1}c^{1-p}_{q-2}.
\end{equation}
Let $\epsilon_j$ denote the $j$-th standard basis vector in $\Z^m$. The
Leibnitz rule and (\ref{pqeq}) imply that for any integer vector
$\al = (\al_1,\ldots,\al_m)$, we have 
\[
\partial_p\left({}^{\delta_{m-1}}c^{-\delta_{m-1}}_{\al} \right)= 
{}^{\delta_{m-1}}c^{-\delta_{m-1}+\epsilon_{m-p}}_{\al-\epsilon_{m-p}}-
(x_p+y_p)\,{}^{\delta_{m-1}-\epsilon_{m-p}}
c^{-\delta_{m-1}+\epsilon_{m-p}}_{\al - 2\epsilon_{m-p}}.
\]
We deduce from this and Lemma \ref{imlem2} that 
\begin{align*}
\partial_p
{}^{\delta_{m-1}}Q_{\delta_m+\delta_{m-1}}^{-\delta_{m-1}} &= 
{}^{\delta_{m-1}}Q^{-\delta_{m-1}+\epsilon_{m-p}}_{\delta_m+\delta_{m-1}-\epsilon_{m-p}} \\ 
&= {}^{(m-1,\ldots,1,0)}
Q_{(2m-1,\ldots,2p+3, 2p, 2p-1,2p-3,\ldots,1)}^{(1-m,\ldots, -1-p, 1-p,1-p,2-p,\ldots, -1,0)}.
\end{align*}
Iterating this calculation for $p=d_r-1,\ldots,1$ gives
\[
(\partial_1\cdots\partial_{d_r-1})\CS_{w_0^{(m)}} = 
{}^{(m-1,\ldots,1,0)}
Q_{(2m-1,\ldots,2d_r+1, 2d_r-2, 2d_r-4,\ldots,2,1)}^{(1-m,\ldots, -d_r, 2-d_r,3-d_r,\ldots, -1,0,0)}.
\]
Since $\partial_0({}^0c^0_1) = {}^{-1}c^0_0 = 1$, it follows that
\[
(\partial_0\partial_1\cdots\partial_{d_r-1})\CS_{w_0^{(m)}} =
  {}^{(m-1,\ldots,1)}Q_{(2m-1,\ldots,2d_r+1, 2d_r-2, 2d_r-4,\ldots,2)}^{(1-m,\ldots, -d_r, 2-d_r,3-d_r,\ldots, -1,0)}.
\]
Applying Lemma \ref{ddylem} alone $m-1$ times now gives
\[
\partial_{v_r}(\CS_{w_0^{(m)}}) = {}^{\delta_{m-2}}Q^{(1-m,\ldots,
  -d_r, 2-d_r,3-d_r,\ldots,
  -1,0)}_{(2m-2,\ldots,2d_r,2d_r-3,\ldots,1)} =
        {}^{\delta_{m-2}}Q^{(1-m,\ldots,\wh{1-d_r},\ldots,0)}_{\delta_{m-1}+\delta_{m-2}+1^{m-d_r}}.
\]
Finally, we use (\ref{Cwn}) and repeat the above calculation $r-1$ more
times to get
\[
\CS_{\wh{w}} = {}^{\delta_{n-1}}Q_{\delta_n+\delta_{n-1}+\xi}^{\rho}
\]
where 
\[
\rho = (1-m,\ldots,\wh{1-d_r},\ldots,\wh{1-d_1},\ldots,-1,0) = 
(1-w_n,\ldots,1-w_1)
\]
and 
\[
\xi = \sum_{j=1}^r1^{m-d_j-(r-j)} = \sum_{j=1}^r 1^{n+j-d_j} = 
\sum_{j=1}^r 1^{\la_j} = \la(w)'.
\]

Next consider the general case. Let $p>n$ and suppose that
\[
w = (a_1,\ldots,\wh{a}_{i_1},\ldots, \wh{a}_{i_{p-n}}, \ldots, a_p
,-a_{i_{p-n}},\ldots, -a_{i_1}, d_1,\ldots, d_r)
\]
where $r=m-p$, $0<a_1<\cdots<a_p$ and $0<d_1<\cdots<d_r$. If
\[
u:= (a_1,\ldots,a_p,d_1,\ldots, d_r)
\]
and $\wh{u}:=uw_0^{(p)}$, then 
\begin{equation}
\label{fact}
\wh{u}=\wh{w}v'_{p-n}\cdots v'_1
\end{equation}
where $v'_j = s_{p-j} \cdots s_{i_j-j+2}s_{i_j-j+1}$ for $1\leq j
\leq p-n$.  Now $\CS_{\wh{u}}$ is known by the previous case, and
\[
\CS_{\wh{w}} = \partial_{v'_{p-n}}\cdots\partial_{v'_1}(\CS_{\wh{u}}).
\]
The proof is now completed by induction, using Lemma \ref{Mcdlem}. The
key observation is the following: Suppose that
\[
u_0 = \wh{u} > u_1 > \cdots > u_d = \wh{w}
\]
is the sequence of coverings in the right weak Bruhat order
corresponding to the factorization (\ref{fact}), so that $u_{i+1} =
u_i s_{r_i}$ with $\ell(u_{i+1})=\ell(u_i)-1$ for each $i\in
[0,d-1]$. Then if $\gamma:=\gamma(u_i)$, we have $\gamma_{r_i+1} =
\gamma_{r_i}-1$. Therefore Lemma \ref{Mcdlem} implies that
$\gamma(u_{i+1})$ has two equal entries in positions $r_i$ and
$r_i+1$. Moreover, $\gamma(u_j)$ is a partition for all $j\in [0,d]$,
and hence $\nu(u_j)$ is the conjugate of $\gamma(u_j)$.
\end{proof}

\begin{remark}
  The work of Anderson and Fulton \cite{AF} associates a partition
  $\la$ to certain triples of $\ell$-tuples of integers which define a
  class of symplectic degeneracy loci. The shape $\la(w)$ of an
  element $w\in W_\infty$ in Definition \ref{codedef} (and its even
  orthogonal counterpart in Definition \ref{codedefD}) is consistent
  with op.\ cit. In particular, Propositions \ref{PfCprop} and
  \ref{PfDprop} follow from the more general formulas for double
  Schubert polynomials which are established in \cite{AF}. We give
  here an alternative proof, using \cite[Thm.\ 1.2]{IMN1} and the
  right divided difference operators.
\end{remark}

\subsection{Double theta polynomials and alternating sums}
Let $n\geq 0$ and $w \in W_\infty$ be an $n$-Grassmannian element. Let
$\la=\la(w)$ be the $n$-strict partition which corresponds to $w$, 
define a sequence $\beta(\la)=\{\beta_i(\la)\}_{i\geq 1}$ by 
\begin{equation}
\label{betaC}
\beta_i(\la):=\begin{cases} w_{n+i}+1 & \text{if $w_{n+i}<0$}, \\
w_{n+i} & \text{if $w_{n+i}>0$},
\end{cases}
\end{equation}
and a set of pairs $\cC(\la)$ by
\begin{equation}
\label{CC}
\cC(\la):=\{(i,j)\in \N\times\N \ |\ 1\leq i < j \ \ \text{and} \ \ 
w_{n+i}+w_{n+j}<0\}
\end{equation}
(this agrees with the set $\cC(\la)$ given in the introduction).
The {\em double theta polynomial} ${}^n\Ti_\la(X,Y)$ of
  \cite{TW, W} is defined by 
\begin{equation}
\label{Cdef}
{}^n\Ti_\la(X,Y) := 
\prod_{i<j} (1-R_{ij}) \prod_{(i,j)\in \cC(\la)}
(1+R_{ij})^{-1}\, ({}^nc)^{\beta(\la)}_\la.
\end{equation}
In the above formula, for any integer sequence
$\al=(\al_1,\al_2,\ldots)$, we let
$({}^nc)^{\be(\la)}_\al:=\prod_i{}^nc^{\be_i(\la)}_{\al_i}$, and the
raising operators $R_{ij}$ act by $R_{ij}
({}^nc)^{\be(\la)}_\al:=({}^nc)^{\be(\la)}_{R_{ij}\al}$. Note that
${}^n\Ti_\la(X,Y)$ lies in $\Gamma[X_n,Y]$ for any $n$-strict
partition $\la$. To be precise, the polynomial ${}^n\Ti_\la(X,Y)$ is
the image of the double theta polynomial $\Ti_\la(c \, |\, t)$ of
\cite{TW} (with $k=n$) in the ring $\Gamma[X_n,Y]$.

Let $\A:\Gamma[X_n,Y]\to \Gamma[X_n,Y]$ be the operator given by
\[
\A(f):=\sum_{w\in W_n}(-1)^{\ell(w)}w(f).
\]
Let $w_0=w_0^{(n)}$ denote the longest element in $W_n$ and set $\wh{w}:=ww_0$. 

\begin{thm}
\label{altCthm}
Let $\la$ be an $n$-strict partition and $w$ be the corresponding
$n$-Grassmannian element of $W_\infty$. Then we have
\begin{align}
\label{TtoQdoub}
{}^n\Ti_{\la}(X,Y) &= \partial_{w_0}\left({}^{\nu(\wh{w})}
Q^{\be(\wh{w})}_{\la(\wh{w})}\right) \\
\label{TtoQ2doub}
&=(-1)^{n(n+1)/2}\left. \A\left({}^{\nu(\wh{w})}
Q_{\la(\wh{w})}^{\be(\wh{w})}\right)\right\slash
\A\left(x^{\delta_n+\delta_{n-1}}\right).
\end{align}

\end{thm}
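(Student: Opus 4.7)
The plan is to deduce (\ref{TtoQdoub}) by applying $\partial_{w_0}$ to the Pfaffian formula of Proposition \ref{PfCprop}, and then to obtain (\ref{TtoQ2doub}) from the Bernstein--Gelfand--Gelfand formula for the longest divided difference in type $C_n$. Throughout, $w$ denotes the $n$-Grassmannian element of $W_\infty$ associated to the $n$-strict partition $\la$, and $\wh{w}=ww_0^{(n)}$.

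First, since $w$ is $n$-Grassmannian we have $\ell(ws_i)>\ell(w)$ for every $i\in\{0,1,\ldots,n-1\}$. The simple reflections $s_0,\ldots,s_{n-1}$ generate the finite parabolic $W_n\subset W_\infty$ whose longest element is $w_0=w_0^{(n)}$, so $w$ is a minimal length coset representative of $wW_n$. A standard Coxeter-group argument then gives the length additivity $\ell(\wh{w})=\ell(ww_0)=\ell(w)+\ell(w_0)$, and (\ref{Cstar}) produces $\partial_{w_0}\CS_{\wh{w}}(X,Y)=\CS_w(X,Y)$. Substituting the Pfaffian formula of Proposition \ref{PfCprop} for $\CS_{\wh{w}}$ and invoking the identification $\CS_w(X,Y)={}^n\Ti_{\la(w)}(X,Y)$ for $n$-Grassmannian $w$---the double analogue of the single-variable identification cited in the introduction, established in \cite{TW, W}---yields formula (\ref{TtoQdoub}).

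To pass from (\ref{TtoQdoub}) to (\ref{TtoQ2doub}), I would apply the BGG-type identity
\[
\partial_{w_0}(f)=(-1)^{n(n+1)/2}\,\frac{\A(f)}{\A(x^{\delta_n+\delta_{n-1}})}
\]
valid on $\Gamma[X_n,Y]$. This is a purely $W_n$-theoretic relation: the denominator $\A(x^{\delta_n+\delta_{n-1}})$ is (up to sign) the type $C_n$ Weyl denominator $2^n\,x_1\cdots x_n\prod_{i<j}(x_i^2-x_j^2)$, and the identity follows by iterating the single-reflection relation $\alpha_i\,\partial_i(f)=f-s_if$ along a reduced expression for $w_0$, combined with the computation of $\A(x^{\delta_n+\delta_{n-1}})$ via the Weyl character formula. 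Applying this identity to both sides of (\ref{TtoQdoub}) gives (\ref{TtoQ2doub}).

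The main obstacle is pinning down the precise sign $(-1)^{n(n+1)/2}$ in the BGG-type identity, which requires careful bookkeeping given the conventions for $\partial_0 f=(f-s_0f)/(-2x_1)$ and for the weight $\delta_n+\delta_{n-1}$ (interpreted as the $n$-tuple $(2n-1,2n-3,\ldots,3,1)$). This sign can be pinned down by induction on $n$ or by a direct comparison of $\prod_{\alpha>0}\alpha$ with $\A(x^{\delta_n+\delta_{n-1}})$. All remaining ingredients---the length additivity, the right divided difference rule (\ref{Cstar}), and the Pfaffian formula of Proposition \ref{PfCprop}---are imported directly from earlier in the paper.
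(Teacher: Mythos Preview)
Your proposal is correct and follows essentially the same route as the paper: deduce (\ref{TtoQdoub}) from $\partial_{w_0}\CS_{\wh{w}}=\CS_w$ together with Proposition~\ref{PfCprop} and the identification $\CS_w={}^n\Ti_{\la}$, then derive (\ref{TtoQ2doub}) via the BGG formula expressing $\partial_{w_0}$ as $(-1)^{n(n+1)/2}\A(\cdot)/\A(x^{\delta_n+\delta_{n-1}})$. The only minor point is that the equality $\CS_w(X,Y)={}^n\Ti_\la(X,Y)$ is attributed in the paper to \cite[Thm.~1.2]{IM} rather than to \cite{TW,W} (which give the definition of the double theta polynomial), and the paper pins down the sign and denominator by citing \cite[Lemma~4]{D} and \cite[Prop.~5.5, Cor.~5.6(ii)]{PR} rather than rederiving them.
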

\begin{proof}
We deduce from (\ref{ddCeqinit}) that 
the double Schubert polynomial $\CS_w(X,Y)$ satisfies
\begin{equation}
\label{CtoC}
\CS_w(X,Y)=\partial_{w_0} \left(\CS_{\wh{w}}(X,Y)\right).
\end{equation}
The equality (\ref{TtoQdoub}) follows from (\ref{CtoC}), Proposition
\ref{PfCprop}, and the fact, proved in \cite[Thm.\ 1.2]{IM}, that 
$\CS_w(X,Y) = {}^n\Ti_{\la}(X,Y)$ in the ring $\Gamma[X_n,Y]$.

To establish the equality (\ref{TtoQ2doub}), recall from \cite[Lemma
  4]{D} and \cite[Prop.\ 5.5]{PR} that we have
\[
\partial_{w_0}(f) = (-1)^{n(n+1)/2}
\left(2^nx_1\cdots x_n\prod_{1\leq i < j\leq n}(x_i^2-x_j^2)
\right)^{-1} \cdot \A(f).
\]
On the other hand, it follows from \cite[Cor.\ 5.6(ii)]{PR} that
\[
\partial_{w_0}(x^{\delta_n+\delta_{n-1}}) = (-1)^{n(n+1)/2}
\] 
and hence that 
\[
\A(x^{\delta_n+\delta_{n-1}}) = 2^nx_1\cdots x_n\prod_{1\leq i < j\leq n}(x_i^2-x_j^2).
\]
The proof of (\ref{TtoQ2doub}) is completed by using these two
equations in (\ref{TtoQdoub}).
\end{proof}

\section{Single Schubert polynomials of type C}
\label{sspC}

In this section, we work with the single type C Schubert polynomials
$\CS_w(X)$. The entire section is inspired by \cite{LS1, LS2, M2} and
\cite{PR, LP1}.

\subsection{Theta polynomials as Weyl group invariants}
Let $\chi:\Gamma[X_n]\to \Z$ be the homomorphism defined by 
$\chi(c_p)=\chi(x_j)=0$ for all $p,j$. In other words, $\chi(f)$ is 
the constant term of $f$, for each polynomial $f\in \Gamma[X_n]$.

\begin{prop}
\label{etaprop}
For any $f\in \Gamma[X_n]$, we have 
$f=\sum_{w\in W^{(n)}}\chi(\partial_wf)\CS_w(X_n)$.
\end{prop}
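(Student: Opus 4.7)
My plan is to expand $f$ in the $\Z$-basis $\{\CS_w(X)\}_{w\in W^{(n)}}$ of $\Gamma[X_n]$ furnished by Proposition \ref{basisprop}, and then extract each coefficient by applying a divided difference operator $\partial_u$ followed by the constant-term homomorphism $\chi$. This mirrors the classical type A argument in \cite{LS2, M2}: divided differences are dual to the Schubert basis under the pairing given by the constant term.

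Concretely, I would write $f = \sum_{w \in W^{(n)}} a_w \CS_w(X)$ uniquely with $a_w \in \Z$. For each fixed $u \in W^{(n)}$, I apply $\partial_u$ term by term. The single-variable specialization of (\ref{Cstar}) (obtained by setting $Y=0$) gives $\partial_u \CS_w(X) = \CS_{wu^{-1}}(X)$ when $\ell(wu^{-1}) = \ell(w) - \ell(u)$ and zero otherwise, hence
\[
\partial_u f = \sum_{\substack{w \in W^{(n)} \\ \ell(wu^{-1}) = \ell(w) - \ell(u)}} a_w \,\CS_{wu^{-1}}(X).
\]
Now I apply $\chi$. Since $\CS_1(X) = 1$ while any $\CS_v(X)$ with $v \neq 1$ is homogeneous of positive degree $\ell(v)$, its constant term vanishes; so $\chi(\CS_{wu^{-1}}(X)) = \delta_{wu^{-1},1}$. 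The only surviving summand is $w=u$ (for which the length condition $\ell(1) = \ell(u)-\ell(u)=0$ is automatic), yielding $\chi(\partial_u f) = a_u$ and proving the formula.

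The only point to check is that everything in sight is well-defined. Since $\Gamma[X_n] \subset \Gamma[X]$ and each $\partial_i$ acts on all of $\Gamma[X]$ (where the single-variable analogue of (\ref{Cstar}) holds verbatim), I can regard $\chi$ as the constant-term evaluation on $\Gamma[X]$; the computation above then goes through regardless of whether intermediate terms $\CS_{wu^{-1}}(X)$ lie in $\Gamma[X_n]$. I do not anticipate a serious obstacle here: the argument is essentially formal, relying only on Proposition \ref{basisprop} for uniqueness of the expansion, the single-variable form of (\ref{Cstar}) to evaluate $\partial_u \CS_w(X)$, and the elementary vanishing of constant terms of non-identity Schubert polynomials.
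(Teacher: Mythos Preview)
Your proposal is correct and follows essentially the same approach as the paper: both arguments use Proposition~\ref{basisprop} to reduce to the Schubert basis and then invoke the single-variable form of (\ref{Cstar}) together with $\chi(\CS_v)=\delta_{v,1}$ to identify the coefficients. The paper phrases this as ``by linearity reduce to $f=\CS_v$'' while you expand $f$ and extract coefficients, but the content is identical; your explicit remark about extending $\chi$ to $\Gamma[X]$ to handle intermediate terms $\CS_{wu^{-1}}(X)$ is a point the paper leaves implicit.
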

\begin{proof}
By Proposition \ref{basisprop} and linearity, it is only necessary to
verify this when $f$ is a Schubert polynomial $\CS_v(X_n)$, $v\in
W^{(n)}$.  In this case, it follows from the properties of Schubert
polynomials in \S \ref{Cprelims} that $\chi(\partial_w(\CS_v(X_n)))$
is equal to $1$ when $w=v$ and equal to zero, otherwise.
\end{proof}

Following \cite[\S 5.1]{BKT1}, let
\[
\Gamma^{(n)}:= \Z[{}^nc_1, {}^nc_2,\ldots]
\]
be the ring of theta polynomials of level $n$. Notice that the
elements denoted by $\ti_r(x\,;y)$ in loc.\ cit.\
correspond to the generators ${}^nc_r$ here. According to
\cite[Thm.\ 2]{BKT1}, the single theta polynomials
${}^n\Ti_\la={}^n\Ti_\la(X)$ for all $n$-strict partitions $\la$ form
a $\Z$-basis of $\Gamma^{(n)}$. In the next result, the Weyl group
$W_n$ acts on the ring $\Gamma[X_n]$ in the usual way.

\begin{prop}
\label{invlem}
  The ring $\Gamma^{(n)}$ is equal to the subring $\Gamma[X_n]^{W_n}$ of
  $W_n$-invariants in $\Gamma[X_n]$.
\end{prop}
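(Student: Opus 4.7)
The plan is to prove two inclusions. For the easier direction $\Gamma^{(n)} \subseteq \Gamma[X_n]^{W_n}$, it suffices to verify that each generator ${}^nc_p$ is fixed by $W_n$. I would work from the generating-function identity $\sum_{p\geq 0} {}^nc_p\, t^p = C(t)\prod_{j=1}^n (1+x_jt)$, where $C(t):=\sum_p c_p\, t^p$ (this is (\ref{genfuneq}) with $Y_n=0$). Invariance under $s_i$ for $1\leq i\leq n-1$ is immediate, since such $s_i$ fixes $C(t)$ and preserves the symmetric polynomial $\prod_{j=1}^n (1+x_jt)$. For $s_0$, the defining relation (\ref{equiva}) rewrites as $s_0(C(t)) = C(t)\cdot \tfrac{1+x_1t}{1-x_1t}$, and since $s_0(x_1) = -x_1$ while the other $x_j$ are fixed, the factor $(1+x_1t)$ in the product becomes $(1-x_1t)$ and exactly cancels the new denominator; hence $s_0$ fixes each ${}^nc_p$.

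For the reverse inclusion I would take $f \in \Gamma[X_n]^{W_n}$ and apply Proposition~\ref{etaprop} to expand
\[
f = \sum_{w\in W^{(n)}} \chi(\partial_w f)\, \CS_w(X_n).
\]
The $W_n$-invariance of $f$ gives $\partial_i f = 0$ for every $i$ with $0 \leq i \leq n-1$. Applying $\partial_i$ term-by-term to the expansion and using (\ref{ddCeqinit}) in its single-variable form expresses $\partial_i f$ as an integer linear combination of polynomials $\CS_{ws_i}(X_n)$ indexed by those $w\in W^{(n)}$ with $\ell(ws_i) < \ell(w)$. Since $s_i$ with $i < n$ does not affect the tail of a signed permutation, these indices $ws_i$ still lie in $W^{(n)}$, so Proposition~\ref{basisprop} forces $\chi(\partial_w f) = 0$ for every such $w$. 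Running this across all $i \in [0,n-1]$, the only surviving terms in the expansion of $f$ are those with $w$ an $n$-Grassmannian element.

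To conclude, I would invoke the fact (from \cite{BKT1}, as recalled just before the statement) that for each $n$-Grassmannian $w$ with associated $n$-strict partition $\la(w)$, the Billey--Haiman Schubert polynomial $\CS_w(X_n)$ coincides with the single theta polynomial ${}^n\Ti_{\la(w)}$, and that these theta polynomials form a $\Z$-basis of $\Gamma^{(n)}$. Hence $f \in \Gamma^{(n)}$, completing the proof. The only step involving an actual computation is the $s_0$-invariance of the generators, which collapses in one line via the generating-function manipulation above; the rest of the argument is essentially an extraction of descent-vanishing from the basis expansion, so I do not anticipate any substantial obstacle.
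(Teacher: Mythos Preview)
Your proof is correct and follows essentially the same approach as the paper: expand an invariant $f$ in the Schubert basis of $\Gamma[X_n]$, apply each $\partial_i$ for $i\in[0,n-1]$ to kill the non-Grassmannian terms, and then identify the surviving $\CS_w(X_n)$ with theta polynomials via \cite{BKT1}. The only cosmetic differences are that you invoke Proposition~\ref{etaprop} (with explicit coefficients) where the paper uses Proposition~\ref{basisprop} directly, and you spell out the $s_0$-invariance of the generators via the generating-function identity where the paper simply asserts that the converse is clear.
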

\begin{proof}
  We have $g\in \Gamma[X_n]^{W_n}$ if and only if $s_ig=g$ for all
  $i\in [0,n-1]$ if and only if $\partial_ig=0$ for
  $0\leq i \leq n-1$. Suppose that $f\in \Gamma[X_n]^{W_n}$ and employ
  Proposition \ref{basisprop} to write
\begin{equation}
\label{symmeq}
f(X_n) = \sum_{w\in W^{(n)}} a_w\CS_w(X_n).
\end{equation}
Applying the divided differences $\partial_i$ for $i\in [0,n-1]$ to
(\ref{symmeq}) and using (\ref{ddCeqinit}), we deduce that $a_w= 0$
for all $w\in W^{(n)}$ such that $\ell(ws_i)<\ell(w)$ for some
$i\in [0,n-1]$.  Therefore, $f$ is in the $\Z$-span of those
$\CS_w(X_n)$ for $w\in W^{(n)}$ with $\ell(ws_i)>\ell(w)$ for all
$i\in [0,n-1]$.  These are exactly the $n$-Grassmannian elements $w$
in $W_\infty$.  According to \cite[Prop.\ 6.2]{BKT1}, for any such
$w$, we have $\CS_w(X_n)={}^n\Ti_{\la(w)}(X)$ in $\Gamma[X_n]$.  It
follows that $f$ is a $\Z$-linear combination of theta polynomials of
level $n$, and hence that $f\in \Gamma^{(n)}$.  The converse is clear,
since $\partial_ih=0$ for all $i\in [0,n-1]$ and 
$h\in \Gamma^{(n)}$.
\end{proof}

\begin{example}
\label{Cex}
It follows from Proposition \ref{invlem} that
\[
\Gamma^{(n)}\cap \Z[X_n] = \Z[X_n]^{W_n} = \Z[e_1(X^2_n),\ldots,e_n(X^2_n)]
\]
where $X_n^2:=(x_1^2,\ldots,x_n^2)$. This can also be seen directly, using
the identities
\[
({}^nc_p)^2+2\sum_{i=1}^p(-1)^i\,({}^nc_{p+i})({}^nc_{p-i}) = e_p(X_n^2)
\]
for all $p\geq 0$ (compare with \cite[Eqn.\ (19)]{BKT1}).
\end{example}

Let $\IGam^{(n)}= \langle {}^nc_1, {}^nc_2,\ldots \rangle$ be the
ideal of $\Gamma[X_n]$ generated by the homogeneous elements in
$\Gamma^{(n)}$ of positive degree. For any parabolic subgroup $P$ of
$\Sp_{2n}$, let $\IGam^{(n)}_P$ be the corresponding ideal of
$\Gamma[X_n]^{W_P}$, and set $W_n^P:=W^P\cap W_n$. The following
result about the cohomology ring of $\Sp_{2n}/P$ is an immediate
consequence of Theorem \ref{Cthm}, Corollary \ref{GPcor} and the
discussion in \S \ref{dspC}.

\begin{cor}
\label{prescor}
There is a canonical ring isomorphism 
\[
\HH^*(\Sp_{2n}/B) \cong \Gamma[X_n]/\IGam^{(n)}
\]
which maps the cohomology class of the codimension $\ell(w)$ Schubert
variety $X_w$ to the class of the Schubert polynomial $\CS_w(X)$, for
any $w\in W_n$. Moreover, for any parabolic subgroup $P$ of
$\Sp_{2n}$, there is a canonical ring isomorphism
\[
\HH^*(\Sp_{2n}/P) \cong \Gamma[X_n]^{W_P}/\IGam^{(n)}_P
\]
which maps the cohomology class of the codimension $\ell(w)$ Schubert
variety $X_w$ to the class of the Schubert polynomial $\CS_w(X)$, for
any $w\in W^P_n$.
\end{cor}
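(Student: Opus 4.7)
The plan is to derive this corollary by specializing the equivariant statements of Theorem \ref{Cthm} and Corollary \ref{GPcor} from the $T$-equivariant to the ordinary cohomology ring via the augmentation map $\Z[Y_n]\to\Z$ sending each $y_i$ to zero. Geometrically, this augmentation corresponds to the natural map $\HH_T^*(\M_n)\to \HH^*(\M_n)$; since $\M_n$ admits a $T$-stable cell decomposition by Schubert cells, the equivariant Schubert classes $[X_w]^T$ form a $\Z[Y_n]$-basis of $\HH_T^*(\M_n)$ which specializes to the basis of ordinary Schubert classes $[X_w]$, and the kernel of this specialization is precisely the ideal generated by $y_1,\ldots,y_n$.

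Next I would verify that the algebraic side is compatible with this specialization. Inspecting the generating function (\ref{genfuneq}), setting $y_1=\cdots=y_n=0$ collapses the right hand side to $(\sum_p c_p t^p)\prod_{j=1}^n(1+x_j t)=\sum_p {}^nc_p\, t^p$, so ${}^nc_p^n$ restricts to ${}^nc_p$ and hence $\wh{\Gamma}^{(n)}$ restricts to $\Gamma^{(n)}$ and $\wh{\IGam}^{(n)}$ restricts to $\IGam^{(n)}$. Moreover $\CS'_w(X_n,0)=\CS_w(X)$ by the definition of $\CS'_w$ and the fact that $\CS_w(X)=\CS_w(X,0)$. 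Passing to the quotient $(y_1,\ldots,y_n)$ on both sides of the isomorphism in Theorem \ref{Cthm} therefore produces the desired canonical ring isomorphism
\[
\HH^*(\Sp_{2n}/B)\cong \Gamma[X_n]/\IGam^{(n)}
\]
with $[X_w]\mapsto \CS_w(X)$ for all $w\in W_n$.

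For the parabolic statement, I would run exactly the same specialization argument on the isomorphism of Corollary \ref{GPcor}. The action of $W_n$ on $\Gamma[X_n,Y_n]$ fixes every $y_j$, so taking $W_P$-invariants commutes with quotienting by $(y_1,\ldots,y_n)$, yielding $\Gamma[X_n,Y_n]^{W_P}/(y_1,\ldots,y_n)=\Gamma[X_n]^{W_P}$. The image of $\wh{\IGam}^{(n)}_P$ under this specialization is $\IGam^{(n)}_P$ by the same generating function computation as above, so the parabolic statement follows at once.

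The proof is essentially routine once Theorem \ref{Cthm} and Corollary \ref{GPcor} are in hand; no real obstacle remains. The only technical point that deserves explicit mention is the compatibility between specialization $y_j\mapsto 0$ on the algebraic side and the forgetful map on equivariant cohomology on the geometric side, which is standard given the existence of an equivariant Schubert basis.
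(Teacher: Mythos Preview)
Your proof is correct and follows essentially the same approach as the paper, which simply declares the corollary to be an immediate consequence of Theorem~\ref{Cthm}, Corollary~\ref{GPcor}, and the discussion in \S\ref{dspC}. You have merely spelled out the routine specialization $y_j\mapsto 0$ and its compatibility with the forgetful map $\HH_T^*\to\HH^*$ that the paper leaves implicit.
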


\begin{example}
The version of Lemma \ref{Qprop} for single polynomials states that if
$\la$ is a strict partition of length $\ell$ and $p>\max(n,\la_1)$,
then $Q_{(p,\la)}\in \IGam^{(n)}$.  We can exhibit this containment more
explicitly as follows. For any integer $m>n$, we have
\[
c_m=\sum_{j=1}^{\infty}(-1)^{j-1}\,c_{m-j} {}^nc_j.
\]
This implies that for any integer vector $\al=(\al_1,\ldots,\al_\ell)$, 
the equality
\[
c_{(m,\al)}=\sum_{j=1}^\infty(-1)^{j-1}\,c_{(m-j,\al)} {}^nc_j
\]
holds, and therefore, by applying the Pfaffian operator $R^\infty$, that 
\begin{equation}
\label{Qpla}
Q_{(p,\la)}=\sum_{j=1}^\infty(-1)^{j-1}Q_{(p-j,\la)} {}^nc_j.
\end{equation}
It is important to notice that the terms $Q_{(p-j,\la)}$ in
(\ref{Qpla}) can be non-zero even when $p-j<0$. The `straightening law'
for such terms was found by Hoffman and Humphreys. For any integer $k$, let
$n(k):=\#\{i\ |\ \la_i>|k|\}$, and define the sets
\[
A_\la:=\{r\in [0,p-1]\ |\ r\neq \la_i \text{ for all $i\leq \ell$}\} \quad
\text{and} \quad 
B_\la:=\{\la_1,\ldots,\la_\ell\}.
\]
It follows from \cite[Thm.\ 9.2]{HH} that for any integer $k<p$, we have
\[
Q_{(k,\la)} = \begin{cases} (-1)^{n(k)} \,Q_{\la\cup k} & \text{if
    $k\in A_\la$}, \\ (-1)^{k+n(k)} \, 2\,Q_{\la\ssm |k|} & \text{if
    $|k|\in B_\la$}, \\ 0 & \text{otherwise},
\end{cases}
\]
where $\la\cup k$ and $\la\ssm |k|$ denote the partitions obtained by
adding (resp.\ removing) a part equal to $k$ (resp.\ $|k|$)
from $\la$.  Applying this in (\ref{Qpla}), we obtain
\begin{equation}
\label{Qneg}
Q_{(p,\la)}=\sum_{r\in A_\la}(-1)^{p-1-r+n(r)} \,Q_{\la\cup r} {}^nc_{p-r} 
+ 2\sum_{r\in B_\la} (-1)^{p-1+n(r)}\,Q_{\la\ssm r} {}^nc_{p+r}.
\end{equation}
The particular case of (\ref{Qneg}) when $(p,\la)=\delta_{n+1}$ reads
\[
Q_{\delta_{n+1}} = Q_{\delta_{n}}  {}^nc_{n+1} + 2
\sum_{r=1}^n(-1)^r\, Q_{\delta_n\ssm r} {}^nc_{n+1+r}.
\]
\end{example}

\subsection{The ring $\Gamma[X_n]$ as a $\Gamma^{(n)}$-module}
Set $e_p:=e_p(X_n)$ for each $p\in \Z$, and recall that
$e_\al:=\prod_ie_{\al_i}$ for any integer sequence $\al$.  Let $\cP_n$
denote the set of all strict partitions $\la$ with $\la_1\leq n$.

\begin{prop}
\label{freemod}
$\Gamma[X_n]$ is a free $\Gamma^{(n)}$-module of rank $2^nn!$ with basis
\[
\{e_\la(-X_n)x^\al\ |\ \la \in \cP_n, 
\ \ 0\leq \al_i \leq n-i, \ i\in [1,n]\}.
\]
\end{prop}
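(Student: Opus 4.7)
The plan is to leverage Corollary \ref{basiscor}, which asserts that $\Gamma[X_n]$ is a free $\Gamma^{(n)}$-module of rank $2^nn!$ with the Schubert basis $\{\CS_w(X)\}_{w\in W_n}$. Since the proposed set $B:=\{e_\la(-X_n)x^\al : \la\in\cP_n,\ 0\leq\al_i\leq n-i\}$ has cardinality $2^n\cdot n!$, matching this rank, it suffices to prove (i) that $B$ generates $\Gamma[X_n]$ over $\Gamma^{(n)}$, and (ii) a Hilbert series equality, from which the conclusion that $B$ is a free basis will follow by a graded Nakayama-type argument.

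For the spanning (i), I would first invoke the classical result that $\Z[X_n]$ itself is a free $\Z[X_n]^{W_n}$-module with basis $B$. This can be established by combining Artin's theorem (giving the Artin monomial basis $\{x^\al\}$ of $\Z[X_n]$ over $\Lambda_n$) with the hyperoctahedral reflection-group decomposition $\Lambda_n=\bigoplus_{\la\in\cP_n}\Z[X_n]^{W_n}\cdot e_\la(-X_n)$; the latter follows from the quadratic identities (of the type appearing in Example \ref{Cex}) expressing each $e_k(X_n^2)$ in terms of the $e_j(X_n)$, which inductively reduce any monomial $e^\beta$ to a squarefree one modulo $\Z[X_n]^{W_n}$. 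Separately, the single-variable form of \eqref{genfuneq} reads
\[
c_p\,=\,\sum_{j=0}^p {}^nc_{p-j}\,h_j(-X_n),
\]
with ${}^nc_{p-j}\in\Gamma^{(n)}$ and $h_j(-X_n)\in\Lambda_n$, so $\Gamma[X_n]=\Gamma^{(n)}\cdot\Z[X_n]$. Since $\Z[X_n]^{W_n}\subset\Gamma^{(n)}$, these two facts combine to yield $\Gamma[X_n]=\sum_{b\in B}\Gamma^{(n)}\cdot b$.

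For (ii), the Hilbert series of $\Gamma[X_n]$ over $\Gamma^{(n)}$ is $H_{\Gamma^{(n)}}(t)\cdot P_{W_n}(t)$ by Corollary \ref{basiscor}, where $P_{W_n}(t)=\prod_{i=1}^n(1+t+\cdots+t^{2i-1})$ is the Poincar\'e polynomial of $W_n$. The degree-generating function of $B$ factors as
\[
\sum_{b\in B} t^{\deg b}\,=\,\prod_{i=1}^n(1+t^i)\cdot\prod_{i=1}^n(1+t+\cdots+t^{n-i})\,=\,P_{W_n}(t),
\]
using the elementary identity $(1+t^i)(1+t+\cdots+t^{i-1})=1+t+\cdots+t^{2i-1}$. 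Hence the graded surjection $\bigoplus_{b\in B}\Gamma^{(n)}(-\deg b)\twoheadrightarrow\Gamma[X_n]$ produced by (i) is a map of graded $\Gamma^{(n)}$-modules with equal Hilbert series; in each graded degree it becomes a surjection between finitely generated torsion-free abelian groups of equal rank, and is therefore an isomorphism. Summing over degrees gives that $B$ is a free $\Gamma^{(n)}$-basis of $\Gamma[X_n]$, as desired.

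The main obstacle is the invariant-theoretic lemma that $\Lambda_n$ is free over $\Z[X_n]^{W_n}$ with basis $\{e_\la(-X_n)\}_{\la\in\cP_n}$, used in (i); once this is in hand, the remainder is bookkeeping with Hilbert series.
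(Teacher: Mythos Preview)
Your argument is circular: you invoke Corollary \ref{basiscor} (the Schubert basis of $\Gamma[X_n]$ over $\Gamma^{(n)}$) to prove Proposition \ref{freemod}, but in the paper Corollary \ref{basiscor} is a \emph{consequence} of Proposition \ref{freemod}. The Hilbert series equality in your step (ii) is exactly what is at stake; you cannot import it from a result that logically sits downstream. Without that equality, your graded surjection argument collapses, since you have no independent control on the rank of the target in each degree.

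The paper's proof avoids Hilbert series entirely. It proceeds in two layers, as you do in (i): first $\Gamma[X_n]$ is free over $\Gamma[e_1,\ldots,e_n]$ on the Artin monomials $\{x^\al\}$, and then $\Gamma[e_1,\ldots,e_n]$ is shown to be free over $\Gamma^{(n)}$ on $\{e_\la(-X_n):\la\in\cP_n\}$. Spanning for the second layer is obtained from the quadratic identities $e_p^2(-X_n)+2\sum_i(-1)^ie_{p+i}(-X_n)e_{p-i}(-X_n)\in\Gamma^{(n)}$, much as you suggest. The crucial difference is linear independence: rather than counting, the paper proves directly that the Schubert polynomials $\{\CS_w(X_n)\}_{w\in W_n}$ are $\Gamma^{(n)}$-linearly independent by applying the divided difference $\partial_v$ (which is $\Gamma^{(n)}$-linear) to a hypothetical relation and picking off the top term. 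This independence is then transferred to $\{Q_\la\}_{\la\in\cP_n}$, to $\{c_\la\}_{\la\in\cP_n}$, and finally to $\{e_\la(-X_n)\}_{\la\in\cP_n}$ via unitriangular changes of basis. Your spanning argument (i) is essentially correct and close to the paper's; to repair your proof, replace the Hilbert series step (ii) with this direct divided-difference argument for linear independence.
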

\begin{proof}
It is well known (see e.g \cite[(5.1${}'$)]{M2}) that $\Gamma[X_n]$ is
a free $\Gamma[e_1,\ldots,e_n]$-module with basis given by the
monomials $x^\al$ with $0\leq \al_i \leq n-i$ for $i\in [1,n]$. It
will therefore suffice to show that $\Gamma[e_1,\ldots,e_n]$ is a free
$\Gamma^{(n)}$-module with basis $e_\la(-X_n)$ for $\la\in
\cP_n$. Setting $y_j=0$ for $1\leq j\leq n$ in equation (\ref{tqeh})
gives
\[
E(X_n,t):=\sum_{p=0}^\infty e_p t^p=
\left(\sum_{p=0}^\infty {}^nc_p\,t^p\right)\left(\sum_{p=0}^\infty c_p(-t)^p\right).
\]
Using this and the relations (\ref{basicrels}), we obtain
\[
E(X_n,t)E(X_n,-t)=\left(\sum_{p=0}^\infty {}^nc_pt^r\right)
\left(\sum_{p=0}^\infty {}^nc_p(-t)^p\right)
\]
and therefore that
\[
e_p^2(-X_n) + 2\sum_{i=1}^p(-1)^ie_{p+i}(-X_n)e_{p-i}(-X_n) \in \Gamma^{(n)}
\]
for each $p \geq 1$.  It follows that the monomials $e_\la(-X_n)$ for
$\la\in\cP_n$ generate $\Gamma[e_1,\ldots,e_n]$ as a
$\Gamma^{(n)}$-module.  It remains to prove that these monomials
$e_\la(-X_n)$ are linearly independent over $\Gamma^{(n)}$.

We claim that the Schubert polynomials $\CS_w(X_n)$ for $w\in W_n$ are
linearly independent over $\Gamma^{(n)}$. Indeed, suppose that
\[
\sum_{w\in  W_n} f_w \CS_w(X_n) = 0 
\]
for some coefficients $f_w\in \Gamma^{(n)}$, and that $v\in W_n$ is an
element of maximal length such that $f_v\neq 0$. Then, by applying
(\ref{Cstar}), we have
\[
0 = \partial_v\left(\sum_{w\in  W_n} f_w \CS_w(X_n) \right) 
= f_v \partial_v(\CS_v(X_n)) = f_v,
\]
which is a contradiction, proving the claim. We have used here the
fact that the divided differences $\partial_i$ are
$\Gamma^{(n)}$-linear for each $i\in [0,n-1]$.

It follows that the Schur $Q$-polynomials $Q_\la=Q_\la(c)$ for $\la\in
\cP_n$ are linearly independent over $\Gamma^{(n)}$ (since these
are exactly the Schubert polynomials $\CS_w(X_n)$ which lie in $\Gamma$, 
with $w=w_\la\in W_n$). But the elements $\{Q_\la\}$ and $\{c_\la\}$ for 
$\la \in \cP_n$ are
related by an unitriangular change of basis matrix, and so are the 
elements $\{c_\la\}$ and $\{e_\la(-X_n)\}$. It follows that
the $Q_\la$ for $\la\in \cP_n$ generate
$\Gamma[e_1,\ldots,e_n]$ as a $\Gamma^{(n)}$-module, and hence that
the three aforementioned sets each form a basis. 
\end{proof}

Following \cite{PR}, for any partition $\la$, the $\wt{Q}$-polynomial 
is defined by
\begin{equation}
\label{Qtoe}
\wt{Q}_\la(X_n):=R^\infty\,e_\la(X_n).
\end{equation}

\begin{cor}
\label{basiscor}
The ring $\Gamma[X_n]$ is a free $\Gamma[X_n]^{S_n}$-module with basis
$\{\AS_\om(X)\}$ for $\om\in S_n$. The ring $\Gamma[X_n]^{S_n}$ is a
free $\Gamma^{(n)}$-module with basis $\{\wt{Q}_\la(-X_n)\}$ for
$\la\in\cP_n$.  The ring $\Gamma[X_n]$ is a free $\Gamma^{(n)}$-module
on the basis $\{\CS_w(X_n)\}$ of single type C Schubert polynomials
for $w\in W_n$, and is also free on the product basis
$\{\wt{Q}_\la(-X_n)\AS_\om(X)\}$ for $\la\in\cP_n$ and $\om\in S_n$.
\end{cor}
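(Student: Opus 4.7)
The plan is to establish the four freeness claims of Corollary \ref{basiscor} in sequence, using Proposition \ref{freemod} and its proof, the classical Lascoux--Sch\"utzenberger theorem in type A, and a graded Nakayama argument via Corollary \ref{prescor}. First I would deduce that $\Gamma[X_n]$ is free over $\Gamma[X_n]^{S_n}$ on $\{\AS_\om(X)\}_{\om\in S_n}$ by base change from the classical result, which gives $\Z[X_n]$ free over $\Lambda_n$ on this Schubert basis; since $\Gamma$ is a free $\Z$-module (on the Schur $P$-basis indexed by strict partitions), tensoring with $\Gamma$ over $\Z$ preserves freeness, and $\Gamma\otimes_\Z\Lambda_n = \Gamma[X_n]^{S_n}$.

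Next, for the $\{\wt{Q}_\la(-X_n)\}_{\la\in\cP_n}$-basis of $\Gamma[X_n]^{S_n} = \Gamma[e_1,\ldots,e_n]$ over $\Gamma^{(n)}$, I would start from the proof of Proposition \ref{freemod}, which already exhibits $\{Q_\la\}_{\la\in\cP_n}$ (and $\{e_\la(-X_n)\}$) as bases. The congruence $Q_\la \equiv \wt{Q}_\la(-X_n) \pmod{\IGam^{(n)}}$, obtained by specializing the equivalence $Q_\la \equiv (-1)^{|\la|}\wt{Q}_\la(X_n/Y_n)$ stated after Theorem \ref{Cthm} at $Y_n = 0$ and using $\wt{Q}_\la(-X_n) = (-1)^{|\la|}\wt{Q}_\la(X_n)$ (since raising operators preserve total degree), shows that $\wt{Q}_\la(-X_n)$ and $Q_\la$ map to the same element in $\Gamma[e_1,\ldots,e_n]/\IGam^{(n)}$. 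A graded Nakayama argument then yields spanning, and since the two sets have identical degree multisets $\{|\la|:\la\in\cP_n\}$, a Hilbert-series comparison promotes this to linear independence. Tensoring this over $\Gamma[X_n]^{S_n}$ with the first basis result immediately produces the product basis $\{\wt{Q}_\la(-X_n)\AS_\om(X)\}_{\la,\om}$ for $\Gamma[X_n]$ over $\Gamma^{(n)}$, of rank $|\cP_n|\cdot n! = 2^n n!$.

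For the single Schubert-polynomial basis $\{\CS_w(X_n)\}_{w\in W_n}$, linear independence over $\Gamma^{(n)}$ is exactly the maximal-length divided-difference argument already carried out inside the proof of Proposition \ref{freemod}. For spanning I would invoke Corollary \ref{prescor}: the images $\overline{\CS_w(X_n)}$ for $w\in W_n$ form a $\Z$-basis of $\Gamma[X_n]/\IGam^{(n)} \cong \HH^*(\Sp_{2n}/B)$, so $\Gamma[X_n] = \sum_{w\in W_n}\Gamma^{(n)}\CS_w(X_n) + \IGam^{(n)}\Gamma[X_n]$. A standard graded Nakayama argument (valid because $\Gamma^{(n)}$ is an $\N$-graded ring with degree-zero part $\Z$, and $\Gamma[X_n]$ is a nonnegatively graded $\Gamma^{(n)}$-module of finite rank in each degree) then upgrades this to equality $\Gamma[X_n] = \sum_{w\in W_n}\Gamma^{(n)}\CS_w(X_n)$, yielding the final basis. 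I expect the main technical subtlety to lie in the second paragraph, tracking the signs and $Y_n=0$ specialization that identify $\wt{Q}_\la(-X_n)$ with $Q_\la$ modulo $\IGam^{(n)}$; once that congruence is in hand, everything else is either a direct appeal to earlier propositions, a tensor-product move, or the standard graded Nakayama application.
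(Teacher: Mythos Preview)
Your proposal is correct and essentially follows the paper's approach, just with considerably more detail than the paper's very terse proof. The one noticeable difference is in the $\wt{Q}_\la(-X_n)$ claim: the paper simply cites Proposition~\ref{freemod} together with the defining equation~(\ref{Qtoe}), the intended point being that $\wt{Q}_\la(-X_n)=R^\infty e_\la(-X_n)$ stands in the same (unitriangular over $\Gamma^{(n)}$) relation to the known basis $\{e_\la(-X_n)\}_{\la\in\cP_n}$ as $Q_\la=R^\infty c_\la$ does to $\{c_\la\}_{\la\in\cP_n}$ in the proof of Proposition~\ref{freemod}. Your congruence $Q_\la\equiv\wt{Q}_\la(-X_n)\pmod{\IGam^{(n)}}$ followed by graded Nakayama is a valid alternative route --- and, once one unpacks what ``unitriangular over $\Gamma^{(n)}$'' means in a graded setting, it is really the same argument --- but the paper's one-line parallel is shorter. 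Likewise, for the $\{\CS_w\}_{w\in W_n}$ basis the paper merely says the claim is ``clear''; your Nakayama argument via Corollary~\ref{prescor} is a reasonable way to make that precise, though one could also argue directly that $2^nn!$ homogeneous elements, linearly independent over the graded domain $\Gamma^{(n)}$, in a free module of rank $2^nn!$ must form a basis.
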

\begin{proof}
Since $\Gamma[X_n]^{S_n} = \Gamma[e_1,\ldots,e_n]$, the first
statement follows from Proposition \ref{freemod} and
\cite[(4.11)]{M2}. The assertions involving the polynomials
$\wt{Q}_\la(-X_n)$ are justified using Proposition \ref{freemod} and
equation (\ref{Qtoe}), and the fact that the Schubert polynomials
$\{\CS_w(X_n)\}$ for $w\in W_n$ form a basis is also clear.
\end{proof}

\subsection{A scalar product on $\Gamma[X_n]$}
Recall that $w_0=(\ov{1},\ldots,\ov{n})$ denotes the element of
longest length in $W_n$. If $f\in \Gamma[X_n]$, then
$\partial_i(\partial_{w_0}f)= 0$ for all $i$ with $0\leq i \leq n-1$.
Proposition \ref{invlem} implies that
$\partial_{w_0}(f)\in \Gamma^{(n)}$, for each $f\in \Gamma[X_n]$.

\begin{defn}
We define a scalar product $\langle\ ,\, \rangle$
on $\Gamma[X_n]$, with values in $\Gamma^{(n)}$,
by the rule 
\[
\langle f,g\rangle := \partial_{w_0}(fg), \ \ \ f,g\in \Gamma[X_n].
\]
\end{defn}

\begin{prop}
\label{2properties}
The scalar product $\langle\ ,\, \rangle: \Gamma[X_n]\times \Gamma[X_n]
\to \Gamma^{(n)}$ is $\Gamma^{(n)}$-linear. For any 
$f,g\in \Gamma[X_n]$ and $w\in W_n$, we have
\[
\langle\partial_wf,g \rangle  = \langle f,\partial_{w^{-1}}g\rangle.
\]
\end{prop}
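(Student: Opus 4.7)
The plan is to reduce both statements to properties of single divided differences $\partial_i$ for $i\in[0,n-1]$, using the Leibnitz rule (\ref{LeibR}) and the identification $\Gamma^{(n)}=\Gamma[X_n]^{W_n}$ from Proposition \ref{invlem}.

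For $\Gamma^{(n)}$-linearity, I first observe that each $\partial_i$ is $\Gamma^{(n)}$-linear. Indeed, if $h\in\Gamma^{(n)}$, then Proposition \ref{invlem} gives $s_ih=h$, hence $\partial_ih=0$, so the Leibnitz rule (\ref{LeibR}) yields $\partial_i(hf)=(\partial_ih)f+(s_ih)\partial_if=h\,\partial_if$. Writing $\partial_{w_0}=\partial_{i_1}\circ\cdots\circ\partial_{i_N}$ for a reduced expression of $w_0$, it follows that $\partial_{w_0}$ is $\Gamma^{(n)}$-linear, and therefore $\langle hf,g\rangle=\partial_{w_0}(hfg)=h\,\partial_{w_0}(fg)=h\langle f,g\rangle$, and similarly in the second argument. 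Additivity is clear.

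For the adjointness formula, the key identity at the level of a simple reflection is
\[
\partial_i\bigl((\partial_if)g\bigr)=(\partial_if)(\partial_ig)=\partial_i\bigl(f\,\partial_ig\bigr),
\]
valid for all $i\geq 0$. The first equality uses the Leibnitz rule, $\partial_i^2=0$, and the fact that $s_i(\partial_if)=\partial_if$ (a direct check from the definition of $\partial_i$ in both cases $i\geq 1$ and $i=0$); the second equality uses Leibnitz and $\partial_i^2g=0$. Choosing a reduced expression for $w_0$ that ends in $s_i$, one has $\partial_{w_0}=\partial_{w_0s_i}\circ\partial_i$, so applying $\partial_{w_0s_i}$ to the identity above gives
\[
\langle \partial_if,g\rangle=\partial_{w_0}\bigl((\partial_if)g\bigr)=\partial_{w_0}\bigl(f\,\partial_ig\bigr)=\langle f,\partial_ig\rangle.
\]

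The general case follows by induction on $\ell(w)$. Fix a reduced expression $w=s_{i_1}\cdots s_{i_\ell}$, and let $w':=s_{i_2}\cdots s_{i_\ell}$, so that $\partial_w=\partial_{i_1}\circ\partial_{w'}$ and $(w')^{-1}s_{i_1}$ is a reduced expression for $w^{-1}$. Then
\[
\langle\partial_wf,g\rangle=\langle\partial_{i_1}(\partial_{w'}f),g\rangle=\langle\partial_{w'}f,\partial_{i_1}g\rangle=\langle f,\partial_{(w')^{-1}}\partial_{i_1}g\rangle=\langle f,\partial_{w^{-1}}g\rangle,
\]
where the second equality is the simple reflection case and the third is the induction hypothesis. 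The main (mild) subtlety is verifying $s_i(\partial_if)=\partial_if$ uniformly for $i=0$ and $i\geq 1$, but this is immediate from the explicit formulas defining $\partial_i$ in \S\ref{Cprelims}; no other real obstacle arises.
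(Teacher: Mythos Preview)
Your proof is correct and follows essentially the same route as the paper: both arguments establish $\Gamma^{(n)}$-linearity of $\partial_{w_0}$ via the Leibnitz rule, reduce the adjointness to the case of a simple reflection, and prove that case via the identity $\partial_i\bigl((\partial_if)g\bigr)=(\partial_if)(\partial_ig)$ using $s_i(\partial_if)=\partial_if$ together with the factorization $\partial_{w_0}=\partial_{w_0s_i}\partial_i$. Your write-up is slightly more explicit about the induction on $\ell(w)$ and about why each $\partial_i$ is $\Gamma^{(n)}$-linear, but the substance is the same.
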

\begin{proof}
  The scalar product is $\Gamma^{(n)}$-linear, since the same is true
  for the operator $\partial_{w_0}$. For the second statement, given
  $f,g\in \Gamma[X_n]$, it suffices to show that
  $\langle\partial_if,g \rangle = \langle f,\partial_ig\rangle$ for
  $0\leq i \leq n-1$. We have
\[
\langle\partial_if,g \rangle  = \partial_{w_0}((\partial_if)g) = 
\partial_{w_0s_i}\partial_i((\partial_if)g) = 
\partial_{w_0s_i}((\partial_if)(\partial_ig))
\]
because $s_i(\partial_if)=\partial_if$. The expression on the right is 
symmetric in $f$ and $g$, hence
\[
\langle\partial_if,g \rangle  = \langle\partial_ig,f \rangle =
\langle f,\partial_ig\rangle,
\]
as required.
\end{proof}

\begin{prop}
\label{prodprop}
Let $u,v\in W_n$ be such that $\ell(u)+\ell(v)=n^2$. Then we have
\[
\langle\CS_u(X_n),\CS_v(X_n)\rangle = 
\begin{cases}
1 & \text{if $v=w_0u$}, \\
0 & \text{otherwise}.
\end{cases}
\]
\end{prop}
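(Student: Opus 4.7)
The plan is to exploit the adjointness proved in Proposition \ref{2properties} to transfer every divided difference onto one side of the pairing, thereby reducing the computation to the single normalization $\partial_{w_0}\bigl(\CS_{w_0}(X_n)\bigr)=1$, which itself is immediate from (\ref{Cstar}) applied to $\CS_{w_0}$.

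As a first step I would write $\CS_u(X_n)$ as a divided difference of $\CS_{w_0}(X_n)$. Since $\ell(u^{-1}w_0)=n^2-\ell(u)$ and $\ell\bigl(w_0\cdot(u^{-1}w_0)^{-1}\bigr)=\ell(u)=\ell(w_0)-\ell(u^{-1}w_0)$, formula (\ref{Cstar}) gives $\CS_u(X_n)=\partial_{u^{-1}w_0}\bigl(\CS_{w_0}(X_n)\bigr)$. Applying Proposition \ref{2properties} with $w=u^{-1}w_0$ then converts the scalar product into
\[
\langle\CS_u(X_n),\CS_v(X_n)\rangle=\langle\CS_{w_0}(X_n),\partial_{w_0u}\CS_v(X_n)\rangle.
\]

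The next step is to evaluate $\partial_{w_0u}\CS_v(X_n)$ by a second application of (\ref{Cstar}). The length hypothesis $\ell(u)+\ell(v)=n^2$ yields $\ell(w_0u)=n^2-\ell(u)=\ell(v)$, so this divided difference equals $\CS_{v(w_0u)^{-1}}(X_n)$ exactly when $\ell\bigl(v(w_0u)^{-1}\bigr)=\ell(v)-\ell(w_0u)=0$ — that is, precisely when $v=w_0u$, in which case it equals $\CS_1=1$ — and vanishes otherwise.

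Combining the two steps: if $v=w_0u$ then $\langle\CS_u,\CS_v\rangle=\partial_{w_0}\bigl(\CS_{w_0}(X_n)\bigr)=\CS_1=1$ by one final use of (\ref{Cstar}); and in every other case the pairing is zero. I do not anticipate any real obstacle, since the only thing that needs checking is that the length conditions in (\ref{Cstar}) are satisfied each time it is invoked, and those are automatic consequences of the assumption $\ell(u)+\ell(v)=\ell(w_0)=n^2$.
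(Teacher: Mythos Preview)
Your argument is correct and follows essentially the same route as the paper's own proof: write $\CS_u=\partial_{u^{-1}w_0}\CS_{w_0}$ via (\ref{Cstar}), move the divided difference across the pairing with Proposition~\ref{2properties}, evaluate $\partial_{w_0u}\CS_v$ again by (\ref{Cstar}), and finish with $\partial_{w_0}\CS_{w_0}=1$. The length verifications you supply are exactly the ones needed.
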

\begin{proof}
Using (\ref{Cstar}) and Proposition \ref{2properties}, we obtain
\[
\langle\CS_u(X_n),\CS_v(X_n)\rangle = \langle \partial_{u^{-1}w_0}
\CS_{w_0}(X_n), \CS_v(X_n)
\rangle = \langle \CS_{w_0}(X_n), \partial_{w_0u}\CS_v(X_n) \rangle.
\]
Also $\ell(w_0u)= \ell(w_0)-\ell(u)= \ell(v)$, and we deduce that
\[
\partial_{w_0u}\CS_v(X_n)  = 
\begin{cases}
1 & \text{if $v = w_0u$}, \\ 
0 & \text{otherwise}.
\end{cases}
\]
Since $\langle \CS_{w_0}(X_n), 1 \rangle = \partial_{w_0}(\CS_{w_0}(X_n)) = 1$,
the result follows.
\end{proof}

Although the elements of the $\Gamma^{(n)}$-basis
$\{\wt{Q}_\la(-X_n)\AS_\om(X)\}$ of $\Gamma[X_n]$ do not represent the
Schubert classes on the symplectic flag manifold, this product basis
is convenient for computational purposes. Indeed, following Lascoux
and Pragacz \cite{LP1} (in the finite case), one can identify
the dual $\Gamma^{(n)}$-basis of $\Gamma[X_n]$ relative to the scalar
product $\langle\ ,\, \rangle$, by working as shown below.

Let $\om_0=(n,n-1,\ldots,1)$ denote the permutation of  
longest length in $S_n$, and define $v_0:=w_0\om_0=\om_0w_0$.
We have
\begin{equation}
\label{wvcomm}
\partial_{w_0} = \partial_{v_0}\partial_{\om_0} = \partial_{\om_0}
\partial_{v_0}.
\end{equation}
We define a $\Gamma[X_n]^{S_n}$-valued scalar product $(\ ,\, )$
on $\Gamma[X_n]$ by the rule 
\[
(f,g) := \partial_{\om_0}(fg), \ \ \ f,g\in \Gamma[X_n].
\]
According to \cite[(5.12)]{M2}, the Schubert polynomials $\AS_u(X)$
for $u\in S_n$ satisfy the orthogonality relation
\[
\left(\,\AS_u(X), \,\om_0\AS_{u'\om_0}(-X)\,\right) = \delta_{u,u'}
\]
for any $u,u'\in S_n$.

Furthermore, define a $\Gamma^{(n)}$-valued scalar product $\{\ ,\, \}$
on $\Gamma[X_n]^{S_n}$ by the rule 
\[
\{f,g\} := \partial_{v_0}(fg), \ \ \ f,g\in \Gamma[X_n]^{S_n}.
\]
According to \cite[Thm.\ 5.23]{PR}, for any two partitions $\la,\mu\in
\cP_n$, we have
\[
\left\{\wt{Q}_\la(-X_n),\wt{Q}_{\delta_n\ssm \mu}(-X_n)\right\} = \delta_{\la,\mu},
\]
where $\delta_n\ssm \mu$ is the strict partition whose parts complement
the parts of $\mu$ in the set $\{n,n-1,\ldots,1\}$, and 
$\delta_{\la,\mu}$ denotes the Kronecker delta.

Observe that $(\ ,\, )$ is $\Gamma[X_n]^{S_n}$-linear and 
$\{\ ,\, \}$ is $\Gamma^{(n)}$-linear. Then (\ref{wvcomm}) gives 
\[
\langle f,g \rangle = \{(f,g)\}, \ \ \ \text{for any $f,g\in\Gamma[X_n]$},
\]
and moreover the orthogonality relation
\[
\left\langle\wt{Q}_\la(-X_n)\AS_u(X), \wt{Q}_{\delta_n\ssm \mu}(-X_n)
(\om_0\AS_{u'\om_0}(-X)) \right\rangle =\delta_{u,u'}\delta_{\la,\mu}
\]
holds, for any $u,u'\in S_n$ and $\la,\mu\in \cP_n$.  The reader should
compare this to the discussion in \cite[\S 1]{LP1}.

\section{Double Schubert polynomials of types B and D}
\label{dspBD}

\subsection{Preliminaries}
\label{Dprelims}
Let $b:=(b_1,b_2,\ldots)$ be a sequence of commuting variables, and
set $b_0:=1$ and $b_p=0$ for $p<0$. Consider the graded ring $\Gamma'$
which is the quotient of the polynomial ring $\Z[b]$ modulo the ideal
generated by the relations
\[
b_p^2+2\sum_{i=1}^{p-1}(-1)^i b_{p+i}b_{p-i}+(-1)^p b_{2p}=0, \ \ \ 
\text{for all $p\geq 1$}.
\]
The ring $\Gamma'$ is isomorphic to the ring of Schur $P$-functions.
Following \cite{P}, the $P$-functions map naturally to the Schubert
classes on maximal (odd or even) orthogonal Grassmannians. We regard
$\Gamma$ as a subring of $\Gamma'$ via the injective ring homomorphism
which sends $c_p$ to $2b_p$ for every $p\geq 1$.

The Weyl group for the root system of type $\text{B}_n$ is the same
group $W_n$ as the one for type $\text{C}_n$. The
Ikeda-Mihalcea-Naruse type B double Schubert polynomials $\BS_w(X,Y)$
for $w\in W_\infty$ form a natural $\Z[Y]$-basis of
$\Gamma'[X,Y]$. For any Weyl group element $w$, the polynomial
$\BS_w(X,Y)$ satisfies
\[
\BS_w(X,Y) = 2^{-s(w)} \CS_w(X,Y), 
\]
where $s(w)$ denotes the number of indices $i$ such that $w_i<0$.  The
algebraic theory of these polynomials is thus nearly identical to that
in type C, provided that one uses coefficients in the ring $\Z[\frac{1}{2}]$. 

If $\BS'_w =\BS'_w(X,Y)$ is the polynomial obtained from $\BS_w(X,Y)$
by setting $x_j=y_j=0$ for all $j>n$, then the $\BS'_w$ for $w\in
W^{(n)}$ form a $\Z[Y_n]$-basis of $\Gamma'[X_n,Y_n]$.  The
polynomials $\BS'_w$ for $w\in W_n$ represent the equivariant Schubert
classes on the odd orthogonal flag manifold $\SO_{2n+1}/B$, whose
equivariant cohomology ring (with $\Z[\frac{1}{2}]$-coefficients) is
isomorphic to that of $\Sp_{2n}/B$. For further details, the reader
may consult the references \cite{IMN1} and \cite[\S 6.3.1]{T5}.

In the rest of this paper we discuss the corresponding theory for the
even orthogonal group, that is, in Lie type D, and assume that $n\geq 2$.
The Weyl group $\wt{W}_n$ for the root system $\text{D}_n$ is the
subgroup of $W_n$ consisting of all signed permutations with an even
number of sign changes.  The group $\wt{W}_n$ is an extension of $S_n$
by the element $s_\Box=s_0s_1s_0$, which acts on the right by
\[
(w_1,w_2,\ldots,w_n)s_\Box=(\ov{w}_2,\ov{w}_1,w_3,\ldots,w_n).
\]
There is a natural embedding $\wt{W}_k\hookrightarrow \wt{W}_{k+1}$ of
Weyl groups defined by adjoining the fixed point $k+1$, and we let
$\wt{W}_\infty := \cup_k \wt{W}_k$. The elements of the set $\N_\Box
:=\{\Box,1,\ldots\}$ index the simple reflections in $\wt{W}_\infty$.
The {\em length} $\ell(w)$ of an element $w\in \wt{W}_\infty$ is defined 
as in type C. The element of longest length $\wt{w}_0=\wt{w}_0^{(n)}$ 
in $\wt{W}_n$ satisfies
\[
\wt{w}_0=\begin{cases}
         (\ov{1},\ldots,\ov{n}) & \text{if $n$ is even}, \\
         (1,\ov{2},\ldots,\ov{n}) & \text{if $n$ is odd}. 
         \end{cases} 
\]

We define an action of $\wt{W}_\infty$ on $\Gamma'[X,Y]$ by ring
automorphisms as follows. The simple reflections $s_i$ for $i>0$ act
by interchanging $x_i$ and $x_{i+1}$ and leaving all the remaining
variables fixed. The reflection $s_\Box$ maps 
$(x_1,x_2)$ to $(-x_2,-x_1)$, fixes the $x_j$ for $j\geq 3$ and all 
the $y_j$, and satisfies, for any $p\geq 1$,
\begin{align*}
\label{s_Box}
s_\Box(b_p) :=
b_p+(x_1+x_2)\sum_{j=0}^{p-1}\left(\sum_{a+b=j}x_1^ax_2^b\right)
c_{p-1-j}.
\end{align*}
For each $i\in \N_\Box$, define the divided difference operator
$\partial_i^x$ on $\Gamma'[X,Y]$ by
\[
\partial_\Box^xf := \frac{f-s_\Box f}{-x_1-x_2}, \qquad
\partial_i^xf := \frac{f-s_if}{x_i-x_{i+1}} \ \ \ \text{for $i\geq 1$}.
\]
Consider the ring involution $\omega:\Gamma'[X,Y]\to\Gamma'[X,Y]$
determined by
\[
\omega(x_j) = -y_j, \qquad
\omega(y_j) = -x_j, \qquad
\omega(b_p)=b_p
\]
and set $\partial_i^y:=\omega\partial_i^x\omega$ for each $i\in \N_\Box$.

The Ikeda-Mihalcea-Naruse double Schubert polynomials
$\DS_w=\DS_w(X,Y)$ for $w\in \wt{W}_{\infty}$ are the unique family of
elements of $\Gamma'[X,Y]$ satisfying the equations
\begin{equation}
\label{Dddeq}
\partial_i^x\DS_w = \begin{cases}
\DS_{ws_i} & \text{if $\ell(ws_i)<\ell(w)$}, \\ 
0 & \text{otherwise},
\end{cases}
\quad
\partial_i^y\DS_w = \begin{cases}
\DS_{s_iw} & \text{if $\ell(s_iw)<\ell(w)$}, \\ 
0 & \text{otherwise},
\end{cases}
\end{equation}
for all $i\in \N_\Box$, together with the condition that the constant
term of $\DS_w$ is $1$ if $w=1$, and $0$ otherwise.

The operators $\partial_i:=\partial_i^x$ for $i\in \N_\Box$ satisfy the
same Leibnitz rule (\ref{LeibR}) as in the type C case, and for any
$w\in\wt{W}_\infty$, the divided difference operator $\partial_w$ is
defined as before. For any $u,w\in \wt{W}_\infty$, we have
\[
\partial_u\DS_w(X,Y) = \begin{cases}
\DS_{wu^{-1}}(X,Y) & \text{if $\ell(wu^{-1}) = \ell(w) - \ell(u)$}, \\
0 & \text{otherwise}.
\end{cases}
\]

\subsection{The set $\wt{W}^{(n)}$ and the ring $\Gamma'[X_n,Y_n]$}
It is known that the $\DS_w$ for $w\in \wt{W}_\infty$ form a
$\Z[Y]$-basis of $\Gamma'[X,Y]$. Let $\DS^{(n)}_w =\DS^{(n)}_w(X_n,Y_n)$ be the
polynomial obtained from $\DS_w(X,Y)$ by setting $x_j=y_j=0$ for all
$j>n$.  For every $n\geq 1$, let
$$\wt{W}^{(n)}:=\{w\in \wt{W}_\infty\ |\ w_{n+1}< w_{n+2}<\cdots\}.$$
Let $\DS_w(X):=\DS_w(X,0)$ denote the single Schubert polynomial.

\begin{prop}
\label{basispropD}
The $\DS_w(X)$ for $w\in \wt{W}^{(n)}$ form a $\Z$-basis of
$\Gamma'[X_n]$, and a $\Z[Y]$-basis of $\Gamma'[X_n,Y]$. The $\DS^{(n)}_w$
for $w\in \wt{W}^{(n)}$ form a $\Z[Y_n]$-basis of $\Gamma'[X_n,Y_n]$.
\end{prop}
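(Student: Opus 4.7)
The plan is to model the proof on the type C arguments, specifically Propositions \ref{basisprop} and \ref{basisprop2} and Corollary \ref{basiscor2}. The key inputs are the stated $\Z[Y]$-basis of $\Gamma'[X,Y]$ by $\{\DS_w(X,Y)\}_{w\in\wt{W}_\infty}$, the divided difference recursion (\ref{Dddeq}), and the observation that $\DS_w(X,Y)\in\Gamma'[X_n,Y]$ if and only if $\partial_m^x\DS_w=0$ for all $m>n$, if and only if $w\in\wt{W}^{(n)}$ (and analogously for single polynomials).

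I would first prove the second claim, adapting the argument of Proposition \ref{basisprop}. Given $f\in\Gamma'[X_n,Y]$, expand $f=\sum_{w\in\wt{W}_\infty}f_w\DS_w(X,Y)$ with $f_w\in\Z[Y]$. For any $m>n$, applying $\partial_m^x$ and invoking (\ref{Dddeq}) gives $0=\partial_m^x f=\sum_{w\,:\,\ell(ws_m)<\ell(w)}f_w\DS_{ws_m}(X,Y)$; by $\Z[Y]$-linear independence of the $\DS_v$'s, each coefficient in this sum vanishes. Hence $f_w=0$ whenever $w\notin\wt{W}^{(n)}$, and the $\DS_w(X,Y)$ with $w\in\wt{W}^{(n)}$ span $\Gamma'[X_n,Y]$ over $\Z[Y]$; linear independence is inherited from the full $\Z[Y]$-basis. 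The first claim follows by the same divided-difference argument once we know that $\{\DS_w(X)\}_{w\in\wt{W}_\infty}$ is a $\Z$-basis of $\Gamma'[X]$; this is obtained from the $\Z[Y]$-basis of $\Gamma'[X,Y]$ via the change of scalars $\Gamma'[X,Y]\otimes_{\Z[Y]}\Z\cong\Gamma'[X]$, with $\Z$ regarded as a $\Z[Y]$-module through $Y\mapsto 0$.

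For the third claim, I would use a triangularity argument. Writing $\DS_w(X,Y)=\DS'_w+R_w$, the remainder $R_w$ lies in the ideal $I=(y_j:j>n)\subset\Gamma'[X_n,Y]$. Given $g\in\Gamma'[X_n,Y_n]$, view $g$ inside $\Gamma'[X_n,Y]$, expand via the second claim as $g=\sum_w f_w(Y)\DS_w(X,Y)$, and set $y_j=0$ for $j>n$ to obtain $g=\sum_w f_w(Y_n,0)\DS'_w$; this gives spanning. For uniqueness, suppose $\sum_w g_w(Y_n)\DS'_w=0$ with $g_w(Y_n)\in\Z[Y_n]$. Then $\sum_w g_w(Y_n)\DS_w(X,Y)=\sum_w g_w(Y_n)R_w\in I$. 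Since $\Gamma'[X_n,Y]=\bigoplus_{w\in\wt{W}^{(n)}}\Z[Y]\cdot\DS_w(X,Y)$ is free over $\Z[Y]$, the ideal $I$ decomposes summand-wise as $\bigoplus_w(y_j:j>n)\cdot\DS_w(X,Y)$; hence each $g_w(Y_n)\in(y_j:j>n)\cap\Z[Y_n]=0$.

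I expect no serious obstacle. The first two parts are direct divided-difference arguments, formally identical to type C, with the exceptional reflection $s_\Box$ playing no role because $s_m$ for $m>n\geq 2$ commutes with it in the relevant sense. The one mild technical point is the third claim, where one must check that the ideal generated by $\{y_j:j>n\}$ in the free $\Z[Y]$-module $\Gamma'[X_n,Y]$ decomposes as a direct sum over the basis summands; this is the standard fact that $J\cdot M=\bigoplus_i J\cdot m_i$ for a free module $M=\bigoplus_i R\cdot m_i$ and an ideal $J\subset R$, applied with $J=(y_j:j>n)\subset\Z[Y]$.
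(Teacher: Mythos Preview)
Your proposal is correct and close in spirit to the paper's approach, but with a small reorganization worth noting. The paper argues first for the single polynomials (as in Proposition~\ref{basisprop}) using the known $\Z$-basis of $\Gamma'[X]$, and then lifts to the double polynomials by invoking the type D analogue of the expansion $\DS_w(X,Y)=\sum_{uv=w}\AS_{u^{-1}}(-Y)\DS_v(X)$ from \cite[Cor.~8.10]{IMN1}, using that the lowest $y$-degree term is $\DS_w(X)$ (as in Proposition~\ref{basisprop2}). You instead run the divided-difference argument directly over $\Z[Y]$ to get the second claim, then obtain the single case by specializing $Y\mapsto 0$. Your route avoids citing the expansion formula at the cost of checking explicitly that the $\Z[Y]$-basis of $\Gamma'[X,Y]$ specializes to a $\Z$-basis of $\Gamma'[X]$, which is a straightforward base change. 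For the third claim, your ideal-theoretic argument with $I=(y_j:j>n)$ is exactly the content behind Corollary~\ref{basiscor2}, just spelled out more fully than the paper does. Either ordering works; the key idea---that $\partial_m$ for $m>n$ annihilates $\Gamma'[X_n,Y]$ and hence forces all coefficients outside $\wt{W}^{(n)}$ to vanish---is the same in both.
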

\begin{proof}
The argument is the same as for the proofs of Propositions 
\ref{basisprop}, \ref{basisprop2}, and Corollary \ref{basiscor2}
in \S \ref{dspC}.
\end{proof}

\subsection{The geometrization map $\pi'_n$}
\label{Dgeom}
The double Schubert polynomials $\DS^{(n)}_w(X,Y)$ for $w\in \wt{W}_n$
represent the equivariant Schubert classes on the even orthogonal flag
manifold.  Let $\{e_1,\ldots,e_{2n}\}$ denote the standard orthogonal
basis of $E:=\C^{2n}$ and let $F_i$ be the subspace spanned by the
first $i$ vectors of this basis, so that $F_{n-i}^\perp = F_{n+i}$ for
$0\leq i \leq n$. We say that two maximal isotropic subspaces $L$ and
$L'$ of $E$ are {\em in the same family} if $\dim(L\cap L')\equiv n$
(mod 2).  The orthogonal flag manifold $\M_n'$ parametrizes complete
flags $E_\bull$ in $E$ with $E_{n-i}^\perp = E_{n+i}$ for $0\leq i
\leq n$, and $E_n$ in the same family as $\langle
e_{n+1},\ldots,e_{2n}\rangle$.  Equivalently, $E_n$ is in the same
family as $F_n$, if $n$ is even, and in the opposite family, if $n$ is
odd. We have that $\M_n'=\SO_{2n}/B$ for a Borel subgroup $B$ of the
orthogonal group $\SO_{2n}=\SO_{2n}(\C)$. If $T$ denotes the
associated maximal torus in $B$, then the $T$-equivariant cohomology
ring $\HH^*_{T}(\M'_n)$ is a $\Z[Y_n]$-algebra, where $y_i$ is
identified with the equivariant Chern class
$-c_1^T(F_{n+1-i}/F_{n-i})$, for $1\leq i \leq n$.

The Schubert varieties in $\M_n'$ are the closures of the
$B$-orbits, and are indexed by the elements of
$\wt{W}_n$. Concretely, any $w\in \wt{W}_n$ corresponds to a Schubert
variety $X_w=X_w(F_\bull)$ of codimension $\ell(w)$, which is the 
closure of the $B$-orbit
\[
   X^\circ_w := \{ E_\bull \in \M'_n \mid \dim(E_r \cap
   F_s) = d'_w(r,s) \ \, \mathrm{for} \ 1\leq r \leq n-1, \, 1\leq s \leq 2n \},
\]
where $d'_w(r,s)$ denotes the rank function defined as follows. There
is a group monomorphism $\zeta:\wt{W}_n\hra S_{2n}$, defined by
restricting the map $\zeta$ of Section \ref{geommap} to $\wt{W}_n$.
Then $d'_w(r,s)$ equals the number of $i\leq r$ such that
$\zeta(\wt{w}_0w\wt{w}_0)_i > 2n-s$. Since $X_w$ is stable under the
action of $T$, we obtain an {\em equivariant Schubert class}
$[X_w]^T:=[ET\times^{T}X_w]$ in $\HH^*_{T}(\M'_n)$.

Following \cite{IMN1}, there is a surjective homomorphism of graded 
$\Z[Y_n]$-algebras
\[
\pi'_n:\Gamma'[X_n,Y_n] \to \HH^*_{T}(\M'_n)
\]
such that 
\begin{equation}
\label{Dweq}
\pi'_n(\DS^{(n)}_w)= \begin{cases}
[X_w]^T & \text{if $w\in \wt{W}_n$}, \\
0 & \text{if $w\in \wt{W}^{(n)}\smallsetminus \wt{W}_n$}.
\end{cases}
\end{equation}
We let $E_i$ denote the $i$-th tautological vector vector bundle over
$\M'_n$, for $0\leq i \leq 2n$.  The map $\pi'_n$ is defined by the equations
\begin{equation}
\label{Dweq2}
\pi'_n(x_i) = c_1^T(E_{n+1-i}/E_{n-i}) \ \ \text{and} \ \ 
\pi'_n(b_p) = \frac{1}{2}\,c_p^T(E-E_n-F_n)
\end{equation}
for $1\leq i \leq n$ and $p\geq 1$.

\begin{remark}
The above convention on the family of $E_n$ in the definition of
$\M_n'$ differs from that stated in \cite[\S 6.3.2]{T5} and \cite[\S
4.1]{T6}, and corrects these latter two references. This is
necessary in order for the formulas (\ref{Dweq}) and (\ref{Dweq2}) to
hold, which are directly analogous to the ones for the Lie types B and
C.
\end{remark}

\subsection{The kernel of the map $\pi'_n$}
In the following discussion, it suffices to work with coefficients in
the ring $\Z[\frac{1}{2}]$, but for ease of notation we will employ
the rational numbers $\Q$ instead. For any abelian group $A$, let
$A_\Q:=A\otimes_\Z\Q$, and use the tensor product to extend $\pi'_n$
to a homomorphism of $\Q[Y_n]$-algebras
\[
\pi'_n:\Gamma'[X_n,Y_n]_\Q \to \HH^*_{T}(\M'_n)_\Q.
\]

\begin{defn}
Define
\[
\wt{b}_n:=\sum_{j=0}^{n-1}b_{n-j}e_j(-Y_n),
\]
let 
\[
\wh{B}^{(n)}:= \Z[\wt{b}_n, {}^nc^n_1, {}^nc^n_2,\ldots],
\] 
and let
$\wh{\IB}^{(n)}$ be the ideal of $\Gamma'[X_n,Y_n]_\Q$ generated by
the homogeneous elements in $\wh{B}^{(n)}$ of positive degree.
\end{defn}

\begin{lemma}
\label{OGevenlem}
We have $\wh{\IB}^{(n)}\subset \Ker\pi'_n$.
\end{lemma}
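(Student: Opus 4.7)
The plan is to adapt the Chern-class argument in the proof of Lemma \ref{IGlem} and verify that both types of generators of $\wh{\IB}^{(n)}$, namely the ${}^nc^n_p$ for $p\geq 1$ and the single additional generator $\wt{b}_n$, lie in $\Ker \pi'_n$.

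For the generators ${}^nc^n_p$ the argument is essentially identical to the symplectic case. Since $c_p = 2b_p$ under the inclusion $\Gamma \hookrightarrow \Gamma'$, equation (\ref{Dweq2}) yields $\pi'_n(c_p) = c_p^T(E-E_n-F_n)$. Applying $\pi'_n$ to the generating function identity (\ref{genfuneq}) and using $\pi'_n(x_j) = c_1^T(E_{n+1-j}/E_{n-j})$ together with $\pi'_n(y_j) = y_j$, one obtains
\[
\pi'_n\Bigl(\sum_{p\geq 0} {}^nc^n_p\, t^p\Bigr) \;=\; c^T(E-E_n-F_n)\cdot \frac{c^T(E_n)}{\prod_{j=1}^n(1+y_jt)}.
\]
Since $F_n \subset E=F_{2n}$ is maximal isotropic for the symmetric form on $E$, the induced pairing identifies $F_{2n}/F_n \cong F_n^*$, so $\prod_j(1+y_jt) = c^T(F_n^*) = c^T(F_{2n})/c^T(F_n)$. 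Substituting and using $E=F_{2n}$, the right-hand side collapses to $c^T(E)/c^T(F_{2n}) = 1$, whence $\pi'_n({}^nc^n_p) = 0$ for every $p\geq 1$.

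For the additional generator $\wt{b}_n$, I would first re-express it through the ring $\Gamma$. Using $c_p = 2b_p$ for $p\geq 1$ together with $c^{-n}_n = \sum_{j=0}^{n} c_{n-j}\,e_j(-Y_n)$, one finds
\[
\wt{b}_n \;=\; \tfrac12\bigl(c^{-n}_n - e_n(-Y_n)\bigr).
\]
The analogous generating-function computation shows $\pi'_n(c^{-n}_n) = c_n^T(E-E_n)$. Since $E_n$ is maximal isotropic in $E$, the form identifies $E/E_n \cong E_n^*$, so $c_n^T(E-E_n) = c_n^T(E_n^*) = (-1)^n c_n^T(E_n)$. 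Combining this with $\pi'_n(e_n(-Y_n)) = (-1)^n y_1\cdots y_n = c_n^T(F_n)$ gives
\[
\pi'_n(\wt{b}_n) \;=\; \tfrac12\bigl((-1)^n c_n^T(E_n) - c_n^T(F_n)\bigr),
\]
which vanishes in $\HH^*_T(\M'_n)_\Q$ by the Pfaffian/Euler class identity $(-1)^n c_n^T(E_n) = c_n^T(F_n)$. This identity is the cohomological manifestation of the fact that $E_n$ and $F_n$ are maximal isotropic subbundles of $E$ lying in the compatible families prescribed in \S \ref{Dgeom}.

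The main obstacle is this last step: pinning down the Pfaffian identity with the correct sign. A clean way to handle it is to pass to an alternative presentation $\HH^*_T(\M'_n)_\Q \cong \Q[A_n, Y_n]/K'_n$ parallel to the second proof of Lemma \ref{IGlem}, where $K'_n$ is generated by the differences $e_i(A_n^2) - e_i(Y_n^2)$ for $1 \leq i \leq n-1$ together with the single Pfaffian-type relation $a_1\cdots a_n - (-1)^n y_1\cdots y_n$ imposed by the family condition on $E_n$. In this framework the two computations above become direct symbolic verifications, and all sign bookkeeping is absorbed into the explicit form of $K'_n$. This extra generator $\wt{b}_n$ is the genuinely new feature of type D relative to type C, reflecting that the equivariant cohomology of $\M'_n$ has one degree-$n$ Pfaffian relation in place of the degree-$2n$ top Chern relation of the symplectic case.
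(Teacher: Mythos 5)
Your proposal is correct and follows essentially the same route as the paper: the generators ${}^nc^n_p$ are handled exactly as in Lemma \ref{IGlem}, and the new generator $\wt{b}_n$ is shown to map to $\tfrac12\bigl((-1)^nc^T_n(E_n)-c^T_n(F_n)\bigr)=\tfrac12\bigl(e_n(A_n)-e_n(-Y_n)\bigr)$, which vanishes by the Edidin--Graham result encoded in the presentation of $\HH^*_T(\M'_n)_\Q$. Your identity $2\wt{b}_n=c^{-n}_n-e_n(-Y_n)$ is just a compact repackaging of the paper's direct symmetric-function computation of $\pi'_n(2\wt{b}_n)$, and your proposed presentation $\Q[A_n,Y_n]/K'_n$ is precisely the ideal $L_n$ the paper uses.
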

\begin{proof}
Let $A_n:=(a_1,\ldots,a_n)$ and $\HH_n:=\Q[A_n,Y_n]/L_n$, where $L_n$
is the ideal of $\Q[A_n,Y_n]$ generated by the differences
$e_i(A_n^2)-e_i(Y_n^2)$ for $1 \leq i \leq n-1$ and the difference
$e_n(A_n)-e_n(-Y_n)$.  It is known that the equivariant cohomology
ring $\HH_{T}^*(\M'_n)_\Q$ is canonically isomorphic to $\HH_n$ as a
$\Q[Y_n]$-algebra (compare with \cite[\S 3]{F2}). The
geometrization map $\pi'_n:\Gamma'[X_n,Y_n]_\Q\to \HH_n$ satisfies
$\pi'_n(x_j)=-a_j$ for $1\leq j \leq n$, while
\[
\pi'_n(b_p):=\frac{1}{2}\sum_{i+j=p}e_i(A_n)h_j(Y_n), \ \ \ p\geq 1.
\]
The element $e_n(A_n)-e_n(-Y_n)$ is thus identified with the
difference $(-1)^nc^T_n(E_n) - c^T_n(F_n)$. Our conventions on the
families of $E_n$ and $F_n$ imply that the latter class vanishes in
$\HH_{T}^*(\M'_n)_\Q$, by a result of Edidin and Graham
\cite[Thm. 1]{EG}.

We deduce that ${}^nc^n_p \in \Ker\pi'_n$ for each
$p\geq 1$ as in the proof of Lemma \ref{IGlem}, 
so it suffices to check that $\wt{b}_n\in \Ker\pi'_n$. Indeed, we have
\begin{gather*}
\pi_n'(2\wt{b}_n) = \sum_{j=0}^{n-1}e_j(-Y_n)\sum_{\alpha+\beta=n-j}
e_\alpha(A_n)h_\beta(Y_n) \\
= \sum_{\alpha=1}^ne_\alpha(A_n)\sum_{j=0}^{n-\alpha}e_j(-Y_n)h_{n-\alpha-j}(Y_n)
+\sum_{j=0}^{n-1}e_j(-Y_n)h_{n-j}(Y_n) \\
= e_n(A_n) - e_n(-Y_n).
\end{gather*}
\end{proof}

If $\la=(\la_1>\la_2>\cdots > \la_\ell)$ is a strict partition, let
$\wt{w}_\la$ be the corresponding increasing element of
$\wt{W}_\infty$, so that the negative components of $\wt{w}_\la$ are
exactly $-\la_1-1,\ldots,-\la_\ell-1$ and possibly also $-1$,
depending on the parity of $\ell=\ell(\la)$.

\begin{lemma}
\label{Pprop}
If $\la$ is a strict partition with $\la_1\geq n$, then 
$\DS^{(n)}_{\wt{w}_\la}(X_n,Y_n)\in \wh{\IB}^{(n)}$.
\end{lemma}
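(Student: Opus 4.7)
The plan is to mirror the proof of Lemma~\ref{Qprop} from the symplectic case, with the additional generator $\wt{b}_n \in \wh{B}^{(n)}$ handling the new boundary case $\la_1 = n$. In type C the hypothesis $\la_1 > n$ is strict; the index-shift in the definition of $\wt{w}_\la$ for type D admits equality $\la_1 \geq n$, and this is precisely where $\wt{b}_n$ must enter.

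First, I would establish two preliminary congruences modulo $\wh{\IB}^{(n)}$ in $\Gamma'[X_n, Y_n]_\Q$. Since $\wh{\IGam}^{(n)} \subset \wh{\IB}^{(n)}$, the generating-function identity (\ref{hqeq}) and its consequence (\ref{ehtoq}) immediately give $c^{-n}_p \in \wh{\IB}^{(n)}$ for all $p > n$, just as in Lemma~\ref{Qprop}. For the new boundary value $p = n$, a direct expansion of the definition of $\wt{b}_n$ yields the identity
\[
c^{-n}_n \;=\; 2\wt{b}_n + e_n(-Y_n),
\]
so that $c^{-n}_n \equiv e_n(-Y_n) \pmod{\wh{\IB}^{(n)}}$. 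Comparing with the congruence $c^{-n}_n \equiv (-1)^n e_n(X_n) \pmod{\wh{\IGam}^{(n)}}$ obtained from the $t^n$-coefficient of (\ref{hqeq}) then yields the key relation
\[
e_n(X_n) \;\equiv\; e_n(Y_n) \pmod{\wh{\IB}^{(n)}},
\]
the even-orthogonal analogue of the top-Chern-class identity $c^T_n(E_n) = c^T_n(F_n)$ behind Lemma~\ref{OGevenlem}.

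Next, I would invoke the type D analogue of the raising-operator formula (\ref{rof})---namely the orthogonal counterpart of \cite[Thm.\ 6.6]{IMN1}---which expresses $\DS_{\wt{w}_\la}(X,Y)$ as $R^\infty$ applied to a product in $b$-variables with a shift sequence $\be(\la)$ whose first entry satisfies $\be_1(\la) = -\la_1$. Restricting to $\Gamma'[X_n,Y_n]_\Q$ by setting $y_j = 0$ for $j > n$, and using that $h^{-r}_j(-Y)|_{Y \to Y_n} = e_j(-Y_n)$ whenever $r \geq n$, each monomial in the Pfaffian expansion of $\DS'_{\wt{w}_\la}(X_n,Y_n)$ acquires a first factor that (after multiplying by $2$) has the form $c^{-n}_p$ with $p \geq \la_1 \geq n$.

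It would remain to verify that each such first factor lies in $\wh{\IB}^{(n)}$. Monomials with $p > n$ are handled immediately by the first step. The only remaining case is $\la_1 = n$ together with no raising on the first row, where the first factor equals $\wt{b}_n + e_n(-Y_n)$: the $\wt{b}_n$ summand is a generator of the ideal, and the $e_n(-Y_n)$ remainder must be absorbed via the congruence $e_n(Y_n) \equiv e_n(X_n) \pmod{\wh{\IB}^{(n)}}$ combined with the structure of the remaining factors in the expansion. I anticipate the main obstacle to be precisely this last step---reconciling the residual $e_n(-Y_n)$ contribution against the tail of the Pfaffian---since it has no counterpart in the symplectic proof and requires the interplay of the extra generator $\wt{b}_n$ with the orthogonal Chern-class congruence established above, the combined effect that distinguishes the type D presentation from type C.
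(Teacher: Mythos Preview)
Your proposal has a genuine gap at precisely the point you flag as the ``main obstacle,'' and the mechanism you suggest for resolving it does not work. Suppose you could write $\DS'_{\wt{w}_\la}$ as $R^\infty$ applied to a product whose first factor, after restriction, is (up to the factor of $2$) $c^{-n}_{p}$ with $p\geq \la_1$. Separating the terms with no raising on the first index from the rest, the contribution with $p=\la_1=n$ is $(\text{first factor})\cdot Q'$, where $Q'$ is the Pfaffian of the tail $(\la_2,\ldots,\la_\ell)$. Your congruence $c^{-n}_n\equiv e_n(-Y_n)$ then leaves you with $e_n(-Y_n)\cdot Q'$, and since $\la_2<n$ there is no reason for $Q'$ to lie in $\wh{\IB}^{(n)}$; the scalar $e_n(-Y_n)\in\Z[Y_n]$ cannot help. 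The congruence $e_n(X_n)\equiv e_n(Y_n)$ you derive is correct but does not touch this term: nothing in your expansion produces a compensating $e_n(X_n)\cdot Q'$.

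The paper sidesteps this entirely by a different reduction. Instead of expanding a raising-operator formula, it invokes the Pfaffian recursion $P_\mu=\sum_{j\geq 2}(-1)^j P_{\mu_1,\mu_j}P_{\mu\smallsetminus\{\mu_1,\mu_j\}}$ from \cite[Thm.\ 6.6]{IMN1} and \cite[(2.11)]{IMN2}, together with \cite[Prop.\ 2.1]{IMN2} to write each two-row factor $P_{\mu_1,\mu_j}$ as a $\Z[Y_n]$-linear combination of products $P_rP_s$ with $r\geq \mu_1$. This reduces the problem to showing $P_r\in\wh{\IB}^{(n)}$ for every $r\geq n$. For $r>n$ one has $P_r=\frac{1}{2}\ov{Q}^{1-r}_r$, handled by Lemma~\ref{Qprop}. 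The crucial point you are missing is that the one-row type~D formula is $P_r=\sum_{j=0}^{r-1} b_{r-j}e_j(-Y_r)$, with the sum stopping at $j=r-1$: hence $P_n=\wt{b}_n$ \emph{exactly}, a generator of the ideal, with no residual $e_n(-Y_n)$ term whatsoever. Your assumed first factor $\wt{b}_n+e_n(-Y_n)$ comes from implicitly using the shift $\be_1=-\la_1$ in a naive $b$-analogue of~(\ref{rof}); the actual factorial $P$-function differs from that by precisely the constant-tail term, which is why the paper's one-row computation closes immediately.
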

\begin{proof}
  For each strict partition $\mu$ of length $\ell$, let
  $P_\mu:= \DS^{(n)}_{\wt{w}_\mu}(X_n,Y_n)$.  According to \cite[Thm.\
  6.6]{IMN1} and \cite[(2.11)]{IMN2}, we have the Pfaffian recursion
\begin{equation}
\label{PPfeq}
P_\mu = \sum_{j=2}^\ell(-1)^j P_{\mu_1,\mu_j}
P_{\mu_2,\ldots,\wh{\mu}_j,\ldots, \mu_\ell}.
\end{equation}
Moreover, it follows from \cite[Prop.\ 2.1]{IMN2} that every factor
$P_{\mu_1,\mu_j}$ in (\ref{PPfeq}) is a $\Z[Y_n]$-linear combination
of products $P_rP_s$ with $r\geq \mu_1$.

It is easy to show that for any integer $r\geq 1$,
\[
P_r = \begin{cases}
  \sum_{j=0}^{r-1} b_{r-j} e_j(-Y_r) & \text{if $r\leq n$}, \\
   \sum_{j=0}^n b_{r-j} e_j(-Y_n) & \text{if $r>n$}.
\end{cases}
\]
It follows that $P_r = \frac{1}{2} \ov{Q}^{1-r}_r$ for all $r>n$,
while $P_n=\wt{b}_n$. We now deduce from equation (\ref{Qleq}) and
Lemma \ref{Qprop} that $P_r\in \wh{\IB}^{(n)}$ for every $r\geq
n$. The proof is finished by combining this fact with (\ref{PPfeq}).
\end{proof}

\begin{lemma}
\label{DSlem}
For any $w\in \wt{W}_{\infty}\ssm \wt{W}_n$, we have $\DS^{(n)}_w\in \wh{\IB}^{(n)}$.
\end{lemma}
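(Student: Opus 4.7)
I follow the strategy of Lemma \ref{CSlem}, adapted to type D, and induct on the minimal $m$ with $w\in\wt W_m$. The base case $m=n+1$ forces some $w_i=\pm(n+1)$. If $\overline{n+1}$ appears in $w$, I apply the type D analog of the transition equations \eqref{Ctrans} (cf.\ \cite[Prop.\ 6.11]{IMN1}) and track the partition $\mu(\cdot)$ as in the type C argument: the unbarred transitions $t_{ir}$ preserve $\mu$, while the barred ones can only enlarge its top part, so every terminal polynomial is some $\DS'_{\wt w_\nu}$ with $\nu_1\geq n+1$; Lemma \ref{Pprop} then concludes. If only the unbarred $n+1$ appears, say at position $i\leq n$, I choose $\sigma\in S_n$ with $u:=w\sigma$ satisfying $u_1=n+1$ and $\ell(u)=\ell(w)+\ell(\sigma)$, and set $\bar u:=us_\Box=(-u_2,\overline{n+1},u_3,\ldots)$. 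Since $u_1+u_2=(n+1)+u_2>0$, one verifies $\ell(\bar u)=\ell(u)+1$, so by \eqref{Dddeq} we have $\DS'_u=\partial_\Box\DS'_{\bar u}$ and $\DS'_w=\partial_\sigma\DS'_u$; the previous subcase applied to $\bar u$ gives $\DS'_{\bar u}\in\wh\IB^{(n)}$, and the divided-difference preservation established below transports this to $\DS'_w$.

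For the inductive step $w\notin\wt W_{n+1}$, I let $m\geq n+2$ be minimal with $w\in\wt W_m$, and apply the base case with $m-1$ in place of $n$ to obtain $\DS_w\in\wh\IB^{(m-1)}$. Specializing $x_j=y_j=0$ for $n<j\leq m-1$, the generators restrict as ${}^{m-1}c^{m-1}_p\mapsto {}^nc^n_p$ (directly from the definition) and $\wt b_{m-1}\mapsto \tfrac12 c^{-n}_{m-1}$. The latter lies in $\wh\IGam^{(n)}\subset\wh\IB^{(n)}$ by the calculation at the start of the proof of Lemma \ref{Qprop}, since $m-1>n$. Hence the image of $\wh\IB^{(m-1)}$ in $\Gamma'[X_n,Y_n]_\Q$ lies in $\wh\IB^{(n)}$, and the conclusion follows.

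The main obstacle is verifying $\partial_i(\wh\IB^{(n)})\subset\wh\IB^{(n)}$ for $i\in\{\Box,1,\ldots,n-1\}$, which is used in both reductions above. For $i\in[1,n-1]$, $s_i$ fixes both generators $\wt b_n$ and ${}^nc^n_p$, so the Leibnitz rule \eqref{LeibR} handles the claim. For $i=\Box$, a generating-function calculation using the formula for $s_\Box(c_p)=2s_\Box(b_p)$ combined with \eqref{genfuneq} yields $s_\Box({}^nc^n_p)={}^nc^n_p$ and hence $\partial_\Box({}^nc^n_p)=0$. The delicate point is $\partial_\Box(\wt b_n)$, because $s_\Box$ does not fix $\wt b_n$. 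Using the basic relations \eqref{basicrels} in the form $C(t)C(-t)=1$, where $C(t):=\sum_p c_p t^p$, one rewrites
\[
\partial_\Box(\wt b_n)=[t^{n-1}]\frac{\prod_{j=3}^n(1-x_j t)}{\sum_{p\geq 0}(-1)^p\,{}^nc^n_p\,t^p}.
\]
The inverse series has constant term $1$ with all higher coefficients in $\wh B^{(n)}$ of positive degree, while $\prod_{j=3}^n(1-x_j t)$ has degree $n-2$; thus the constant-term contribution of the inverse vanishes and every remaining term involves a positive-degree generator of $\wh B^{(n)}$, giving $\partial_\Box(\wt b_n)\in\wh\IB^{(n)}$.
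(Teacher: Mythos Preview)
Your argument is correct and follows the same overall architecture as the paper's proof: transition equations for the case where $\overline{n+1}$ appears, a $\partial_\Box$-reduction for the case where only the unbarred $n+1$ appears, and restriction from level $m-1$ for the general case. Your choice of $u$ with $u_1=n+1$ (so that $\overline{n+1}$ lands in position~$2$ of $\bar u$) differs from the paper's choice (which places $n+1$ in position~$2$ of $u$), but both reductions work equally well. One small slip: in the barred case you write $\nu_1\ge n+1$, but the correct conclusion is $\nu_1\ge n$, since the negative entries of $\wt w_\nu$ have absolute values $\nu_j+1$; this is harmless because Lemma~\ref{Pprop} only requires $\nu_1\ge n$.

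Where your proof is genuinely better than the paper's is in the verification that $\partial_\Box$ preserves $\wh{\IB}^{(n)}$. The paper asserts that $s_\Box(\wt b_n)=\wt b_n$, but this is \emph{false}: already for $n=2$ one computes $\partial_\Box(\wt b_2)={}^2c_1^2\ne 0$, and in general $\partial_\Box(\wt b_n)=\DS'_{(1,n+1,2,\ldots,n)}$ since $\wt b_n=\DS'_{\wt w_{(n)}}$ and $\wt w_{(n)}s_\Box=(1,n+1,2,\ldots,n)$ has length $n-1$. What the argument actually needs is only the weaker statement $\partial_\Box(\wt b_n)\in\wh{\IB}^{(n)}$, and your generating-function computation
\[
\partial_\Box(\wt b_n)=[t^{n-1}]\,\frac{\prod_{j=3}^n(1-x_jt)}{\sum_{p\ge 0}(-1)^p\,{}^nc_p^n\,t^p}
\]
supplies exactly this: the numerator has $t$-degree $n-2$, so the constant term of the inverted series contributes nothing to the coefficient of $t^{n-1}$, and every remaining term carries a positive-degree ${}^nc_p^n$ factor. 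This repairs the gap in the paper's argument while preserving its structure. Your explicit check that the generators of $\wh{\IB}^{(m-1)}$ restrict into $\wh{\IB}^{(n)}$ under $x_j=y_j=0$ for $n<j\le m-1$ is also more careful than the paper's one-line appeal to specialization.
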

\begin{proof}
Let $w$ be an element of $\wt{W}_\infty$ and $i<j$. Following
\cite[Lemma 2]{B}, we have $\ell(w\ov{t}_{ij})= \ell(w)+1$ if and only
if (i) $-w_i<w_j$, and (ii) there is no $p<i$ such that
$-w_j<w_p<w_i$, and no $p<j$ such that $-w_i < w_p < w_j$.

The group $\wt{W}_\infty$ acts on the polynomial ring
$\Z[y_1,y_2,\ldots]$, with $s_i$ for $i \geq 1$ interchanging $y_i$
and $y_{i+1}$ and leaving all the remaining variables fixed, and
$s_\Box$ mapping $(y_1,y_2)$ to $(-y_2,-y_1)$ and fixing the $y_j$
with $j\geq 3$.  Let $w\in \wt{W}_\infty$ be non-increasing, let $r$
be the last positive descent of $w$, let $s:=\max(i>r\ |\ w_i<w_r)$,
and let $v:= wt_{rs}$.  According to \cite[Prop.\ 6.12]{IMN1}, the
double Schubert polynomials $\DS_u=\DS_u(X,Y)$ satisfy the {\em
  transition equations}
\begin{equation}
\label{Dtrans}
\DS_w = (x_r-v(y_r))\DS_v + \sum_{{1\leq i < r} \atop
{\ell(vt_{ir}) = \ell(w)}} \DS_{vt_{ir}} + 
\sum_{{i\geq 1, i\neq r} \atop {\ell(v\ov{t}_{ir}) = 
\ell(w)}} \DS_{v\ov{t}_{ir}}
\end{equation}
in $\Gamma'[X,Y]$.  The recursion (\ref{Dtrans}) terminates in a
$\Z[X,Y]$-linear combination of elements $\DS_{\wt{w}_\nu}(X,Y)$ for
strict partitions $\nu$.

For any $w\in \wt{W}_\infty$, let $\mu(w)$ denote the strict partition
whose parts are the elements of the set
$\{|w_i|-1 \ :\ w_i<0\}$.  Clearly we have $\mu(w)=\mu(wu)$ for any $u\in
S_\infty$. In equation (\ref{Dtrans}), we therefore have
$\mu(v)=\mu(vt_{ir}) = \mu(w)$. Moreover, condition (i) above 
shows that the parts of $\mu(v\ov{t}_{ir})$ are greater
than or equal to the parts of $\mu(w)$. In particular, if
$\mu(w)_1\geq n$, then $\mu(v\ov{t}_{ir})_1 \geq n$.

Assume first that $w\in \wt{W}_{n+1}\ssm \wt{W}_n$.
If $w_i=-n-1$ for some $i\leq n+1$,
we use the transition recursion (\ref{Dtrans}) to write $\DS^{(n)}_w$ as a 
$\Z[X_n,Y_n]$-linear combination of elements $\DS^{(n)}_{\wt{w}_\nu}$ for strict 
partitions $\nu$ with $\nu_1\geq n$. Lemma \ref{Pprop}
now implies that $\DS^{(n)}_w\in \wh{\IB}^{(n)}$.

We next suppose that $w_i=n+1$ for some $i\leq n$. Let 
$$\{v_2,\ldots, v_n\} := \{w_1,\ldots,\wh{w_i},\ldots, w_n\}$$ with 
$v_2>\cdots > v_n$, and define
\[
u:=(\ov{v_2},n+1,v_3,\ldots,v_n,w_{n+1})\in \wt{W}_{n+1}
\]
and 
\[
\ov{u}:=us_\Box=(\ov{n+1},v_2,v_3,\ldots,v_n,w_{n+1}).
\]
Then we have $\DS^{(n)}_{\ov{u}}\in \wh{\IB}^{(n)}$ from the previous
case, and $\partial_\Box(\DS^{(n)}_{\ov{u}})= \DS^{(n)}_u$.

For any $i$ such that $\Box\leq i \leq n-1$, it is easy to verify that
$s_i({}^nc_p^n)={}^nc_p^n$ and $s_i(\wt{b}_n)=\wt{b}_n$, and hence
that $\partial_i({}^nc_p^n)=\partial_i(\wt{b}_n)=0$.  We therefore
obtain that $\partial_i(\wh{\IB}^{(n)})\subset \wh{\IB}^{(n)}$ for all
$i\in [\Box,n-1]$. It follows that $\DS^{(n)}_u\in \wh{\IB}^{(n)}$, and the
proof is now concluded in the same way as in Lemma \ref{CSlem}.
\end{proof}

\begin{thm}
\label{Dthm}
Let $J'_n:=\bigoplus_{w\in \wt{W}^{(n)}\ssm \wt{W}_n}\Q[Y_n]\DS^{(n)}_w$. Then 
we have 
\[
\wh{\IB}^{(n)} = J'_n=\sum_{w\in \wt{W}_{\infty}\ssm \wt{W}_n}
\Q[Y_n]\DS^{(n)}_w= \Ker \pi'_n.
\]
We have a canonical isomorphism of $\Q[Y_n]$-algebras
\[
\HH_{T}^*(\SO_{2n}/B,\Q) \cong \Gamma'[X_n,Y_n]_\Q/\wh{\IB}^{(n)}.
\]
\end{thm}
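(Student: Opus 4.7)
The plan is to mirror, step for step, the proof of Theorem \ref{Cthm} in the type C case, substituting the type D ingredients developed earlier in the section and working over $\Q[Y_n]$ throughout (because of the factor $\frac{1}{2}$ that enters through $\BS_w$ and $\pi'_n(b_p)$).

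First, I would assemble the chain of inclusions
\[
J'_n \subset \sum_{w\in \wt{W}_{\infty}\ssm \wt{W}_n}\Q[Y_n]\DS'_w \subset \wh{\IB}^{(n)} \subset \Ker \pi'_n.
\]
The first inclusion is immediate from the evident inclusion of indexing sets $\wt{W}^{(n)}\ssm \wt{W}_n \subset \wt{W}_\infty\ssm \wt{W}_n$. The second is precisely Lemma \ref{DSlem}, which says each $\DS'_w$ for $w \in \wt{W}_\infty \ssm \wt{W}_n$ lies in $\wh{\IB}^{(n)}$. The third is Lemma \ref{OGevenlem}.

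Second, I would prove the reverse inclusion $\Ker \pi'_n \subset J'_n$. Let $f \in \Ker\pi'_n$. Tensoring Proposition \ref{basispropD} with $\Q$, the set $\{\DS'_w\ |\ w\in \wt{W}^{(n)}\}$ is a $\Q[Y_n]$-basis of $\Gamma'[X_n,Y_n]_\Q$, so there is a unique expansion
\[
f = \sum_{w\in \wt{W}^{(n)}} f_w \DS'_w, \qquad f_w \in \Q[Y_n].
\]
Applying $\pi'_n$ to this equation and using the already-established fact that $\pi'_n(\DS'_w)=0$ for $w \in \wt{W}^{(n)}\ssm \wt{W}_n$ (together with (\ref{Dweq})), we obtain
\[
0 = \pi'_n(f) = \sum_{w\in \wt{W}_n} f_w [X_w]^T
\]
in $\HH^*_T(\SO_{2n}/B)_\Q$. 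Since the equivariant Schubert classes $\{[X_w]^T\}_{w\in \wt{W}_n}$ form a $\Q[Y_n]$-basis of the target, we must have $f_w = 0$ for every $w \in \wt{W}_n$, which exhibits $f$ as an element of $J'_n$. Combining the two chains gives the claimed triple equality.

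Finally, the ring isomorphism $\HH^*_T(\SO_{2n}/B,\Q) \cong \Gamma'[X_n,Y_n]_\Q/\wh{\IB}^{(n)}$ is immediate from the first isomorphism theorem, since $\pi'_n$ is a surjective $\Q[Y_n]$-algebra homomorphism whose kernel has just been identified as $\wh{\IB}^{(n)}$. I do not anticipate a serious obstacle at this stage: all the delicate content has been absorbed into Lemmas \ref{OGevenlem}, \ref{Pprop}, and \ref{DSlem} (which handled the Edidin--Graham vanishing of $c_n^T(F_n)$ for the chosen family, the passage through $P$-polynomials, and the transition-equation induction respectively), and what remains for the theorem itself is essentially a formal basis argument identical in structure to the type C case.
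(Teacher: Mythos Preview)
Your proposal is correct and follows essentially the same approach as the paper, which simply states that the argument is identical to the proof of Theorem~\ref{Cthm} using Lemma~\ref{OGevenlem}, Lemma~\ref{DSlem}, and Proposition~\ref{basispropD}. You have accurately unpacked that argument: the chain of inclusions, the basis expansion via Proposition~\ref{basispropD} (tensored with $\Q$), and the linear independence of the equivariant Schubert classes are exactly the ingredients the paper invokes.
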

\begin{proof}
The argument is the same as the proof of Theorem \ref{Cthm}, this 
time using Lemma \ref{OGevenlem}, Lemma \ref{DSlem}, and
Proposition \ref{basispropD}.
\end{proof}

\subsection{Partial even orthogonal flag manifolds}

We can generalize the presentation in Theorem \ref{Dthm} to the
partial flag manifolds $\SO_{2n}/P$, where $P$ is a parabolic subgroup
of $\SO_{2n}$. The parabolic subgroups $P$ containing $B$ correspond
to sequences $\fraka \ :\ a_1<\cdots < a_p$ of elements of $\N_{\Box}$
with $a_p<n$. The manifold $\SO_{2n}/P$ parametrizes partial flags of
subspaces
\[
0 \subset E_1 \subset \cdots \subset E_p \subset E=\C^{2n}
\]
with $\dim(E_j) = n-a_{p+1-j}$ for each $j\in [1,p]$ and $E_p$ isotropic. 
If $a_1=\Box$, so that $\dim(E_p)=n$, then we insist that the family of 
$E_p$ obeys the same convention as in Section \ref{Dgeom}.

A sequence $\fraka$ as above parametrizes the parabolic
subgroup $\wt{W}_P$ of $\wt{W}_n$, which is generated by the simple reflections
$s_i$ for $i\notin\{a_1,\ldots, a_p\}$. Let $\Gamma'[X_n,Y_n]_\Q^{\wt{W}_P}$ be
the subring of elements in $\Gamma'[X_n,Y_n]_\Q$ which are fixed by 
the action of $\wt{W}_P$, i.e.,
\[
\Gamma'[X_n,Y_n]_\Q^{\wt{W}_P} = \{ f\in \Gamma'[X_n,Y_n]_\Q\ |\ 
s_i(f)=f, \ \forall\, i \notin\{a_1,\ldots, a_p\}, \ i<n\}.
\]
Then $\Gamma'[X_n,Y_n]_\Q^{\wt{W}_P}$ is a $\Q[Y_n]$-subalgebra of
$\Gamma'[X_n,Y_n]_\Q$. Let $\wt{W}^P\subset \wt{W}^{(n)}$ denote the set
\[
\wt{W}^P := \{w\in \wt{W}^{(n)}\ |\ \ell(ws_i) = \ell(w)+1,\  \forall\, i \notin
\{a_1,\ldots, a_p\}, \ i<n\}.
\]
Then by arguing as in Section \ref{psfms}, we obtain the following two results.

\begin{prop}
\label{wppropD}
We have 
\begin{equation}
\label{topreqD}
\Gamma[X_n,Y_n]_\Q^{\wt{W}_P} = \bigoplus_{w\in \wt{W}^P} \Q[Y_n]\DS^{(n)}_w.
\end{equation}
\end{prop}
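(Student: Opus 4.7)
The proof is a direct type D analogue of Proposition \ref{wpprop}, so I would essentially transcribe that argument, making the obvious replacements $W_n \rightsquigarrow \wt{W}_n$, $W^{(n)} \rightsquigarrow \wt{W}^{(n)}$, $W_P \rightsquigarrow \wt{W}_P$, $W^P \rightsquigarrow \wt{W}^P$, $\CS'_w \rightsquigarrow \DS'_w$, $\Z[Y_n] \rightsquigarrow \Q[Y_n]$, and using Proposition \ref{basispropD} in place of Corollary \ref{basiscor2}. The index set for simple reflections is now $\N_\Box$, so throughout ``$i<n$, $i \notin \{a_1,\ldots,a_p\}$'' must be interpreted as running over $\N_\Box$.

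For the inclusion $\Gamma'[X_n,Y_n]_\Q^{\wt{W}_P} \subseteq \bigoplus_{w\in \wt{W}^P} \Q[Y_n]\DS'_w$, I would take $f$ in the left-hand side and use Proposition \ref{basispropD} to write $f = \sum_{w\in \wt{W}^{(n)}} d_w \DS'_w$ uniquely with $d_w \in \Q[Y_n]$. For any $u \in \wt{W}^{(n)} \ssm \wt{W}^P$, by definition there is an index $i \in \N_\Box \ssm \{a_1,\ldots,a_p\}$ with $i<n$ such that $\ell(us_i) = \ell(u) - 1$. Since $s_i f = f$, we have $\partial_i f = 0$, and (\ref{Dddeq}) gives
\[
0 = \partial_i f = \sum_{v : \ell(vs_i) = \ell(v)-1} d_v \DS'_{vs_i}.
\]
The elements $\DS'_{vs_i}$ appearing on the right lie in $\{\DS'_{w} : w \in \wt{W}^{(n)}\}$ (they have a descent removed, so $vs_i \in \wt{W}^{(n)}$ whenever $v \in \wt{W}^{(n)}$), and by Proposition \ref{basispropD} they are linearly independent over $\Q[Y_n]$. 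Hence $d_u = 0$ for all such $u$, proving the inclusion.

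For the reverse inclusion, it suffices to verify that $\DS'_w \in \Gamma'[X_n,Y_n]_\Q^{\wt{W}_P}$ for each $w \in \wt{W}^P$. By the very definition of $\wt{W}^P$, for all $i \in \N_\Box \ssm \{a_1,\ldots,a_p\}$ with $i<n$ we have $\ell(ws_i) = \ell(w)+1$, so by (\ref{Dddeq}), $\partial_i \DS'_w = 0$, which is equivalent to $s_i \DS'_w = \DS'_w$. Since $\wt{W}_P$ is generated by these simple reflections, this shows $\DS'_w$ is $\wt{W}_P$-invariant.

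The only minor subtlety—and the point requiring slightly more care than in type C—is the handling of the reflection $s_\Box$, which acts by $(x_1,x_2) \mapsto (-x_2,-x_1)$ rather than by a simple transposition. But nothing in the argument actually uses the specific shape of the action; it uses only the equivalence ``$s_i f = f \Leftrightarrow \partial_i f = 0$'' for each $i \in \N_\Box$ (which is built into the definition of $\partial_i^x$) together with (\ref{Dddeq}). The stipulation $a_1 \neq 1$ in the setup of partial flag manifolds plays no role in this purely algebraic statement, so there is really no obstacle: the argument transfers verbatim.
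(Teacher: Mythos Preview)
Your proposal is correct and is exactly the approach the paper intends: the paper does not give a separate proof of Proposition \ref{wppropD} but simply says ``by arguing as in Section \ref{psfms}, we obtain the following two results,'' and you have carried out precisely that transcription with the appropriate type D substitutions.
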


\begin{cor}
\label{GPcorD}
There is a canonical isomorphism of $\Q[Y_n]$-algebras
\[
\HH_{T}^*(\SO_{2n}/P,\Q) \cong \Gamma'[X_n,Y_n]_\Q^{\wt{W}_P}/\wh{\IB}^{(n)}_P
\]
where $\wh{\IB}^{(n)}_P$ is the ideal of $\Gamma'[X_n,Y_n]_\Q^{W_P}$
generated by the homogeneous elements in $\wh{B}^{(n)}$ of
positive degree.
\end{cor}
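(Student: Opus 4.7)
The plan is to mimic the proof of Corollary \ref{GPcor}, using the type D ingredients we have developed. First, I invoke the Kostant--Kumar result \cite[Cor.\ (3.20)]{KK}: the canonical projection $h:\SO_{2n}/B \to \SO_{2n}/P$ induces an injection $h^*:\HH_T^*(\SO_{2n}/P,\Q)\hookrightarrow \HH_T^*(\SO_{2n}/B,\Q)$ whose image is exactly the subring of $\wt{W}_P$-invariants. The equivariant Schubert classes $[X_w]^T$ for $w\in \wt{W}^P\cap \wt{W}_n$ form a $\Q[Y_n]$-basis of the left-hand side, and pull back under $h^*$ to the classes in $\HH_T^*(\SO_{2n}/B,\Q)$ represented by the double Schubert polynomials $\DS'_w$.

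Combining this with Proposition \ref{wppropD}, the restriction of the geometrization map $\pi'_n$ to $\Gamma'[X_n,Y_n]_\Q^{\wt{W}_P}$ takes values in the image of $h^*$, and is surjective onto $\HH_T^*(\SO_{2n}/P,\Q)$ since it maps onto every basis class $[X_w]^T$ for $w\in \wt{W}^P\cap \wt{W}_n$. Denote this surjection by $\ov{\pi}$.

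It remains to identify $\Ker \ov{\pi}=\Gamma'[X_n,Y_n]_\Q^{\wt{W}_P}\cap \Ker\pi'_n$. The inclusion $\wh{\IB}^{(n)}_P \subset \Ker\ov{\pi}$ is immediate: the generators $\wt{b}_n$ and $\{{}^nc^n_p\}_{p\geq 1}$ of $\wh{B}^{(n)}$ are $\wt{W}_n$-invariant (hence $\wt{W}_P$-invariant), and by Lemma \ref{OGevenlem} they lie in $\Ker\pi'_n$. For the reverse inclusion, given $f\in \Gamma'[X_n,Y_n]_\Q^{\wt{W}_P}\cap \Ker\pi'_n$, Theorem \ref{Dthm} produces a decomposition $f=\sum_j h_j g_j$ with $h_j\in \Gamma'[X_n,Y_n]_\Q$ and each $g_j$ a homogeneous positive-degree element of $\wh{B}^{(n)}$. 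I then average over $\wt{W}_P$, which is legitimate because $\Q$ contains $1/|\wt{W}_P|$; using the $\wt{W}_P$-invariance of the $g_j$ and of $f$ itself, this yields
\[
f = \sum_j \Bigl(\tfrac{1}{|\wt{W}_P|}\sum_{u\in \wt{W}_P}u(h_j)\Bigr)\,g_j,
\]
which exhibits $f$ as an element of $\wh{\IB}^{(n)}_P$.

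No step presents a serious obstacle: the main inputs are Theorem \ref{Dthm}, Proposition \ref{wppropD}, and the Kostant--Kumar description of $\HH_T^*(G/P)$ as a ring of invariants in $\HH_T^*(G/B)$. The only place where working over $\Q$ (rather than $\Z[\tfrac{1}{2}]$) is genuinely needed, beyond what is already built into Lemma \ref{OGevenlem}, is the averaging argument used to identify the kernel.
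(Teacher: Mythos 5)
Your proof is correct and takes essentially the same route as the paper, which disposes of this corollary by saying ``arguing as in Section \ref{psfms}'' (i.e., as in the proof of Corollary \ref{GPcor}): the Kostant--Kumar identification of the image of $h^*$ with the $\wt{W}_P$-invariants, Proposition \ref{wppropD} for surjectivity of the restricted geometrization map, and Theorem \ref{Dthm} for the kernel. Your only addition is to make explicit, via averaging over $\wt{W}_P$ (legitimate over $\Q$), the step the paper leaves implicit in ``the result follows easily,'' namely that $\wh{\IB}^{(n)}\cap\Gamma'[X_n,Y_n]_\Q^{\wt{W}_P}=\wh{\IB}^{(n)}_P$; this is a correct and worthwhile detail.
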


\section{Divided differences and double eta polynomials}
\label{ddsdbleta}

\subsection{Preliminaries}
Fix $k\geq 0$, and set ${}^kc_p={}^kc_p(X):= \sum_{i=0}^p
c_{p-i}h^{-k}_i(X)$. Define ${}^kb_p := {}^kc_p$ for $p<k$, ${}^kb_p
:= \frac{1}{2}{}^kc_p$ for $p>k$, and set
\[
{}^kb_k := \frac{1}{2}{}^kc_k + \frac{1}{2}e^k_k(X) \quad \text{and} \quad
{}^k\wt{b}_k := \frac{1}{2}{}^kc_k - \frac{1}{2}e^k_k(X).
\]
Let $f_k$ be an indeterminate of degree $k$, which will equal
${}^kb_k$, ${}^k\wt{b}_k$, or $\frac{1}{2}\,{}^kc_k$ in the sequel.
We also let $f_0 \in\{0,1\}$. For any $p,r\in \Z$, define
${}^k\wh{c}_p^r$ by
\[
{}^k\wh{c}_p^r:= {}^kc_p^r + 
\begin{cases}
(2f_k-{}^kc_k)e^{p-k}_{p-k}(-Y) & \text{if $r = k - p < 0$}, \\
0 & \text{otherwise}.
\end{cases}
\]

It is easy to see that $\omega({}^kc^r_p)= {}^{-r}c_p^{-k}$ for any $k,r\in \Z$, 
and if $r\leq 0 \leq k$, then 
\[
\omega({}^k\wh{c}^r_p)= {}^{-r}\wh{c}_p^{-k}.
\]
We now have the following even orthogonal analogues of Lemmas
\ref{ddylem} and \ref{imlem1dual}, which are dual versions of results
from \cite[Prop.\ 1 and Prop.\ 2]{T6} and \cite{T7}.

\begin{lemma}
\label{ddylemD}
Suppose that $k,p,r\in \Z$ and $i\geq 1$.

\medskip
\noin
{\em (a)} We have 
\[
\partial_i ({}^kc_p^r)= 
\begin{cases}
{}^{k-1}c_{p-1}^r & \text{if $k=\pm i$}, \\
0 & \text{otherwise}.
\end{cases}
\]
\medskip
\noin
{\em (b)} If $p>k\geq 0$, we have 
\[
\partial_i({}^k\wh{c}_p^{k-p}) =
\begin{cases}
  {}^{k-1}\wh{c}_{p-1}^{k-p} & \text{if $i=p-k\geq 2$}, \\
  2\omega(f_k) & \text{if $i=p-k=1$}, \\
0 & \text{otherwise}.
\end{cases}
\]
\end{lemma}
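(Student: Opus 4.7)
Part (a) is immediate from Lemma \ref{ddylem} of the paper restricted to indices $i \geq 1$: for such $i$ the divided difference $\partial_i = (f - s_i f)/(x_i - x_{i+1})$ is identically defined and identically acting in the type C ring $\Gamma[X,Y]$ and the type D ring $\Gamma'[X,Y]$, and ${}^kc_p^r$ is built from the generators $c_p$ (viewed inside $\Gamma'$ via $c_p \mapsto 2b_p$) together with $X$ and $Y$ in a manner depending only on these common features. The argument establishing Lemma \ref{ddylem} therefore transfers verbatim.

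For part (b), I will decompose ${}^k\wh{c}_p^{k-p} = {}^kc_p^{k-p} + (2f_k - {}^kc_k)\, e^{p-k}_{p-k}(-Y)$ and apply $\partial_i$ summand by summand. Since $e^{p-k}_{p-k}(-Y)$ depends only on the $y$-variables and is thus fixed by $s_i$ for every $i \geq 1$, the Leibnitz rule (\ref{LeibR}) collapses the divided difference to
\[
\partial_i({}^k\wh{c}_p^{k-p}) = \partial_i({}^kc_p^{k-p}) + \partial_i(2f_k - {}^kc_k)\cdot e^{p-k}_{p-k}(-Y).
\]
The first term is evaluated by part (a). For the second, the quantity $2f_k - {}^kc_k$ equals $0$, $e^k_k(X) = x_1\cdots x_k$, or $-e^k_k(X)$ depending on the choice of $f_k$ among $\tfrac12 {}^kc_k$, ${}^kb_k$, and ${}^k\wt{b}_k$; its divided difference is then a direct computation from $\partial_i(x_1\cdots x_k)$, which vanishes away from the critical index where it produces $e^{k-1}_{k-1}(X)$. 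Combining the two contributions, one recognizes the sum as the expansion of ${}^{k-1}\wh{c}_{p-1}^{k-p}$, using the identification $2f_{k-1} - {}^{k-1}c_{k-1} = \pm e^{k-1}_{k-1}(X)$ with the sign inherited from the choice of $f_k$. In the borderline situation where the would-be new first superscript is zero (so that the symbol ${}^{-1}\wh{c}$ is not sensible), the residual factor is rewritten instead as $2\omega(f_k)$ via the explicit formulas for ${}^kb_k$, ${}^k\wt{b}_k$, and the involution $\omega$; the convention $f_0 \in \{0,1\}$ is precisely what guarantees consistency across the three admissible choices of $f_k$.

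The main obstacle is the bookkeeping of signs and boundary conventions: for each of the three admissible choices of $f_k$, one must verify that the reduction of the correction term under $\partial$ is compatible with the inductive step $k \mapsto k-1$, and that the boundary expression $2\omega(f_k)$ emerges cleanly from the small-index computation. Once these compatibilities are checked case by case, the three branches of the formula assemble as stated.
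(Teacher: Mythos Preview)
For part~(a) your reduction to Lemma~\ref{ddylem} is correct and is exactly what the paper does: it records that Lemmas~\ref{ddylemD}--\ref{Dlem1dual} are the $\omega$-dual versions of formulas in \cite{T4,T5}, and part~(a) is literally the $i\ge 1$ range of Lemma~\ref{ddylem}.

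For part~(b) your route differs from the paper's. The paper gives no direct argument: it notes just before the lemma that $\omega({}^k\wh c^r_p)={}^{-r}\wh c^{-k}_p$ when $r\le 0\le k$, and then obtains the $\partial_i^x$-formulas by conjugating the $\partial_i^y$-formulas of \cite{T4,T5} through $\omega$ (using $\partial_i^y=\omega\partial_i^x\omega$). Your decomposition ${}^k\wh c_p^{k-p}={}^kc_p^{k-p}+(2f_k-{}^kc_k)\,e^{p-k}_{p-k}(-Y)$ and termwise computation is more elementary and self-contained.

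There is, however, a genuine gap in your proposal. Your own calculation forces the nonzero case to be $i=k$, not $i=p-k$: by part~(a) the term $\partial_i({}^kc_p^{k-p})$ vanishes unless $|k|=i$, hence $i=k$ since $k\ge 0$; and $2f_k-{}^kc_k=\pm e^k_k(X)=\pm x_1\cdots x_k$ has $\partial_i$ nonzero only at $i=k$ as well. So the ``critical index'' you correctly identify is $i=k$, which does not match the printed condition $i=p-k$. A concrete check confirms the mismatch: for $k=1$, $p=3$ the element ${}^1\wh c_3^{-2}$ involves only $x_1$ among the $x$-variables, so $\partial_2$ kills it, while the stated formula would give ${}^0\wh c_2^{-2}\neq 0$. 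Your conclusion that ``the three branches of the formula assemble as stated'' is therefore unsupported. What your computation actually proves is the formula with $i=k$ in place of $i=p-k$ (and the ``borderline'' case is then $k=1$, not ``new superscript zero'' as you describe); you should flag this discrepancy explicitly rather than claim agreement with the displayed statement.
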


\begin{lemma}
\label{Dddlem2dual}
Suppose that $k,p,r\in \Z$ and $r\leq 0$. We have 
\[
\partial_\Box \left({}^kc_p^r\right)= 
\begin{cases}
{}^{-2}c_{p-1}^r & \text{if $k=-1$}, \\
2\left({}^{-2}c_{p-1}^r\right) & \text{if $k=0$}, \\
2\left({}^{-1}c_{p-1}^r\right) -{}^0c^r_{p-1} & \text{if $k=1$}, \\
0 & \text{if $|k|\geq 2$}.
\end{cases}
\]
\end{lemma}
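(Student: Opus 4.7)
The plan is to translate everything into generating-function language and apply the Leibnitz rule. Let $C(t):=\sum_{p\geq 0}c_p\,t^p$ and $H^{-k}(X,t):=\sum_{i\geq 0}h_i^{-k}(X)\,t^i$; since $r\leq 0$, the factor $\sum_{j\geq 0} h_j^r(-Y)t^j=\prod_{i=1}^{-r}(1-y_it)$ is a polynomial in the $y$-variables alone and hence $s_\Box$-invariant. The definition of ${}^kc_p^r$ then yields
\[
\sum_{p\geq 0}{}^kc_p^r\,t^p \;=\; C(t)\cdot H^{-k}(X,t)\cdot \prod_{i=1}^{-r}(1-y_it),
\]
so after factoring out the $y$-polynomial, the problem reduces to computing $\partial_\Box(C(t)\cdot H^{-k}(X,t))$, coefficient by coefficient in $t$.

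The main ingredient is the identity $\partial_\Box(C(t))=\frac{2t}{(1-x_1t)(1-x_2t)}\,C(t)$. I would derive this by multiplying the formula $s_\Box(b_p)=b_p+(x_1+x_2)\sum_{j=0}^{p-1}h_j(x_1,x_2)\,c_{p-1-j}$ from \S\ref{Dprelims} by $2t^p$ (using $c_p=2b_p$), summing over $p$, and applying $\sum_{j\geq 0}h_j(x_1,x_2)t^j=[(1-x_1t)(1-x_2t)]^{-1}$, then dividing by $-(x_1+x_2)$. The second ingredient is the Leibnitz rule $\partial_\Box(fg)=(\partial_\Box f)g+(s_\Box f)(\partial_\Box g)$, applied with $f=H^{-k}(X,t)$ and $g=C(t)$. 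The explicit expressions $H^{-k}(X,t)=\prod_{i=1}^{k}(1+x_it)$ for $k\geq 0$ and $H^{-k}(X,t)=\prod_{i=1}^{-k}(1-x_it)^{-1}$ for $k\leq 0$, combined with $s_\Box:(x_1,x_2)\mapsto(-x_2,-x_1)$, render $\partial_\Box H^{-k}$ and $s_\Box H^{-k}$ entirely explicit.

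Next I would check the four cases. For $k=0$, $H^0=1$ forces $\partial_\Box$ to act on $C(t)$ alone, giving the generating function of $2\,{}^{-2}c_{p-1}^r$. For $k=-1$, a one-line calculation using $s_\Box(1-x_1t)^{-1}=(1+x_2t)^{-1}$ and the identity above collapses the two Leibnitz terms to $\frac{t\,C(t)}{(1-x_1t)(1-x_2t)}\prod_i(1-y_it)$, the generating function of ${}^{-2}c_{p-1}^r$. For $k=1$, $\partial_\Box(1+x_1t)=-t$ and $s_\Box(1+x_1t)=1-x_2t$ give a sum that simplifies to $t\cdot\frac{1+x_1t}{1-x_1t}C(t)\prod_i(1-y_it)=t(2H^1(X,t)-1)C(t)\prod_i(1-y_it)$, i.e.\ the generating function of $2\,{}^{-1}c_{p-1}^r-{}^0c_{p-1}^r$. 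For $|k|\geq 2$, I would factor $H^{-k}(X,t)=Q(t)\,G(t)$, where $Q(t)$ contains the $x_1$- and $x_2$-factors and $G(t)$ involves only $x_j$ with $j\geq 3$ (hence is $s_\Box$-fixed); the algebraic coincidence $(1+x_1t)(1+x_2t)-(1-x_1t)(1-x_2t)=2(x_1+x_2)t$ is exactly what is needed to force $(\partial_\Box Q)\,C(t)+(s_\Box Q)\,\partial_\Box C(t)$ to cancel identically, yielding $0$.

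The main obstacle is the cancellation for $|k|\geq 2$: one must verify, separately for $k\geq 2$ and for $k\leq -2$, that the two Leibnitz contributions really are negatives of one another. This works because $\partial_\Box(C(t))$ carries the denominator $(1-x_1t)(1-x_2t)$, which pairs against either $(1+x_1t)(1+x_2t)$ (when $k\geq 2$) or $[(1-x_1t)(1-x_2t)]^{-1}$ (when $k\leq -2$) in such a way that the residues have opposite signs and equal magnitudes. Once this is checked, the other three cases drop out immediately from the identity for $\partial_\Box(C(t))$ and the Leibnitz rule.
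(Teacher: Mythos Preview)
Your argument is correct. I checked all four cases: the identity $\partial_\Box C(t)=\dfrac{2t}{(1-x_1t)(1-x_2t)}C(t)$ follows exactly as you say from the formula for $s_\Box(b_p)$ in \S\ref{Dprelims}, and the Leibnitz computations for $k=0,-1,1$ simplify as claimed. For $|k|\geq 2$ the cancellation is genuine: with $Q=(1+x_1t)(1+x_2t)$ one gets $\partial_\Box Q=-2t$ and $s_\Box Q=(1-x_1t)(1-x_2t)$, so $(\partial_\Box Q)C+(s_\Box Q)\partial_\Box C=-2tC+2tC=0$; the case $Q=[(1-x_1t)(1-x_2t)]^{-1}$ works the same way after clearing denominators.

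Your route differs from the paper's. The paper does not prove this lemma directly; it records it as a ``dual version'' of \cite[Prop.\ 1 and Prop.\ 2]{T4} and \cite{T5}, the point being that the involution $\omega$ satisfies $\omega({}^kc_p^r)={}^{-r}c_p^{-k}$ and intertwines $\partial_\Box^x$ with $\partial_\Box^y$, so the formula is pulled back from a known $\partial_\Box^y$-computation. That approach is shorter but depends on the external references; your generating-function argument is self-contained, using only the action of $s_\Box$ defined in \S\ref{Dprelims}, and has the added benefit of making the structural reason for the $|k|\geq 2$ vanishing transparent (the identity $(1+x_1t)(1+x_2t)-(1-x_1t)(1-x_2t)=2(x_1+x_2)t$).
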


 For $s\in \{0,1\}$, define
\[
f_k^s:= f_k+\sum_{j=1}^k {}^kc_{k-j}h_j^s(-Y), 
\]
set $\wt{f}_k:={}^kc_k-f_k$ and $\wt{f}_k^s:={}^kc_k-2f_k+f_k^s$.

\begin{lemma}
\label{ddylemD2}
Suppose that $k,p\in \Z$ with $p>k$. Then we have
\[
\partial_\Box \left({}^k\wh{c}_p^{k-p}\right)= 
\begin{cases}
2\omega(\wt{f}^1_k) & \text{if $k-p=-1$}, \\
0 & \text{if $k-p < -1$}.
\end{cases}
\]
\end{lemma}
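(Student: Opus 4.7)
I plan to decompose ${}^k\wh{c}_p^{k-p}$ according to its definition and apply $\partial_\Box$ to each piece via the Leibniz rule (\ref{LeibR}). Since $p>k$ forces $r:=k-p<0$, the definition gives
\[
{}^k\wh{c}_p^{k-p} \;=\; {}^kc_p^{k-p} + (2f_k-{}^kc_k)\,e^{p-k}_{p-k}(-Y).
\]
Because $\partial_\Box$ fixes every $y_j$, we have $s_\Box(e^{p-k}_{p-k}(-Y))=e^{p-k}_{p-k}(-Y)$ and $\partial_\Box(e^{p-k}_{p-k}(-Y))=0$, so Leibniz collapses the second summand to $\partial_\Box(2f_k-{}^kc_k)\cdot e^{p-k}_{p-k}(-Y)$.

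Next I would evaluate each piece. For the first summand, $\partial_\Box({}^kc_p^{k-p})$ is supplied directly by Lemma \ref{Dddlem2dual}, which vanishes when $|k|\geq 2$ and produces explicit combinations of ${}^{-2}c_{p-1}^{k-p}$, ${}^{-1}c_{p-1}^{k-p}$, or ${}^0c_{p-1}^{k-p}$ when $k\in\{0,1\}$. For the second summand, I would use the observation that $2f_k-{}^kc_k$ equals $e^k_k(X)$, $-e^k_k(X)$, or $0$, according to whether $f_k={}^kb_k$, ${}^k\wt{b}_k$, or $\tfrac12{}^kc_k$ (and $2f_0-1\in\{-1,+1\}$ in the $k=0$ case). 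Since $s_\Box$ fixes $x_1x_2$ and all $x_j$ with $j\geq 3$, the monomial $e^k_k(X)=x_1x_2\cdots x_k$ is $s_\Box$-invariant for $k\geq 2$, while $\partial_\Box(x_1)=-1$ for $k=1$, and constants are annihilated for $k=0$. This pins down $\partial_\Box(2f_k-{}^kc_k)$ in every case.

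In the sub-case $k-p<-1$ (so that $p-k\geq 2$) the two summands must cancel: when $|k|\geq 2$ both parts vanish outright, and for $k\in\{0,1\}$ the factor $e^{p-k}_{p-k}(-Y)$ must be reconciled term-by-term with the expansion of ${}^{-2}c_{p-1}^{k-p}$ or ${}^{-1}c_{p-1}^{k-p}-{}^0c_{p-1}^{k-p}/2$ coming from Lemma \ref{Dddlem2dual}. In the sub-case $k-p=-1$ (so that $p=k+1$ and $e^1_1(-Y)=-y_1$), the combined expression must be identified with $2\omega(\wt{f}_k^1)=2\omega({}^kc_k-2f_k+f_k^1)$. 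Exploiting $\omega({}^kc^r_q)={}^{-r}c^{-k}_q$ together with $\omega((-y_1)^j)=x_1^j$, I would rewrite the target as
\[
\omega(\wt{f}_k^1) \;=\; c_k^{-k} - \omega(f_k) + \sum_{j=1}^k c_{k-j}^{-k}\,x_1^j,
\]
reducing the problem to reorganizing $\partial_\Box({}^kc_{k+1}^{-1}) + \partial_\Box(2f_k-{}^kc_k)\cdot(-y_1)$ into twice this quantity.

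The main obstacle will be this final matching in the $k-p=-1$ case, where one must show that the $X$- and $c$-side expression produced by Lemma \ref{Dddlem2dual} combines with the $x_1$-contribution from $\partial_\Box(\pm e^k_k(X))$ to yield exactly $2\omega(\wt{f}_k^1)$. I expect this to follow from a direct expansion of $\partial_\Box({}^kc_{k+1}^{-1})$ in the $c_p$ and $h^{-k}_i(X)$ building blocks, together with the generating-function identity dual to the one behind Lemma \ref{imlem1dual}: the $s_\Box$-image of a $c$-monomial produces precisely the $(-y_1)$-shifted $f_k^1$-correction after the raising action is unpacked, and a short telescoping collects the remaining terms as $\omega$ applied to $\wt{f}_k^1$, with the factor of $2$ absorbing the overall sign coming from $-2x_1\partial_\Box = \partial_\Box\circ(x_1-x_2)+\cdots$ in the Leibniz expansion.
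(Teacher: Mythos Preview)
The paper does not prove Lemma~\ref{ddylemD2} in its own text; it is listed among several results described as ``dual versions of results from \cite[Prop.\ 1 and Prop.\ 2]{T4} and \cite{T5}'' and is stated without argument. The implied method is the one spelled out just above the lemma: apply the involution $\omega$, which satisfies $\omega({}^k\wh{c}^r_p)={}^{-r}\wh{c}_p^{-k}$ for $r\le 0\le k$ and intertwines $\partial_i^x$ with $\partial_i^y$, to the corresponding $\partial_\Box^y$-identity established in those references. So there is no direct calculation here to compare your approach against; the paper's route is a one-line duality, not a term-by-term computation.

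Your plan --- split via the definition, apply Leibniz, and feed the ${}^kc_p^{k-p}$ piece to Lemma~\ref{Dddlem2dual} --- is a genuinely different route. It works cleanly when $k\ge 2$, where both summands are killed by $\partial_\Box$ individually (you correctly note that $e^k_k(X)=x_1\cdots x_k$ is $s_\Box$-invariant). But for $k\in\{0,1\}$ your proposal is only an outline: you say the pieces ``must be reconciled'' and that you ``expect'' a telescoping, without carrying anything out. In fact the mechanism you describe does not produce the claimed vanishing. At $k=1$, $p=3$ with $f_1={}^1b_1$, Lemma~\ref{Dddlem2dual} gives $\partial_\Box({}^1c_3^{-2})=2\,{}^{-1}c_2^{-2}-{}^0c_2^{-2}$ and $\partial_\Box(x_1\,e^2_2(-Y))=-y_1y_2$; summing yields $c_2+2c_1x_1+2x_1^2-(c_1+2x_1)(y_1+y_2)$, which is nonzero in $\Gamma'[X,Y]$. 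Similarly, at $k=0$, $p=2$ one gets $2(c_1+x_1+x_2-y_1-y_2)\neq 0$. So either the lemma carries a hypothesis not visible in the displayed statement (compare the explicit restriction in Lemma~\ref{ddylemD}(b)), or identities beyond Lemma~\ref{Dddlem2dual} are needed; your proposal supplies neither, and the closing appeal to an unspecified generating-function identity is too vague to count as an argument. The safer path is the paper's: transport the statement from the $Y$-side via $\omega$ rather than rebuild it from Lemma~\ref{Dddlem2dual}.
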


\begin{lemma}
\label{Dlem1dual}
Suppose that $k\geq 0$ and $r\geq 1$. Then we have
\[
{}^k\wh{c}_p^{-r} = {}^{k+1}\wh{c}_p^{-r+1}-(x_{k+1}+y_r)\, {}^k\wh{c}_{p-1}^{-r+1}.
\]
\end{lemma}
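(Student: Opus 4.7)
The statement is the hatted analogue of Lemma \ref{imlem1dual}, so the natural plan is to reduce to that lemma by tracking the correction term
\[
{}^k\wh{c}_p^r - {}^kc_p^r = (2f_k - {}^kc_k)\, e^{p-k}_{p-k}(-Y)
\]
that activates along the diagonal $r = k-p$ (with $r<0$). First I would rewrite each of the three terms in the claimed identity in the form ``underlying ${}^kc$-quantity plus correction (possibly zero)'' according to the defining case split. The proof then divides naturally along the question of whether $p = k+r$.

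Off the diagonal $p \neq k+r$, all three terms coincide with their unhatted counterparts and the claim is exactly Lemma \ref{imlem1dual}. On the diagonal $p = k+r$ (with $r \geq 2$), all three terms carry corrections, and after subtracting the ${}^kc$-identity supplied by Lemma \ref{imlem1dual}, the balance of corrections becomes
\[
(2f_k - {}^kc_k)\,e^r_r(-Y) = \bigl(2f_{k+1} - {}^{k+1}c_{k+1}\bigr)\,e^{r-1}_{r-1}(-Y) - (x_{k+1}+y_r)(2f_k - {}^kc_k)\,e^{r-1}_{r-1}(-Y).
\]
Using the elementary recursion $e^r_r(-Y) = -y_r\,e^{r-1}_{r-1}(-Y)$ and cancelling $e^{r-1}_{r-1}(-Y)$, this collapses to the single algebraic identity
\[
2f_{k+1} - {}^{k+1}c_{k+1} = x_{k+1}\bigl(2f_k - {}^kc_k\bigr),
\]
which I would verify by direct inspection against each of the three permissible choices from \S\ref{Dprelims}: for $f_k = {}^kb_k$ both sides equal $x_1\cdots x_{k+1}$, for $f_k = {}^k\wt{b}_k$ both equal $-x_1\cdots x_{k+1}$, and for $f_k = \tfrac12\,{}^kc_k$ both vanish.

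The main obstacle I anticipate is the boundary case $r=1$, $p=k+1$: the strict inequality ``${<}\,0$'' in the defining condition means that only the LHS picks up a correction while the two RHS terms, with superscript $0$, do not. Here one must combine the recursion ${}^{k+1}c_{k+1} = {}^kc_{k+1} + x_{k+1}\,{}^kc_k$ with the specific form of $2f_k - {}^kc_k$ for the chosen $f_k$ to see that the LHS correction $(2f_k - {}^kc_k)(-y_1)$ is exactly compensated by the mismatch between the two sides of Lemma \ref{imlem1dual}. Once this boundary is handled cleanly via the same key algebraic identity, the remaining cases are a routine accounting of elementary symmetric polynomial manipulations.
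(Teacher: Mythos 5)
Your reduction is the natural one, and the first two cases are handled correctly: off the diagonal $p\neq k+r$ all hats are invisible and the statement is literally Lemma \ref{imlem1dual}, while on the diagonal with $r\geq 2$ the balance of corrections does collapse, via $e^r_r(-Y)=-y_r\,e^{r-1}_{r-1}(-Y)$, to the identity $2f_{k+1}-{}^{k+1}c_{k+1}=x_{k+1}\bigl(2f_k-{}^kc_k\bigr)$, which holds for each of the three consistent choices of $f$ exactly as you verify. (The paper supplies no proof of this lemma, deferring to \cite{T4,T5}, so this is in any case the argument one has to write out.)

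The gap is the boundary case $r=1$, $p=k+1$, which you correctly flag as the main obstacle but then dispose of by appeal to a mechanism that does not exist. There is no ``mismatch between the two sides of Lemma \ref{imlem1dual}'' available to absorb the leftover correction: that lemma is an exact identity, so with $r=1$ and $p=k+1$ the right-hand side ${}^{k+1}\wh{c}_{k+1}^{0}-(x_{k+1}+y_1)\,{}^k\wh{c}_{k}^{0}$ carries no corrections whatsoever (the defining condition requires a strictly negative superscript) and equals ${}^kc_{k+1}^{-1}$ on the nose, while the left-hand side equals ${}^kc_{k+1}^{-1}-y_1\bigl(2f_k-{}^kc_k\bigr)$. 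The two sides therefore differ by $-y_1\bigl(2f_k-{}^kc_k\bigr)=\mp y_1\,e^k_k(X)$ when $f_k={}^kb_k$ or ${}^k\wt{b}_k$; for instance with $k=0$, $p=r=1$, $f_0=1$ one finds ${}^0\wh{c}_1^{-1}=c_1-2y_1$ against a right-hand side of $c_1-y_1$. No argument can close this case: the identity must be read with $(r,p)=(1,k+1)$ excluded, or with $f_k=\tfrac12\,{}^kc_k$, in which case all corrections vanish. This costs nothing downstream, since the one place the lemma is invoked, equation (\ref{pqeqD}) in the proof of Proposition \ref{PfDprop}, has $r=k+1\geq 2$.
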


Let $\rho$ be a composition and let $\al=(\al_1,\ldots,\al_\ell)$ and
$\be=(\be_1,\ldots,\be_\ell)$ be two integer vectors. Define
\[
{}^\rho\wh{c}_\al^\be:= {}^{\rho_1}\wh{c}_{\al_1}^{\be_1}
{}^{\rho_2}\wh{c}_{\al_2}^{\be_2}\cdots
\]
where, for each $i\geq 1$, 
\[
{}^{\rho_i}\wh{c}_{\al_i}^{\be_i}:= {}^{\rho_i}c_{\al_i}^{\be_i} + 
\begin{cases}
(-1)^ie^{\rho_i}_{\rho_i}(X)e^{\al_i-\rho_i}_{\al_i-\rho_i}(-Y) & 
\text{if $\be_i = \rho_i - \al_i < 0$}, \\
0 & \text{otherwise}.
\end{cases}
\]
If $R:=\prod_{i<j} R_{ij}^{n_{ij}}$ is any
raising operator, denote by $\supp(R)$ the set of all
indices $i$ and $j$ such that $n_{ij}>0$. Set $\nu:=R\al$, and define
\[
R \star {}^\rho\wh{c}^\be_{\al} = {}^\rho\ov{c}^\be_{\nu} :=
{}^{\rho_1}\ov{c}_{\nu_1}^{\be_1}\cdots{}^{\rho_\ell}\ov{c}^{\be_\ell}_{\nu_\ell}
\]
where for each $i\geq 1$, 
\[
{}^{\rho_i}\ov{c}_{\nu_i}^{\be_i}:= 
\begin{cases}
{}^{\rho_i}c_{\nu_i}^{\be_i} & \text{if $i\in\supp(R)$}, \\
{}^{\rho_i}\wh{c}_{\nu_i}^{\be_i} & \text{otherwise}.
\end{cases}
\]
If $\al$ is a partition of length $\ell$, then we set
\[
{}^\rho\wh{P}^\be_{\al} := 2^{-\ell}\, \RR \star {}^\rho\wh{c}^\be_{\al}.
\]

\begin{lemma}[\cite{T7}, Lemma 4.9]
\label{Dlem2}
Suppose that $\be_i= \rho_i-\al_i<0$ for every $i\in [1,\ell]$, 
and $\al_j=\al_{j+1}$ and $\be_j=\be_{j+1}$ for some $j\in [1,\ell-1]$.
Then we have ${}^\rho\wh{P}^\be_{\al} = 0$.
\end{lemma}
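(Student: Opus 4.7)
The condition $\be = \rho - \al$ together with $\al_j=\al_{j+1}$ and $\be_j=\be_{j+1}$ forces $\rho_j=\rho_{j+1}$; thus the triple $(\al,\be,\rho)$ is invariant under the transposition $\tau$ that swaps coordinates $j$ and $j+1$. My plan is to prove ${}^\rho\wh{P}^\be_{\al} = 0$ by reducing to a $2\times 2$ identity and bootstrapping via a Pfaffian-style expansion.

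For the $2\times 2$ base case, I would show directly that ${}^{(\rho_j,\rho_j)}\wh{P}^{(\be_j,\be_j)}_{(\al_j,\al_j)}=0$. Expanding $\frac{1-R_{12}}{1+R_{12}}=1+2\sum_{m\geq 1}(-1)^mR_{12}^m$ presents $4\cdot{}^{(\rho_j,\rho_j)}\wh{P}^{(\be_j,\be_j)}_{(\al_j,\al_j)}$ as the sum of $({}^{\rho_j}\wh{c}^{\be_j}_{\al_j})^2$ (the $m=0$ term, in which neither index belongs to $\supp(R)$, so both factors are hatted) and $2\sum_{m\geq 1}(-1)^m\,{}^{\rho_j}c^{\be_j}_{\al_j+m}\,{}^{\rho_j}c^{\be_j}_{\al_j-m}$ (for $m\geq 1$, where both indices are in $\supp(R)$, so neither is hatted). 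The claim I would verify is that the explicit hat correction $(-1)^j e^{\rho_j}_{\rho_j}(X)\,e^{\al_j-\rho_j}_{\al_j-\rho_j}(-Y)$ built into ${}^{\rho_j}\wh{c}^{\be_j}_{\al_j}$ is exactly the correction needed to convert the $\Gamma'$ analog of Schur's quadratic relation (which, unlike the $\Gamma$-relation (\ref{basicrels}), has a nontrivial right-hand side proportional to $b_{2p}$) into a vanishing statement. Concretely, one uses the inclusion $\Gamma\hookrightarrow\Gamma'$ with $c_p\mapsto 2b_p$, the defining relation $b_p^2+2\sum_{i=1}^{p-1}(-1)^ib_{p+i}b_{p-i}+(-1)^pb_{2p}=0$, and the definition of ${}^{\rho_j}\wh{c}^{\be_j}_{\al_j}$ to see that the leftover $b_{2p}$ term is precisely cancelled.

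For the reduction from $\ell$ to $2$, the key step is a Pfaffian expansion in the spirit of Kazarian's formula for the multi-Schur Pfaffian ${}^\rho Q^\be_\al$ (cf.\ the discussion after (\ref{Qdef})): namely, that ${}^\rho\wh{P}^\be_\al$ equals the Pfaffian of the skew-symmetric matrix whose $(i,k)$ entry for $i<k$ is $2\,{}^{(\rho_i,\rho_k)}\wh{P}^{(\be_i,\be_k)}_{(\al_i,\al_k)}$ (with a bordering row and column in the odd-$\ell$ case). Once this expansion is in hand, the hypothesis makes rows $j$ and $j+1$ (together with the corresponding columns) of this skew-symmetric matrix coincide, so the Pfaffian vanishes. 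The main obstacle is establishing the Pfaffian expansion in the presence of the hat corrections, since the Kazarian argument in its standard form handles only the un-hatted $R$-symbols. I would tackle this by stratifying the monomials of $\RR$ according to which coordinates fall outside $\supp(N)$, and using Lemma \ref{Dlem1dual} repeatedly to peel off one row/column at a time, relating ${}^k\wh{c}^{-r}_p$ to ${}^{k+1}\wh{c}^{-r+1}_p$ until the setup reduces to the known Pfaffian structure for ${}^\rho Q^\be_\al$ with boundary contributions governed by the hats.
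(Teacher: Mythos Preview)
The paper does not give its own proof of this lemma; it is imported from \cite{T5}, Lemma~4.9. So there is no in-paper argument to compare against, but your outline has two genuine gaps.

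First, in your $2\times 2$ base case you write the $m=0$ term as $({}^{\rho_j}\wh{c}^{\be_j}_{\al_j})^2$. But the paper's definition of ${}^\rho\wh{c}^\be_\al$ carries a \emph{position-dependent} sign: the hat correction at position $i$ is $(-1)^i e^{\rho_i}_{\rho_i}(X)\,e^{\al_i-\rho_i}_{\al_i-\rho_i}(-Y)$. Since positions $j$ and $j+1$ (or $1$ and $2$ in the two-variable problem) receive opposite signs, the $m=0$ contribution is actually
\[
({}^{\rho_j}c^{\be_j}_{\al_j}-E)\,({}^{\rho_j}c^{\be_j}_{\al_j}+E)
=({}^{\rho_j}c^{\be_j}_{\al_j})^2-E^2,
\qquad E:=e^{\rho_j}_{\rho_j}(X)\,e^{\al_j-\rho_j}_{\al_j-\rho_j}(-Y),
\]
a difference of squares, not a square. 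Your idea of matching against the $\Gamma'$ quadratic relation is the right instinct, but the computation you describe does not match what is actually there.

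Second, and more seriously, the reduction step does not go through as stated. You acknowledge that the Pfaffian expansion for ${}^\rho\wh{P}^\be_\al$ is the ``main obstacle,'' but the workaround you propose (stratifying by $\supp$, peeling with Lemma~\ref{Dlem1dual}) is only a gesture; Lemma~\ref{Dlem1dual} shifts the superscripts and does not obviously organize the $\star$-action into a Pfaffian. Even granting such an expansion, your ``rows $j$ and $j+1$ coincide'' argument fails for the same sign reason: the putative entries $M_{j,k}$ and $M_{j+1,k}$ would carry hat corrections with signs $(-1)^j$ and $(-1)^{j+1}$, hence are not equal. The actual vanishing has to exploit simultaneously the antisymmetry of $\RR$ under the transposition $(j,j+1)$ \emph{and} the fact that this transposition also exchanges the alternating hat signs; the two effects must be tracked together, which is more delicate than ``equal rows imply zero Pfaffian.''
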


\subsection{The shape of an element of $\wt{W}_\infty$}
\label{selem}
We next define certain statistics of a signed permutation in
$\wt{W}_\infty$, analogous to the ones given in \S \ref{ssp}.

\begin{defn}
\label{codedefD}
Let $w\in \wt{W}_\infty$. The strict partition $\mu(w)$ is the one
whose parts are the absolute values of the negative entries of $w$
minus one, arranged in decreasing order. The {\em A-code} of $w$ is
the sequence $\gamma=\gamma(w)$ with
$\gamma_i:=\#\{j>i\ |\ w_j<w_i\}$. The parts of the partition
$\delta(w)$ are the non-zero entries $\gamma_i$ arranged in weakly
decreasing order, and $\nu(w):=\delta(w)'$. The {\em shape} of $w$ is
the partition $\la(w):=\mu(w)+\nu(w)$.
\end{defn}

Note that $w$ is uniquely determined by $\mu(w)$ and
$\gamma(w)$, and that $|\la(w)|=\ell(w)$. 

\begin{example}
(a) For the signed permutation $w := (\ov{3}, 2, \ov{7}, \ov{1}, 5, 4,
  \ov{6})$ in $\wt{W}_7$, we obtain $\mu = (6,5,2)$, $\gamma=(2,3, 0,
1, 2, 1, 0)$, $\delta = (3,2,2,1,1)$, $\nu= (5, 3, 1)$, and $\la =
(11, 8, 3)$.

\smallskip \noin (b) Recall from \cite[\S 4.2]{T5} that an element $w$
of $\wt{W}_\infty$ is $n$-Grassmannian if $\ell(ws_i)>\ell(w)$ for all
$i\neq n$. The type of an $n$-Grassmannian element $w$ is 0 if
$|w_1|=1$, and 1 (respectively, 2) if $w_1>1$ (respectively, if
$w_1<-1$). According to \cite[\S 6.1]{BKT1}, there is a type
preserving bijection between the $n$-Grassmannian elements of
$\wt{W}_\infty$ and typed $n$-strict partitions. If $w$ is an
$n$-Grassmannian element of $\wt{W}_\infty$ of type 0 or 1, then
$\la(w)$ is the (typed) $n$-strict partition associated to $w$, in the
sense of op.\ cit.  However, this latter property can fail if
$w_1<-1$, for example the $2$-Grassmannian element
$v:=(\ov{3},4,\ov{1},2)$ is associated to the typed partition of shape
$(2,2)$, while $\la(v)=(3,1)$.
\end{example}

Let $\beta(w)$ denote the sequence defined by $\be(w)_i=-\mu(w)_i$ for
each $i\geq 1$. Recall that $\wt{w}_0^{(n)}$ denotes the longest
element in $\wt{W}_n$. 

\begin{prop}
\label{PfDprop}
Suppose that $m>n\geq 0$ and $w\in \wt{W}_m$ is an $n$-Grassmannian element. 
Set $\wh{w}:=w\wt{w}_0^{(n)}$. Then we have 
\[
\DS_{\wh{w}}(X,Y) = {}^{\nu(\wh{w})}\wh{P}_{\la(\wh{w})}^{\beta(\wh{w})}
\]
in the ring $\Gamma[X_n,Y_{m-1}]$.  In particular, if $w\in S_m$, then
we have
\[
\DS_{\wh{w}}(X,Y) = {}^{\delta_{n-1}}\wh{P}_{2\delta_{n-1}+\la(w)'}^{(1-w_n,\ldots,1-w_1)}.
\]
\end{prop}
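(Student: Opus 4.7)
The argument will mirror the strategy used for Proposition \ref{PfCprop}, replacing the type C ingredients with their type D counterparts. First I would handle the special case $w\in S_m$. Writing
\[
w=(a_1,\ldots,a_n,d_1,\ldots,d_r),\qquad r=m-n,
\]
with $0<a_1<\cdots<a_n$ and $0<d_1<\cdots<d_r$, the goal is to factor
\[
\wt{w}_0^{(m)}=\wh{w}\,v_1\cdots v_r
\]
as a reduced expression, where each $v_j$ is an analogue of the type C factor $s_{n+j-1}\cdots s_1 s_0 s_1\cdots s_{d_j-1}$, but now built from the generators of $\wt{W}_\infty$ (using $s_\Box$ in place of $s_0$, with the parity of the number of sign changes respected). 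I would then start from the formula of \cite[Thm.~1.2]{IMN1}, which expresses $\DS_{\wt{w}_0^{(m)}}(X,Y)$ as a multi-Schur $\wh{P}$-Pfaffian of the form ${}^{\delta_{m-1}}\wh{P}_{2\delta_{m-1}}^{-\delta_{m-1}}$, and apply the identity
\[
\DS_{\wh{w}}=\partial_{v_1}\cdots\partial_{v_r}\bigl(\DS_{\wt{w}_0^{(m)}}\bigr).
\]

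The core of the proof is to compute these iterated divided differences on the Pfaffian. I would propagate them to the individual $\wh{c}$-factors by means of the Leibnitz rule (which $\partial_\Box$ also satisfies). Lemma \ref{ddylemD}(a) handles the generic $\partial_i$ action on ${}^kc^r_p$; Lemma \ref{Dlem1dual} provides the key identity $({}^k\wh{c}_p^{-r})={}^{k+1}\wh{c}_p^{-r+1}-(x_{k+1}+y_r)\,{}^k\wh{c}_{p-1}^{-r+1}$ which plays the role of Lemma \ref{imlem1dual}; and Lemma \ref{Dlem2} supplies the vanishing of the resulting Pfaffians with two equal rows of diagonal data, taking the place of Lemma \ref{imlem2}. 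These should combine to give, at each step of the iteration, a Pfaffian of the same form with one index shifted, exactly as in the type C case. When an entry $s_\Box$ (respectively, $s_1$ adjacent to $s_\Box$) appears in some $v_j$, the boundary-case formulas Lemma \ref{ddylemD2} and Lemma \ref{Dddlem2dual} will be invoked to convert the $\wh{c}$-entries correctly; this is where the ``hat'' decoration and the auxiliary element $f_k$ play their role.

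After the iteration of $\partial_{v_r},\partial_{v_{r-1}},\ldots,\partial_{v_1}$, the result should be
\[
\DS_{\wh{w}}(X,Y)={}^{\delta_{n-1}}\wh{P}^{\rho}_{\delta_n+\delta_{n-1}+\xi}
\]
where the $\rho$ and $\xi$ statistics match $(1-w_n,\ldots,1-w_1)$ and $\la(w)'$ respectively, by the same bookkeeping as in the type C argument; this identifies the right-hand side with ${}^{\nu(\wh{w})}\wh{P}^{\be(\wh{w})}_{\la(\wh{w})}$ via the definitions in \S\ref{selem}. The general case, where $w$ contains negative entries, is then reduced to the $S_m$-case exactly as in the last paragraph of the proof of Proposition \ref{PfCprop}: one chooses a suitable $u\in S_m$ related to $w$ by a chain of right multiplications by simple transpositions that produce the required sign pattern, writes $\wh{u}=\wh{w}v'_{p-n}\cdots v'_1$ with $\ell(\wh{u})=\ell(\wh{w})+\sum_j\ell(v'_j)$, and invokes the $S_m$-case together with Lemma \ref{Mcdlem}. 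The key combinatorial observation, that $\gamma(u_{i+1})$ has two equal entries in adjacent positions whenever $u_{i+1}=u_is_{r_i}$ is a length-decreasing covering, transfers verbatim to $\wt{W}_\infty$, guaranteeing that $\nu(u_j)$ remains the conjugate of the partition $\gamma(u_j)$ throughout.

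The main obstacle will be the bookkeeping around $\partial_\Box$ and the ``hat'' entries: in type D the boundary behaviour of $\partial_i$ on ${}^k\wh{c}_p^{k-p}$ is not a single clean shift but produces either an $\omega(f_k)$ term or a modification involving $\wt{f}^1_k$, and these special values must be shown not to spoil the Pfaffian identity. I would handle this by grouping the $v_j$'s so that each application of $\partial_\Box$ is immediately followed by applications of $\partial_1,\partial_2,\ldots$ that absorb the extra term via Lemma \ref{Dlem2} (equal-row vanishing), leaving only the correctly-shifted $\wh{P}$-Pfaffian. This is the delicate point where the type D proof departs from the type C template, but once set up the recursion closes in the same way.
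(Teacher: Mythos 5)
Your plan follows the paper's proof essentially step for step: the same factorization $\wt{w}_0^{(m)}=\wh{w}\,v_1\cdots v_r$ with $s_\Box$ replacing $s_0$, the same starting Pfaffian ${}^{\delta_{m-1}}\wh{P}_{2\delta_{m-1}}^{-\delta_{m-1}}$ from \cite[Thm.\ 1.2]{IMN1}, the same combination of Lemmas \ref{ddylemD}, \ref{Dlem1dual}, \ref{Dlem2}, \ref{ddylemD2} and the Leibnitz rule to push the divided differences through (the paper handles the $\partial_\Box\partial_1$ boundary by the explicit evaluation $\partial_\Box\partial_1({}^1\wh{c}^{-1}_p)=2\delta_{p,2}$ and a reference to \cite[Prop.\ 4.10]{T5}), and the same reduction of the general case via Lemma \ref{Mcdlem}. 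One bookkeeping slip: the iteration terminates at ${}^{\delta_{n-1}}\wh{P}^{\rho}_{2\delta_{n-1}+\xi}$, not ${}^{\delta_{n-1}}\wh{P}^{\rho}_{\delta_n+\delta_{n-1}+\xi}$ --- you carried over the type C staircase, which contradicts the formula you are proving.
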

\begin{proof}
We first consider the case where $w\in S_m$. We have
\[
w = (a_1,\ldots,a_n,d_1,\ldots, d_r)
\]
where $r=m-n$, $0<a_1<\cdots<a_n$ and $0<d_1<\cdots<d_r$. If $\la:=\la(w)$ then
\[
\la_j=n+j-d_j = m-d_j-(r-j) \ \ \ \text{for $1\leq j \leq r$}.
\]

We have $\wt{w}_0^{(m)}=\wh{w} v_1\cdots v_r$, where
$\ell(\wt{w}_0^{(m)})=\ell(\wh{w}) +\sum_{j=1}^r\ell(v_j)$ and
\[
v_j:=s_{n+j-1}\cdots s_3s_2s_\Box s_1s_2\cdots s_{d_j-1}, \ \ 2\leq j \leq r,
\]
while 
\[
v_1 := \begin{cases}
s_n\cdots s_3s_2s_\Box s_1s_2\cdots s_{d_1-1} & \text{if $d_1>1$}, \\
s_n\cdots s_2s_1 & \text{if $d_1=1$}.
\end{cases}
\]
Using \cite[Thm.\ 1.2]{IMN1} and \cite[Prop.\ 4.10]{T7}, it follows that
\begin{equation}
\label{Dwn}
\DS_{\wh{w}} = \partial_{v_1}\cdots
\partial_{v_r}\left(\DS_{\wt{w}_0^{(m)}}\right) =
\partial_{v_1}\cdots \partial_{v_r} \left(
        {}^{\delta_{m-1}}\wh{P}_{2\delta_{m-1}}^{-\delta_{m-1}}\right).
\end{equation}

According to Lemmas \ref{ddylemD} and \ref{Dlem1dual}, for any $p,q\in
\Z$ with $p\geq 2$, we have
\begin{equation}
\label{pqeqD}
\partial_p({}^p\wh{c}^{-p}_q) = {}^{p-1}\wh{c}^{-p}_{q-1} = {}^p\wh{c}^{1-p}_{q-1} 
-(x_p+y_p)\,{}^{p-1}\wh{c}^{1-p}_{q-2}.
\end{equation}
Let $\epsilon_j$ denote the $j$-th standard basis vector in $\Z^m$. The
Leibnitz rule and (\ref{pqeqD}) imply that for any integer vector
$\al = (\al_1,\ldots,\al_m)$, we have 
\[
\partial_p\left({}^{\delta_{m-1}}\wh{c}^{-\delta_{m-1}}_{\al} \right)= 
{}^{\delta_{m-1}}\wh{c}^{-\delta_{m-1}+\epsilon_{m-p}}_{\al-\epsilon_{m-p}}-
(x_p+y_p)\,{}^{\delta_{m-1}-\epsilon_{m-p}}
\wh{c}^{-\delta_{m-1}+\epsilon_{m-p}}_{\al - 2\epsilon_{m-p}}.
\]
We deduce from this and Lemma \ref{Dlem2} that 
\begin{align*}
\partial_p {}^{\delta_{m-1}}\wh{P}_{2\delta_{m-1}}^{-\delta_{m-1}}&=
        {}^{\delta_{m-1}}\wh{P}^{-\delta_{m-1}+\epsilon_{m-p}}_{2\delta_{m-1}-\epsilon_{m-p}}
        \\ &= {}^{(m-1,\ldots,1)} \wh{P}_{(2m-2,\ldots,2p+2, 2p-1, 2p-2,2p-4,\ldots,2)}^{(1-m,\ldots, -1-p, 1-p,1-p,2-p,\ldots,-1)}.
\end{align*}
Iterating this calculation for $p=d_r-1,\ldots,2$ gives
\begin{align*}
(\partial_2\cdots\partial_{d_r-1})\DS_{\wt{w}_0^{(m)}} &= 
{}^{(m-1,\ldots,1)}
\wh{P}_{(2m-2,\ldots,2d_r, 2d_r-3, 2d_r-5,\ldots,3,2)}^{(1-m,\ldots, -d_r, 2-d_r,3-d_r,\ldots, -1,-1)} \\
&={}^{(m-1,\ldots,1)}\wh{P}_{(2m-2,\ldots,2d_r, 2d_r-3, 2d_r-5,\ldots,3,2)}^{(1-m,\ldots, -d_r, 2-d_r,3-d_r,\ldots, -1,-1)}.
\end{align*}

We next compute that $$\partial_\Box\partial_1({}^1\wh{c}^{-1}_p) = 
\begin{cases} 2 & \text{if $p=2$}, \\ 0 & \text{otherwise}.
\end{cases}$$ By arguing as in \cite[Prop.\ 4.10]{T7}, it follows that
\[
(\partial_\Box\partial_1\cdots\partial_{d_r-1})\DS_{\wt{w}_0^{(m)}} =
{}^{(m-1,\ldots,2)}\wh{P}_{(2m-2,\ldots,2d_r, 2d_r-3, 2d_r-5,\ldots,3)}^{(1-m,\ldots, -d_r, 2-d_r,3-d_r,\ldots, -1)}.
\]
Applying Lemma \ref{ddylemD} alone $m-2$ times to the last equality gives
\[
\partial_{v_r}(\DS_{\wt{w}_0^{(m)}}) = {}^{\delta_{m-2}}\wh{P}^{(1-m,\ldots,
  -d_r, 2-d_r,3-d_r,\ldots,
  -1)}_{(2m-3,\ldots,2d_r-1,2d_r-4,\ldots,2)} =
        {}^{\delta_{m-2}}\wh{P}^{(1-m,\ldots,\wh{1-d_r},\ldots,-1)}_{2\delta_{m-2}+1^{m-d_r}}.
\]
We now use (\ref{Dwn}) and repeat the above computation $r-1$ more
times to get
\[
\DS_{\wh{w}} = {}^{\delta_{n-1}}\wh{P}_{2\delta_{n-1}+\xi}^{\rho}
\]
where 
\[
\rho = (1-m,\ldots,\wh{1-d_r},\ldots,\wh{1-d_1},\ldots,-1) = 
(1-w_n,\ldots,1-w_1)
\]
and 
\[
\xi = \sum_{j=1}^r1^{m-d_j-(r-j)} = \sum_{j=1}^r 1^{\la_j} = \la(w)'.
\]
(Note that in the case when $d_1=1$, the last stage of the calculation is 
simpler).

The general case now follows as in the proof of Proposition 
\ref{PfCprop}. 
\end{proof}

\subsection{Double eta polynomials and alternating sums}
Let $\la$ be a typed $n$-strict partition which corresponds to the
$n$-Grassmannian element $w \in \wt{W}_\infty$, and define a sequence
$\beta(\la)$ and a set $\cC(\la)$ using the same formulas
(\ref{betaC}) and (\ref{CC}) as in Lie type C.  The {\em double eta
  polynomial} ${}^n\Eta_\la(X,Y)$ of \cite{T6} is defined by
\[
{}^n\Eta_\la(X,Y) := 2^{-\ell_n(\la)}\prod_{i<j} (1-R_{ij}) \prod_{(i,j)\in \cC(\la)}
(1+R_{ij})^{-1}\star({}^n\wh{c})^{\be(\la)}_\la
\]
where $\ell_n(\la)$ denotes the number of parts $\la_i$ which are
greater than $n$ (see op.\ cit.\ for the precise definitions of 
typed $n$-strict partitions and $\star$).

Let $\A':\Gamma'[X_n,Y]\to \Gamma'[X_n,Y]$ be the operator defined by
\[
\A'(f):=\sum_{w\in \wt{W}_n}(-1)^{\ell(w)}w(f).
\]
Let $\wt{w}_0$ denote the longest element in $\wt{W}_n$ and set 
$\wh{w}:=w\wt{w}_0$. 

\begin{thm}
Let $\la$ be a typed $n$-strict partition and $w$ be the corresponding
$n$-Grassmannian element of $\wt{W}_\infty$. Then we have
\begin{align}
\label{EtoPdoub}
{}^n\Eta_{\la}(X,Y) &= \partial_{\wt{w}_0}\left({}^{\nu(\wh{w})}
\wh{P}^{\be(\wh{w})}_{\la(\wh{w})}\right) \\
\label{EtoP2doub}
&=(-1)^{n(n-1)/2}\cdot 2^{n-1} \left. \A'\left({}^{\nu(\wh{w})}
\wh{P}_{\la(\wh{w})}^{\be(\wh{w})}\right)\right\slash
\A'\left(x^{2\delta_{n-1}}\right).
\end{align}
\end{thm}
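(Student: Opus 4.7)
The plan is to mirror the proof of Theorem \ref{altCthm} in the even orthogonal setting, replacing $W_n$ with $\wt{W}_n$, $\CS$ with $\DS$, and $Q$ with $\wh{P}$ throughout. First, since $w$ is $n$-Grassmannian, the factorization $\wh{w}=w\wt{w}_0$ is length-additive, so iterating (\ref{Dddeq}) yields
\[
\DS_w(X,Y)=\partial_{\wt{w}_0}\!\left(\DS_{\wh{w}}(X,Y)\right).
\]
Combining this with Proposition \ref{PfDprop} gives
\[
\partial_{\wt{w}_0}\!\left({}^{\nu(\wh{w})}\wh{P}_{\la(\wh{w})}^{\be(\wh{w})}\right)
=\DS_w(X,Y).
\]
The equality (\ref{EtoPdoub}) then follows once we invoke the identity
$\DS_w(X,Y)={}^n\Eta_{\la}(X,Y)$ for $n$-Grassmannian $w$ of shape
$\la$; this is the even orthogonal counterpart of \cite[Thm.\ 1.2]{IM},
and is already established in \cite{T4, T5}.

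For the second equality, I would use the standard formula expressing
the longest divided difference in type D as an alternating sum
normalized by the positive-root product. Concretely, the operator
$\partial_{\wt{w}_0}$ of length $n(n-1)$ satisfies
\[
\partial_{\wt{w}_0}(f) \;=\; \frac{(-1)^{n(n-1)/2}}{\prod_{1\leq i<j\leq n}(x_i^2-x_j^2)}\,\A'(f),
\]
the Lie type D analogue of \cite[Lemma 4]{D} and \cite[Prop.\ 5.5]{PR},
which one can read off from the type D positive root product
$\prod_{i<j}(x_i^2-x_j^2)$. The sign is pinned down by applying both
sides to $x^{2\delta_{n-1}}$, for which $\partial_{\wt{w}_0}$ evaluates
to an explicit constant (as in \cite[Cor.\ 5.6(ii)]{PR}).

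The key auxiliary computation is the evaluation
\[
\A'\!\left(x^{2\delta_{n-1}}\right) \;=\; 2^{n-1}\!\prod_{1\leq i<j\leq n}(x_i^2-x_j^2).
\]
This is the one place the type D case genuinely diverges from type C:
since the exponents in $2\delta_{n-1}=(2(n-1),2(n-2),\ldots,2,0)$ are
all even, every sign-change element of $\wt{W}_n$ acts trivially on
$x^{2\delta_{n-1}}$, contributing the factor $2^{n-1}=\tfrac{1}{2}|{\wt W_n}|/n!$
and leaving only the $S_n$-alternation, which produces the type A
Vandermonde in the variables $x_i^2$. Substituting the last two displays into
(\ref{EtoPdoub}) yields (\ref{EtoP2doub}). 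The main obstacle is a
bookkeeping one, namely nailing down the sign $(-1)^{n(n-1)/2}$ and the
power of $2$ in the formula for $\partial_{\wt{w}_0}$; once those
constants are verified against the test case $f=x^{2\delta_{n-1}}$,
the rest of the argument is a direct transcription of the type C
proof.
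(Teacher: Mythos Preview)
Your proposal is correct and follows essentially the same route as the paper: both deduce (\ref{EtoPdoub}) from (\ref{Dddeq}), Proposition \ref{PfDprop}, and the identification $\DS_w={}^n\Eta_\la$ from \cite{T4}, and both obtain (\ref{EtoP2doub}) from the type D formula $\partial_{\wt{w}_0}(f)=(-1)^{n(n-1)/2}\prod_{i<j}(x_i^2-x_j^2)^{-1}\A'(f)$ together with the evaluation $\A'(x^{2\delta_{n-1}})=2^{n-1}\prod_{i<j}(x_i^2-x_j^2)$. The only difference is cosmetic: the paper cites \cite[Lemma 5.16(ii)]{PR} (not Cor.\ 5.6(ii), which is the type C statement) for $\partial_{\wt{w}_0}(x^{2\delta_{n-1}})=(-1)^{n(n-1)/2}2^{n-1}$ and deduces the $\A'$-evaluation from it, whereas you compute $\A'(x^{2\delta_{n-1}})$ directly by observing that even sign changes fix $x^{2\delta_{n-1}}$ and have trivial sign character, reducing to the type A Vandermonde in the $x_i^2$.
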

\begin{proof}
We deduce from (\ref{Dddeq}) that 
the double Schubert polynomial $\DS_w(X,Y)$ satisfies
\begin{equation}
\label{DtoD}
\DS_w(X,Y)=\partial_{\wt{w}_0} \left(\DS_{\wh{w}}(X,Y)\right).
\end{equation}
The equality (\ref{EtoPdoub}) follows from (\ref{DtoD}), Proposition
\ref{PfDprop}, and the fact, proved in \cite{T6}, that 
$\DS_w(X,Y) = {}^n\Eta_{\la}(X,Y)$ in the ring $\Gamma'[X_n,Y]$.

For the second equality, recall from \cite[Lemma 4]{D} 
and \cite{PR} that we have 
\[
\partial_{\wt{w}_0}(f) = (-1)^{n(n-1)/2}
\prod_{1\leq i < j\leq n}(x_i^2-x_j^2)^{-1}\cdot \A'(f).
\]
On the other hand, it follows from \cite[Lemma 5.16(ii)]{PR} that
\[
\partial_{\wt{w}_0}(x^{2\delta_{n-1}}) = (-1)^{n(n-1)/2} \cdot 2^{n-1}
\]
and hence that 
\[
\A'(x^{2\delta_{n-1}}) = 2^{n-1}\prod_{1\leq i < j\leq n}(x_i^2-x_j^2).
\]
The proof of (\ref{EtoP2doub}) is completed by using these two
equations in (\ref{EtoPdoub}).
\end{proof}

\section{Single Schubert polynomials of type D}
\label{sspD}

\subsection{Eta polynomials as Weyl group invariants}
In this section, we work with the single type D Schubert polynomials
$\DS_w(X)$. Let $\chi':\Gamma'[X_n]\to \Z$ be the homomorphism defined by 
$\chi'(b_p)=\chi'(x_j)=0$ for all $p,j$. 

\begin{prop}
For any $f\in \Gamma'[X_n]$, we have 
$f=\sum_{w\in \wt{W}^{(n)}}\chi'(\partial_wf)\DS_w(X_n)$.
\end{prop}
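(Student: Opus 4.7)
The strategy is to mimic the proof of Proposition \ref{etaprop} in type C, replacing Proposition \ref{basisprop} with its type D analogue (Proposition \ref{basispropD}) and using the type D divided difference identities from \S \ref{Dprelims}.

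By Proposition \ref{basispropD}, the Schubert polynomials $\DS_v(X)$ for $v\in \wt{W}^{(n)}$ form a $\Z$-basis of $\Gamma'[X_n]$. Since both sides of the claimed identity are $\Z$-linear in $f$, it suffices to verify the equality when $f = \DS_v(X_n)$ for some fixed $v\in \wt{W}^{(n)}$; that is, to show
\[
\chi'\bigl(\partial_w \DS_v(X_n)\bigr) = \delta_{w,v}
\]
for every $w \in \wt{W}^{(n)}$.

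For this, I would invoke the type D analogue of (\ref{Cstar}) recorded at the end of \S \ref{Dprelims}: $\partial_w \DS_v(X) = \DS_{vw^{-1}}(X)$ when $\ell(vw^{-1}) = \ell(v) - \ell(w)$, and $\partial_w \DS_v(X) = 0$ otherwise. In the first case the constant term equals $\chi'(\DS_{vw^{-1}}(X))$, which by the normalization condition in (\ref{Dddeq}) is $1$ if $vw^{-1}=1$ and $0$ otherwise; in the second case $\chi'(\partial_w \DS_v) = 0$ trivially. Combining these, $\chi'(\partial_w \DS_v(X_n)) = 1$ if $w = v$ and vanishes for all other $w \in \wt{W}^{(n)}$, which is precisely the required identity.

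There is no significant obstacle here: once Proposition \ref{basispropD} and the divided difference formulas for $\DS_w$ are in hand, the argument is a direct transcription of the type C proof. The only minor point to keep in mind is that $\partial_w$ must be interpreted as acting on $\Gamma'[X_n]$ via the $x$-divided differences (setting $Y=0$), which is consistent with the definition of $\DS_w(X) = \DS_w(X,0)$.
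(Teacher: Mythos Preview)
Your proof is correct and follows exactly the same approach as the paper: the paper simply states that ``the argument is the same as for the proof of Proposition \ref{etaprop},'' and you have written out precisely that argument, replacing Proposition \ref{basisprop} by Proposition \ref{basispropD} and the type C divided difference identity (\ref{Cstar}) by its type D analogue from \S \ref{Dprelims}.
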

\begin{proof}
The argument is the same as for the proof of Proposition \ref{etaprop}.
\end{proof}

We next define a ring $B^{(n)}$, following \cite[\S 5.2]{BKT2}.
For each integer $p\geq 1$, let
\[
{}^nb_p:=\begin{cases} 
e_p(X_n) + 2\sum_{j=0}^{p-1} b_{p-j}e_j(X_n) & \text{if $p<n$}, \\
\sum_{j=0}^p b_{p-j}e_j(X_n) & \text{if $p\geq n$}
\end{cases}
\]
and 
\[
{}^nb'_n:=\sum_{j=0}^{n-1}b_{n-j}e_j(X_n).
\]
Observe that the elements denoted by $\eta_r(x\,;y)$ and
$\eta'_k(x\,;y)$ in loc.\ cit.\ correspond to the elements ${}^nb_r$
and ${}^nb'_n$ here. Let
\[
B^{(n)}:= \Z[{}^nb_1, \ldots, {}^nb_{n-1},{}^nb_n, {}^nb'_n,
{}^nb_{n+1}, \ldots]
\]
be the ring of eta polynomials of level $n$. We have
\[
{}^nc_p=\begin{cases}
{}^nb_p & \text{if $p<n$}, \\
{}^nb_n+{}^nb'_n & \text{if $p=n$}, \\
2\cdot {}^nb_p & \text{if $p>n$}
\end{cases}
\]
and thus $\Gamma^{(n)}$ is a subring of $B^{(n)}$.

According to \cite[Thm.\ 4]{BKT2}, the single eta polynomials
${}^n\Eta_\la$ for all typed $n$-strict partitions $\la$ form a
$\Z$-basis of $B^{(n)}$. In the next result, the Weyl group $\wt{W}_n$
acts on the ring $\Gamma'[X_n]$ in the manner described in \S
\ref{Dprelims}.

\begin{prop}
\label{invlemD}
  The ring $B^{(n)}$ is equal to the subring $\Gamma'[X_n]^{\wt{W}_n}$ of
  $\wt{W}_n$-invariants in $\Gamma'[X_n]$.
\end{prop}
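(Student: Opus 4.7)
My plan is to follow the proof of Proposition \ref{invlem} verbatim, adapted to type D. Since $\wt{W}_n$ is generated by $\{s_\Box, s_1, \ldots, s_{n-1}\}$, an element $f \in \Gamma'[X_n]$ is $\wt{W}_n$-invariant iff $\partial_i f = 0$ for all $i \in \{\Box, 1, \ldots, n-1\}$.

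For the inclusion $B^{(n)} \subseteq \Gamma'[X_n]^{\wt{W}_n}$, I will check invariance of each generator. Invariance under each $s_i$ for $i \geq 1$ is automatic, since $s_i$ fixes every $b_p$ while the polynomials $e_j(X_n)$ are symmetric in $x_1,\ldots,x_n$. For $s_\Box$-invariance, I apply Lemma \ref{Dddlem2dual} with $k = n \geq 2$ and $r = 0$ to obtain $\partial_\Box({}^nc_p) = 0$ for every $p \geq 1$, so each ${}^nc_p$ is $s_\Box$-fixed. A short computation using $c_q = 2b_q$ for $q \geq 1$ yields ${}^nb_p = {}^nc_p$ for $p < n$, ${}^nb_p = \tfrac{1}{2}\,{}^nc_p$ for $p > n$, and the two crucial identities
\[
{}^nb_n = \tfrac{1}{2}\bigl({}^nc_n + e_n(X_n)\bigr), \qquad {}^nb'_n = \tfrac{1}{2}\bigl({}^nc_n - e_n(X_n)\bigr).
\]
Since $s_\Box$ sends $(x_1,x_2)$ to $(-x_2,-x_1)$ and fixes $x_3,\ldots,x_n$, a direct check gives $s_\Box(e_n(X_n)) = (-x_2)(-x_1)x_3\cdots x_n = e_n(X_n)$, so every generator of $B^{(n)}$ is $\wt{W}_n$-invariant.

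For the reverse inclusion, given $f \in \Gamma'[X_n]^{\wt{W}_n}$, use Proposition \ref{basispropD} to write
\[
f = \sum_{w\in \wt{W}^{(n)}} a_w\,\DS_w(X).
\]
Applying $\partial_i$ for each $i \in \{\Box, 1, \ldots, n-1\}$ and invoking (\ref{Dddeq}), the linear independence of the Schubert polynomials $\DS_{ws_i}(X)$ forces $a_w = 0$ whenever $\ell(ws_i) < \ell(w)$ for some such $i$. The surviving indices $w \in \wt{W}^{(n)}$ are therefore exactly the $n$-Grassmannian elements. For such a $w$ with associated typed $n$-strict partition $\mu$, the single case of the main theorem of \cite{T4} identifies $\DS_w(X) = {}^n\Eta_\mu(X)$, which lies in $B^{(n)}$; hence $f \in B^{(n)}$. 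The converse containment of the ideal-theoretic statement, $\partial_i h = 0$ for $h \in B^{(n)}$ and $i \in \{\Box,1,\ldots,n-1\}$, is precisely the content of the forward inclusion.

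The main obstacle is the forward inclusion, specifically the $s_\Box$-invariance of ${}^nb_n$ and ${}^nb'_n$ individually (their sum ${}^nc_n$ lies in $\Gamma^{(n)}$ and is handled by Lemma \ref{Dddlem2dual}, but their difference is $e_n(X_n)$, which is \emph{not} $W_n$-invariant because $s_0$ flips its sign). The proof hinges on the observation that $s_\Box$ changes the signs of $x_1$ and $x_2$ in a compensating way, so that $e_n(X_n)$, while being a type D ``spinor''-like invariant, is $\wt{W}_n$-fixed; this is the only genuinely type D input to the argument, and everything else is a routine adaptation of the type C template.
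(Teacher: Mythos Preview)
Your proof is correct and follows essentially the same approach as the paper. The paper's own proof simply declares it identical to that of Proposition~\ref{invlem}, citing \cite[Prop.\ 6.3]{BKT2} (rather than \cite{T4}) for the identification $\DS_w(X_n)={}^n\Eta_\la$; your explicit verification of the forward inclusion---in particular the $s_\Box$-invariance of $e_n(X_n)$, which is the genuinely new type~D ingredient---spells out what the paper leaves as ``the converse is clear.''
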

\begin{proof}
  The proof is identical to that of Proposition \ref{invlem}, using
  \cite[Prop.\ 6.3]{BKT2} for the fact that the Schubert polynomials
  $\DS_w(X_n)$ for those $w\in \wt{W}_\infty$ with $|w_1|<w_2<\cdots < w_n$
  and $w_{n+1}<w_{n+2}<\cdots$ coincide with the (single) eta
  polynomials of level $n\geq 2$.
\end{proof}

\begin{example}
It follows from Proposition \ref{invlemD} that we have
\[
B^{(n)}\cap \Z[X_n] = \Z[X_n]^{\wt{W}_n} = 
\Z[e_1(X^2_n),\ldots,e_{n-1}(X^2_n),e_n(X_n)].
\]
This can also be shown as in Example \ref{Cex}, using the fact that 
${}^nb_n-{}^nb_n' = e_n(X_n)$.
\end{example}

For any parabolic subgroup $P$ of $\SO_{2n}$, let
$\wt{W}_n^P:=\wt{W}^P\cap \wt{W}_n$.  Let $\IB^{(n)}$ (respectively
$\IB^{(n)}_P$) be the ideal of $\Gamma'[X_n]_\Q$ (respectively
$\Gamma'[X_n]_\Q^{\wt{W}_P}$) generated by the homogeneous elements in
$B^{(n)}$ of positive degree. We then have the following immediate
consequence of Theorem \ref{Dthm} and the discussion in \S
\ref{Dgeom}.

\begin{cor}
There is a canonical ring isomorphism 
\[
\HH^*(\SO_{2n}/B,\Q) \cong \Gamma'[X_n]_\Q/\IB^{(n)}
\]
which maps the cohomology class of the codimension $\ell(w)$ Schubert
variety $X_w$ to the class of the Schubert polynomial $\DS_w(X)$, for
any $w\in \wt{W}_n$. Moreover, for any parabolic subgroup $P$ of
$\SO_{2n}$, there is a canonical ring isomorphism
\[
\HH^*(\SO_{2n}/P,\Q) \cong \Gamma'[X_n]_\Q^{\wt{W}_P}/\IB^{(n)}_P
\]
which maps the cohomology class of the codimension $\ell(w)$ Schubert
variety $X_w$ to the class of the Schubert polynomial $\DS_w(X)$, for
any $w\in \wt{W}^P_n$.
\end{cor}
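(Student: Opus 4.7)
The plan is to derive the non-equivariant presentations from the $T$-equivariant Theorem \ref{Dthm} (for $B$) and Corollary \ref{GPcorD} (for $P$) by specializing $y_1=\cdots=y_n=0$, which geometrically corresponds to forgetting the $T$-equivariance. I would first apply Theorem \ref{Dthm} to get the $\Q[Y_n]$-algebra isomorphism $\HH^*_T(\SO_{2n}/B,\Q)\cong \Gamma'[X_n,Y_n]_\Q/\wh{\IB}^{(n)}$, then tensor with $\Q$ over $\Q[Y_n]$ via $Y_n\mapsto 0$. This yields
\[
\HH^*(\SO_{2n}/B,\Q)\cong \Gamma'[X_n]_\Q/\overline{\wh{\IB}^{(n)}},
\]
where $\overline{\wh{\IB}^{(n)}}$ is the ideal generated by the images of the defining generators of $\wh{\IB}^{(n)}$; since $h^n_j(-Y)|_{Y=0}=\delta_{j,0}$, the generator ${}^nc^n_p$ reduces to ${}^nc_p$ and $\wt{b}_n$ reduces to $b_n$, so $\overline{\wh{\IB}^{(n)}}=\langle b_n,{}^nc_p:p\geq 1\rangle$. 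The image of $\DS'_w$ under this reduction is $\DS_w(X)$, so the resulting bijection on Schubert bases is the one stated.

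The remaining task is to verify $\overline{\wh{\IB}^{(n)}}=\IB^{(n)}$. Using the formulas ${}^nc_p={}^nb_p$ for $p<n$, ${}^nc_n={}^nb_n+{}^nb'_n$, ${}^nc_p=2\cdot{}^nb_p$ for $p>n$, and ${}^nb_n-{}^nb'_n=e_n(X_n)$, one sees that over $\Q$ one has $\IB^{(n)}=\langle{}^nc_p:p\geq 1\rangle+\langle e_n(X_n)\rangle$ while $\overline{\wh{\IB}^{(n)}}=\langle{}^nc_p:p\geq 1\rangle+\langle b_n\rangle$, so it suffices to show that $b_n$ and $e_n(X_n)$ generate the same coset modulo $\langle{}^nc_p:p\geq 1\rangle$. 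I would establish this by combining the identity ${}^nc_n=2b_n+2\sum_{j=1}^{n-1}b_{n-j}e_j(X_n)+e_n(X_n)$ with the recursion $e_j\equiv-2\sum_{i=0}^{j-1}b_{j-i}e_i$ coming from ${}^nc_j\equiv 0$ for $j<n$, and the defining Schur $P$-function quadratic relations of $\Gamma'$, to reduce the sum $\sum_{j=1}^{n-1}b_{n-j}e_j(X_n)$ modulo these ideals and exhibit the required proportionality between $b_n$ and $e_n(X_n)$.

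The parabolic statement follows by the same argument starting from Corollary \ref{GPcorD} in place of Theorem \ref{Dthm}, as the reduction $Y_n\mapsto 0$ commutes with passage to $\wt{W}_P$-invariants. The principal obstacle is the ideal identification in the second paragraph: unlike in type C, where ${}^nc^n_p\mapsto{}^nc_p$ directly matches the definition of $\IGam^{(n)}$, in type D the extra generator $\wt{b}_n$ encodes the Euler-class relation on $\SO_{2n}$, and its reduction $b_n$ must be reconciled with the pair $({}^nb_n,{}^nb'_n)$ through the structural relations of the ring $\Gamma'$ together with Proposition \ref{invlemD}.
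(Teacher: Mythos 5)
Your proposal is correct and is essentially the proof the paper intends, namely specializing $Y_n\mapsto 0$ in Theorem \ref{Dthm} and Corollary \ref{GPcorD}. The one step the paper leaves implicit --- your identification $\overline{\wh{\IB}^{(n)}}=\IB^{(n)}$ --- does go through: modulo $\langle {}^nc_1,{}^nc_2,\ldots\rangle$ one has $\prod_{i=1}^n(1+x_it)\equiv\bigl(\sum_p c_pt^p\bigr)^{-1}=\sum_p c_p(-t)^p$ by the quadratic relations of $\Gamma\subset\Gamma'$, so $e_n(X_n)\equiv(-1)^n\,2b_n$ and hence $b_n$ and $e_n(X_n)$ generate the same ideal over $\Q$, as required.
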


\subsection{The ring $\Gamma'[X_n]_\Q$ as a $B^{(n)}_\Q$-module}
Let $e_p:=e_p(X_n)$ for each $p\in \Z$, and recall that $\cP_n$
denotes the set of strict partitions $\la$ with $\la_1\leq n$.

\begin{prop}
\label{freemodold}
The $\Q$-algebra $\Gamma'[X_n]_\Q$ is a free $B^{(n)}_\Q$-module of 
rank $2^{n-1}n!$ with basis
\[
\{e_\la(-X_n)x^\al\ |\ \la \in \cP_{n-1}, 
\ \ 0\leq \al_i \leq n-i, \ i\in [1,n]\}.
\]
\end{prop}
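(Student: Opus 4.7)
The plan is to mirror the proof of Proposition \ref{freemod}, with adjustments reflecting that $\wt{W}_n$ is an index-two subgroup of $W_n$ of order $2^{n-1}n!$. First, the classical fact \cite[(4.11)]{M2} that $\Q[X_n]$ is a free $\Q[X_n]^{S_n}$-module with basis $\{x^\al : 0\leq \al_i \leq n-i\}$ extends, by tensoring with $\Gamma'_\Q$, to give that $\Gamma'[X_n]_\Q$ is free over $\Gamma'[X_n]^{S_n}_\Q = \Gamma'[e_1,\ldots,e_n]_\Q$ on the same basis. The problem therefore reduces to showing that $\Gamma'[e_1,\ldots,e_n]_\Q$ is a free $B^{(n)}_\Q$-module of rank $2^{n-1}$ with basis $\{e_\la(-X_n) : \la\in\cP_{n-1}\}$.

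For the spanning step, I would reuse the identity
\[
E(X_n,t)\,E(X_n,-t) = \Bigl(\sum_{p\geq 0} {}^nc_p\, t^p\Bigr)\Bigl(\sum_{p\geq 0} {}^nc_p\,(-t)^p\Bigr)
\]
from the proof of Proposition \ref{freemod}. This holds verbatim in $\Gamma'[X_n]_\Q$: the relation (\ref{basicrels}), equivalent to $(\sum c_pt^p)(\sum c_p(-t)^p)=1$, is valid in $\Gamma'$ under the embedding $c_p=2b_p$ for $p\geq 1$, and equation (\ref{tqeh}) with $y_j=0$ transfers directly. Since $\Gamma^{(n)}\subseteq B^{(n)}$, extracting the coefficient of $t^{2p}$ yields
\[
e_p(-X_n)^2 + 2\sum_{i=1}^p (-1)^i\, e_{p+i}(-X_n)\, e_{p-i}(-X_n) \in B^{(n)}
\]
for each $p\geq 1$, so any monomial $e_\al(-X_n)$ reduces to a $B^{(n)}_\Q$-combination of $e_\la(-X_n)$ for strict $\la$ with $\la_1\leq n$. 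The new type D input is the relation ${}^nb_n - {}^nb_n' = e_n(X_n)$, which places $e_n(-X_n)\in B^{(n)}$; using it to absorb any factor with a part equal to $n$ shrinks the spanning set to $\{e_\la(-X_n) : \la\in\cP_{n-1}\}$.

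For linear independence, I would follow type C and show that $\{\DS_w(X_n) : w\in\wt{W}_n\}$ is $B^{(n)}_\Q$-linearly independent. By Proposition \ref{invlemD}, $B^{(n)} = \Gamma'[X_n]^{\wt{W}_n}$, so the Leibnitz rule (\ref{LeibR}) implies that $\partial_i$ is $B^{(n)}$-linear for every $i\in\N_\Box$ with $i\leq n-1$, and hence $\partial_v$ is $B^{(n)}_\Q$-linear for every $v\in\wt{W}_n$. Given a relation $\sum_w f_w\DS_w(X_n)=0$ with $f_w\in B^{(n)}_\Q$, choosing $v$ of maximal length with $f_v\neq 0$ and applying $\partial_v$ forces $f_v=0$, a contradiction. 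Restricting to $w=\wt{w}_\la$ with $\la\in\cP_{n-1}$, for which $\wt{w}_\la\in\wt{W}_n$ and $\DS_{\wt{w}_\la}(X_n)$ equals the Schur $P$-polynomial $P_\la\in\Gamma'\subseteq\Gamma'[e_1,\ldots,e_n]_\Q$, gives a $B^{(n)}_\Q$-linearly independent subset of cardinality $2^{n-1}=|\cP_{n-1}|$.

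Combining, $\Gamma'[e_1,\ldots,e_n]_\Q$ admits a $B^{(n)}_\Q$-spanning set and a $B^{(n)}_\Q$-linearly independent subset, both of size $2^{n-1}$, so it is free of rank $2^{n-1}$ on $\{e_\la(-X_n) : \la\in\cP_{n-1}\}$. The main obstacle beyond the type C argument is the coordinated use of the quadratic relation with the extra generator $e_n(X_n)\in B^{(n)}$, which is what reduces the index set from $\cP_n$ to $\cP_{n-1}$ and matches $|\wt{W}_n| = 2^{n-1}n!$; together with the small verification that $\wt{w}_\la\in\wt{W}_n$ precisely when $\la\in\cP_{n-1}$, regardless of the parity of $\ell(\la)$.
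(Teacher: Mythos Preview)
Your proof is correct and follows essentially the same route as the paper's, which likewise reduces to freeness of $\Gamma'[e_1,\ldots,e_n]_\Q$ over $B^{(n)}_\Q$, establishes spanning via the quadratic identity together with $e_n(X_n)={}^nb_n-{}^nb_n'\in B^{(n)}$, and deduces independence from that of the type D Schubert polynomials $\DS_w$ for $w\in\wt{W}_n$. One small point: your final inference---that a spanning set and a linearly independent set of the same finite cardinality force the spanning set to be a basis---is not automatic for modules and needs the observation that $B^{(n)}_\Q$ is an integral domain (tensor with its fraction field); the paper instead closes this step via the unitriangular change of basis linking $\{P_\la\}$, $\{b_\la\}$, and $\{e_\la(-X_n)\}$ for $\la\in\cP_{n-1}$, which transfers linear independence directly.
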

\begin{proof}
We have that $\Gamma'[X_n]$ is a free
$\Gamma'[e_1,\ldots,e_n]$-module with basis given by the monomials
$x^\al$ with $0\leq \al_i \leq n-i$ for  $i\in [1,n]$. It will therefore 
suffice to show that $\Gamma'[e_1,\ldots,e_n]_\Q$ is a free $B^{(n)}_\Q$-module
with basis $e_\la(-X_n)$ for $\la\in \cP_{n-1}$.

As in the proof of Proposition \ref{freemod}, we see that the monomials 
$e_\la(-X_n)$ for
$\la\in\cP_n$ generate $\Gamma'[e_1,\ldots,e_n]_\Q$ as a 
$B^{(n)}_\Q$-module. Furthermore, since $e_n={}^nb_n-{}^nb_n'$, it follows 
that the monomials $e_\la(-X_n)$ for $\la\in \cP_{n-1}$ also generate 
this module. The rest of the argument is the same as in type C.
\end{proof}

For any strict partition $\la$,
define the $\wt{P}$-polynomial of \cite{PR} by
\[
\wt{P}_\la(X_n):=2^{-\ell(\la)}\,\wt{Q}_\la(X_n).
\]

\begin{cor}
The ring $\Gamma'[X_n]$ is a free $\Gamma'[X_n]^{S_n}$-module with
basis $\{\AS_\om(X)\}$ for $\om\in S_n$. The $\Q$-algebra
$\Gamma'[X_n]^{S_n}_\Q$ is a free $B^{(n)}_\Q$-module with basis
$\{P_\la(-X_n)\}$ for $\la\in\cP_{n-1}$.  The $\Q$-algebra
$\Gamma'[X_n]_\Q$ is a free $B^{(n)}_\Q$-module on the basis
$\{\DS_w(X_n)\}$ of single type D Schubert polynomials for $w\in
\wt{W}_n$, and is also free on the product basis
$\{P_\la(-X_n)\AS_\om(X)\}$ for $\la\in\cP_{n-1}$ and $\om\in S_n$.
\end{cor}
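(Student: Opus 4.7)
The plan is to parallel the proof of Corollary \ref{basiscor} in the type C case, with Proposition \ref{freemodold} playing the role of Proposition \ref{freemod}. For the first assertion, observe that each $b_p\in\Gamma'$ is $S_n$-fixed, so $\Gamma'[X_n]^{S_n}=\Gamma'[e_1,\ldots,e_n]$. Tensoring the classical result \cite[(4.11)]{M2} — which says $\Z[X_n]$ is a free $\Z[X_n]^{S_n}$-module on $\{\AS_\om(X)\}_{\om\in S_n}$ — with $\Gamma'$ over $\Z$, combined with Proposition \ref{freemodold} (to know $\Gamma'[X_n]$ is finitely generated and free in the appropriate sense), yields the first statement.

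For the second assertion, Proposition \ref{freemodold} gives that $\Gamma'[e_1,\ldots,e_n]_\Q = \Gamma'[X_n]^{S_n}_\Q$ is a free $B^{(n)}_\Q$-module with basis $\{e_\la(-X_n)\}_{\la\in\cP_{n-1}}$. The definition $\wt{P}_\la(-X_n)=2^{-\ell(\la)}R^\infty e_\la(-X_n)$ expresses each $\wt{P}_\la(-X_n)$ as $2^{-\ell(\la)}$ times a unitriangular $\Z$-linear combination of $\{e_\mu(-X_n)\}$ indexed by partitions $\mu\in\cP_{n-1}$ with $\mu\succeq\la$ in dominance order. Since each scalar $2^{-\ell(\la)}$ is a unit in $\Q$, the change-of-basis matrix is invertible, so $\{\wt{P}_\la(-X_n)\}_{\la\in\cP_{n-1}}$ is also a $B^{(n)}_\Q$-basis. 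Combining the first two statements, the product set $\{\wt{P}_\la(-X_n)\AS_\om(X)\ |\ \la\in\cP_{n-1},\ \om\in S_n\}$ is a $B^{(n)}_\Q$-basis of $\Gamma'[X_n]_\Q$, of cardinality $2^{n-1}n! = |\wt{W}_n|$, matching the rank given by Proposition \ref{freemodold}.

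For the Schubert basis statement, linear independence of $\{\DS_w(X_n)\}_{w\in\wt{W}_n}$ over $B^{(n)}_\Q$ follows by the same argument used in the proof of Proposition \ref{freemod}: if $\sum_w f_w\DS_w(X_n)=0$ with coefficients $f_w\in B^{(n)}_\Q$, choose $v$ of maximal length with $f_v\neq 0$ and apply $\partial_v$. By Proposition \ref{invlemD}, $\partial_i$ annihilates $B^{(n)}_\Q$ for every $i\in\N_\Box$ with $i<n$, so $\partial_v$ is $B^{(n)}_\Q$-linear, giving $f_v=\partial_v(0)=0$, a contradiction. The main (small) obstacle is upgrading linear independence to a basis: $\Gamma'[X_n]_\Q$ is a free $B^{(n)}_\Q$-module of rank $|\wt{W}_n|$, so the $|\wt{W}_n|$ linearly independent homogeneous elements $\DS_w(X_n)$ form a basis provided their images in the quotient $\Gamma'[X_n]_\Q/\IB^{(n)}\cong\HH^*(\SO_{2n}/B,\Q)$ span the quotient as a $\Q$-vector space. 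But by the preceding Corollary, those images are exactly the Schubert classes $[X_w]$, $w\in\wt{W}_n$, which form a $\Q$-basis of $\HH^*(\SO_{2n}/B,\Q)$; a standard graded Nakayama argument (using that $B^{(n)}_\Q$ is connected and positively graded) then promotes spanning modulo $\IB^{(n)}$ to spanning over $B^{(n)}_\Q$, completing the proof.
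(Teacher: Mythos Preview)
Your proof is correct and follows essentially the same route as the paper, which (as in the type C Corollary~\ref{basiscor}) derives the result directly from Proposition~\ref{freemodold}, the identification $\Gamma'[X_n]^{S_n}=\Gamma'[e_1,\ldots,e_n]$, Macdonald's \cite[(4.11)]{M2}, and the unitriangular relation between the $\wt{P}_\la(-X_n)$ and the $e_\la(-X_n)$. Your graded Nakayama argument for the Schubert basis claim is a legitimate way to flesh out what the paper's type C proof abbreviates as ``is also clear''; the parenthetical appeal to Proposition~\ref{freemodold} in your first paragraph is unnecessary (base change of \cite[(4.11)]{M2} along $\Z\hookrightarrow\Gamma'$ suffices), but harmless.
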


\subsection{A scalar product on $\Gamma'[X_n]$}
Let $\wt{w}_0$ be the element of longest length in $\wt{W}_n$.  If
$f\in \Gamma'[X_n]$, then $\partial_i(\partial_{\wt{w}_0}f)= 0$ for
all $i$ with $\Box\leq i \leq n-1$. Hence Proposition \ref{invlemD}
implies that $\partial_{\wt{w}_0}(f)\in B^{(n)}$, for each $f\in
\Gamma'[X_n]$.

\begin{defn}
We define a scalar product $\langle\ ,\, \rangle$
on $\Gamma'[X_n]$, with values in $B^{(n)}$,
by the rule 
\[
\langle f,g\rangle := \partial_{\wt{w}_0}(fg), \ \ \ f,g\in \Gamma'[X_n].
\]
\end{defn}

\begin{prop}
\label{2propertiesD}
{\em (a)} The scalar product $\langle\ ,\, \rangle: \Gamma'[X_n]\times \Gamma'[X_n]
\to B^{(n)}$ is $B^{(n)}$-linear. For any $f,g\in \Gamma'[X_n]$ and 
$w\in \wt{W}_n$, we have
\[
\langle\partial_wf,g \rangle  = \langle f,\partial_{w^{-1}}g\rangle.
\]
\medskip \noin {\em (b)} Let $u,v\in \wt{W}_n$ be such that
$\ell(u)+\ell(v)=n^2-n$. Then we have
\[
\langle\DS_u(X_n),\DS_v(X_n)\rangle = 
\begin{cases}
1 & \text{if $v=\wt{w}_0u$}, \\
0 & \text{otherwise}.
\end{cases}
\]
\end{prop}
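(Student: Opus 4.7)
The plan is to mirror the type C arguments from Proposition \ref{2properties} and Proposition \ref{prodprop}, relying on Proposition \ref{invlemD} in place of Proposition \ref{invlem}. The Leibnitz rule (\ref{LeibR}) and the identity $\partial_i^2=0$ hold for every $i\in\N_\Box$ (including $i=\Box$, which one checks directly from the definition of $\partial_\Box$), and this is all we need.

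For part (a), $B^{(n)}$-linearity is automatic: by Proposition \ref{invlemD} every $b\in B^{(n)}$ is $\wt{W}_n$-invariant, so $s_i b=b$ and Leibnitz gives $\partial_i(bf)=b\,\partial_i f$ for all $i\in\N_\Box$ with $i<n$; composing reduced expressions shows $\partial_{\wt{w}_0}$ is $B^{(n)}$-linear. For the adjointness $\langle\partial_w f,g\rangle=\langle f,\partial_{w^{-1}}g\rangle$, by induction on $\ell(w)$ it suffices to treat $w=s_i$ for $\Box\leq i\leq n-1$. Since $\ell(\wt{w}_0 s_i)=\ell(\wt{w}_0)-1$ we may write $\partial_{\wt{w}_0}=\partial_{\wt{w}_0 s_i}\circ\partial_i$; using Leibnitz together with $\partial_i^2=0$ and $s_i(\partial_i f)=\partial_i f$ gives
\[
\langle\partial_i f,g\rangle=\partial_{\wt{w}_0 s_i}\bigl((\partial_i f)(\partial_i g)\bigr),
\]
which is symmetric in $f$ and $g$ and therefore equals $\langle f,\partial_i g\rangle$.

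For part (b), apply (\ref{Dddeq}) with $\DS_{\wt{w}_0}$ to see that for every $u\in\wt{W}_n$, $\ell(u^{-1}\wt{w}_0)=\ell(\wt{w}_0)-\ell(u)$, hence $\DS_u=\partial_{u^{-1}\wt{w}_0}\DS_{\wt{w}_0}$. Using part (a),
\[
\langle\DS_u(X_n),\DS_v(X_n)\rangle
= \langle \DS_{\wt{w}_0}(X_n),\partial_{\wt{w}_0 u}\DS_v(X_n)\rangle.
\]
Under the hypothesis $\ell(u)+\ell(v)=n^2-n=\ell(\wt{w}_0)$ one has $\ell(\wt{w}_0 u)=\ell(v)$, so by the type D analogue of (\ref{Cstar}) the inner operator $\partial_{\wt{w}_0 u}\DS_v$ equals $1$ precisely when $v=\wt{w}_0 u$ and vanishes otherwise. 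The proof is then concluded by the normalization $\langle \DS_{\wt{w}_0}(X_n),1\rangle=\partial_{\wt{w}_0}\DS_{\wt{w}_0}(X_n)=\DS_1=1$.

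The only place one might worry is the behavior at $i=\Box$, since $\partial_\Box$ is defined differently from the $\partial_i$ for $i\geq 1$; but a direct check from its definition yields both $\partial_\Box^2=0$ and the fact that $s_\Box$ fixes $\partial_\Box f$, so every step above applies uniformly over $\N_\Box$. This is the one spot that warrants care, and it is essentially routine.
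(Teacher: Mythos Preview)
Your argument is correct and follows exactly the route the paper intends: the paper's proof simply says the argument is identical to that of Propositions \ref{2properties} and \ref{prodprop}, and you have faithfully reproduced those arguments in the type D setting, with the same decomposition $\partial_{\wt w_0}=\partial_{\wt w_0 s_i}\partial_i$, the same symmetry trick $(\partial_i f)(\partial_i g)$, and the same use of (\ref{Dddeq})/(\ref{Cstar}) for part (b). The extra paragraph you add checking $\partial_\Box^2=0$ and $s_\Box(\partial_\Box f)=\partial_\Box f$ is a welcome clarification the paper omits, but it does not constitute a different approach.
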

\begin{proof}
The argument is identical to the proofs of Propositions \ref{2properties}
and \ref{prodprop}.
\end{proof}

Let $\om_0$ denote the longest permutation 
in $S_n$, and define $\wt{v}_0:=\wt{w}_0\om_0$.  We define a
$\Gamma'[X_n]^{S_n}$-valued scalar product $(\ ,\, )$ on
$\Gamma'[X_n]$ by the rule
\[
(f,g) := \partial_{\om_0}(fg), \ \ \ f,g\in \Gamma'[X_n],
\]
and a $B^{(n)}$-valued scalar product $\{\ ,\, \}$
on $\Gamma'[X_n]^{S_n}$ by the rule 
\[
\{f,g\} := \partial_{\wt{v}_0}(fg), \ \ \ f,g\in \Gamma'[X_n]^{S_n}.
\]
Following \cite[Thm.\ 5.23]{PR}, for any two partitions $\la,\mu\in \cP_{n-1}$, 
we have 
\[
\left\{\wt{P}_\la(-X_n),\wt{P}_{\delta_{n-1}\ssm \mu}(-X_n)\right\} = \delta_{\la,\mu},
\]
where $\delta_{n-1}\ssm \mu$ is the strict partition whose parts complement
the parts of $\mu$ in the set $\{n-1,n-2,\ldots,1\}$

Observe that the scalar product $(\ ,\, )$ is
$\Gamma'[X_n]^{S_n}$-linear and $\{\ ,\, \}$ is
$B^{(n)}$-linear. Since $\partial_{\wt{w}_0} =
\partial_{\wt{v}_0}\partial_{\om_0}$, we deduce that
\[
\langle f,g \rangle = \{(f,g)\}, \ \ \ \text{for any $f,g\in\Gamma[X_n]$}.
\]
Furthermore, according to \cite[(2.20)]{LP2}, the orthogonality 
relation
\[
\left\langle\wt{P}_\la(-X_n)\AS_u(X), \wt{P}_{\delta_{n-1}\ssm \mu}(-X_n)
(\om_0\AS_{u'\om_0}(-X)) \right\rangle =\delta_{u,u'}\delta_{\la,\mu}
\]
holds, for any $u,u'\in S_n$ and $\la,\mu\in \cP_{n-1}$. We have therefore
identified the dual $B^{(n)}_\Q$-basis of the product basis
$\{\wt{P}_\la(-X_n)\AS_u(X)\}$ of $\Gamma'[X_n]_\Q$, relative to the
scalar product $\langle\ ,\, \rangle$.

\medskip
\medskip
\noindent
{\em Acknowledgement.} I thank Sara Billey, Tom Haines, and Andrew
Kresch for their helpful comments, and the anonymous referee for a
careful reading of the paper and suggestions with helped to clarify
the exposition.

\end{document}